\newcommand{\Si}{\Sigma}
\newcommand{\Ga}{\Gamma}
\newcommand{\Up}{\Upsilon}
\newcommand{\ep}{\epsilon}
\newcommand{\la}{\lambda}
\newcommand{\si}{\sigma}
\newcommand{\bC}{\mathbb{C}}
\newcommand{\bE}{\mathbb{E}}
\newcommand{\bL}{\mathbb{L}}
\newcommand{\bP}{\mathbb{P}}
\newcommand{\bQ}{\mathbb{Q}}
\newcommand{\bR}{\mathbb{R}}
\newcommand{\bZ}{\mathbb{Z}}
\newcommand{\cB}{\mathcal{B}}
\newcommand{\cC}{\mathcal{C}}
\newcommand{\cD}{\mathcal{D}}
\newcommand{\cE}{\mathcal{E}}
\newcommand{\cF}{\mathcal{F}}
\newcommand{\cI}{\mathcal{I}}
\newcommand{\cK}{\mathcal{K}}
\newcommand{\cIX}{\mathcal{IX}} 
\newcommand{\cL}{\mathcal{L}}
\newcommand{\cM}{\mathcal{M}}
\newcommand{\cO}{\mathcal{O}}
\newcommand{\cT}{\mathcal{T}}
\newcommand{\cU}{\mathcal{U}}
\newcommand{\cV}{\mathcal{V}}
\newcommand{\cX}{\mathcal{X}}
\newcommand{\age}{\mathrm{age}}
\newcommand{\Aut}{\mathrm{Aut}}
\newcommand{\ch}{\mathrm{ch}}
\newcommand{\Conj}{\mathrm{Conj}}
\newcommand{\Cone}{\mathrm{Cone}}
\newcommand{\ev}{\mathrm{ev}}
\newcommand{\Ext}{\mathrm{Ext}}
\newcommand{\Hom}{\mathrm{Hom}}
\newcommand{\id}{\mathrm{id}}
\newcommand{\Ker}{\mathrm{Ker}}
\newcommand{\Ntor}{N_{\mathrm{tor}} }
\newcommand{\Ob}{\mathrm{Ob}}
\newcommand{\orb}{\mathrm{orb}}
\newcommand{\pr}{\mathrm{pr}}
\newcommand{\pt}{\mathrm{pt}}
\newcommand{\rank}{\mathrm{rank}}
\newcommand{\rig}{\mathrm{rig}}
\newcommand{\Spec}{\mathrm{Spec}}
\newcommand{\td}{\mathrm{td}}
\newcommand{\tw}{\mathrm{tw}}
\newcommand{\val}{\mathrm{val}}
\newcommand{\vir}{ \mathrm{vir} }
\newcommand{\bSi}{ { \mathbf{\Si} }  }
\newcommand{\be}{ {\mathbf{e}} }
\newcommand{\bh}{ {\mathbf{h}} }
\newcommand{\uu}{ {\mathbf{u}} }
\newcommand{\bv}{ {\mathbf{v}} }
\newcommand{\bw}{ {\mathbf{w}} }
\newcommand{\bx}{ {\mathbf{x}} }
\newcommand{\bfV}{\mathbf{V}}
\newcommand{\bfO}{\mathbf{O}}
\newcommand{\fl}{\mathfrak{l}}
\newcommand{\fp}{\mathfrak{p}}
\newcommand{\fx}{\mathfrak{x}}
\newcommand{\fy}{\mathfrak{y}}
\newcommand{\vGa}{ {\vec{\Ga }} }
\newcommand{\vmu}{ {\vec{\mu }} }
\newcommand{\vc}{\vec{c}}
\newcommand{\vd}{\vec{d}}
\newcommand{\vf}{\vec{f}}
\newcommand{\vg}{\vec{g}}
\newcommand{\vi}{ {\vec{i}} }
\newcommand{\vk}{\vec{k}}
\newcommand{\vs}{\vec{s}}
\newcommand{\tsi}{ {\widetilde{\si}} }
\newcommand{\tSi}{ {\widetilde{\Si}} }
\newcommand{\tchi}{ {\widetilde{\chi} }}
\newcommand{\tM}{\widetilde{M}}
\newcommand{\tN}{\widetilde{N}}
\newcommand{\tT}{ {\widetilde{T}} }
\newcommand{\tcC}{ {\widetilde{\cC}} }
\newcommand{\tb }{\widetilde{b}}
\newcommand{\te }{\widetilde{e}}
\newcommand{\tf }{\widetilde{f}}
\newcommand{\tit}{\widetilde{t}}
\newcommand{\tm }{\widetilde{m}}
\newcommand{\tp }{\widetilde{p}}
\newcommand{\tu }{\widetilde{u}}
\newcommand{\baN}{\bar{N}}
\newcommand{\bab}{\bar{b}}
\newcommand{\bIX}{\overline{\cI}\cX}
\newcommand{\hG}{\widehat{G}}
\newcommand{\Mbar}{\overline{\cM}}
\newcommand{\MgX}{\Mbar_{g,n}(X,\beta)}
\newcommand{\GgX}{G_{g,n}(X,\beta)}
\newcommand{\MgcX}{\Mbar_{g,n}(\cX,\beta)}
\newcommand{\MgXi}{\Mbar_{g,\vi}(\cX,\beta)}
\newcommand{\GgXi}{G_{g,\vi}(\cX,\beta)}
\newcommand{\MBG}{\Mbar_{g,n}(\cB G)}
\newcommand{\MBGc}{\Mbar_{g,\vc}(\cB G)}
\newcommand{\edge}{ {\tiny\begin{array}{c}e\in E(\Ga) \\(e,v),(e,v')\in F(\Ga)\end{array}} }
\newcommand{\vone}{ {v\in V^1(\Ga),(e,v)\in F(\Ga)}  }
\newcommand{\half}{ {\frac{1}{2}} }
\newcommand{\onerd}{ {\frac{1}{3}} }
\newcommand{\tword}{ {\frac{2}{3}} }
\newcommand{\fourd}{ {\frac{4}{3}} }
\newcommand{\lra}{\longrightarrow}
\newtheorem{theorem}[equation]{Theorem}
\newtheorem{example}[equation]{Example}
\newtheorem{definition}[equation]{Definition}
\newtheorem{remark}[equation]{Remark}
\newtheorem{lemma}[equation]{Lemma}
\newtheorem{corollary}[equation]{Corollary}
\newtheorem{assumption}[equation]{Assumption}
\begin{document}
\setcounter{page}{1}
%
%
\long\def\replace#1{#1}

\title[Localization in Gromov-Witten Theory]{Localization in Gromov-Witten Theory
and Orbifold Gromov-Witten Theory}
    
\author{Chiu-Chu Melissa Liu}
\address{Chiu-Chu Melissa Liu, Department of Mathematics, Columbia University,
2990 Broadway, New York, NY 10027}
\email{ccliu@math.columbia.edu}

\subjclass[2000]{Primary 14N35; Secondary 14H10}
\keywords{Gromov-Witten invariants, toric varieties, virtual localization, orbifold Gromov-Witten invariants, toric DM stacks}

\begin{abstract}
In this expository article, we explain how to use localization to compute Gromov-Witten invariants of 
smooth toric varieties and orbifold Gromov-Witten invariants of smooth toric Deligne-Mumford stacks.
\end{abstract}  

\maketitle

\tableofcontents

\section{Introduction}

Let $X$ be a smooth projective variety over $\bC$. Naively, 
Gromov-Witten invariants count parametrized algebraic curves of $X$;
more precisely, they are intersection numbers on moduli spaces
of stable maps to $X$.
Let $\Mbar_{g,n}(X,\beta)$ be the Kontsevich's moduli space of $n$-pointed, genus
$g$, degree $\beta$ stable maps $f:(C,x_1,\ldots, x_n)\to X$, where $\beta =f_*[C] \in H_2(X;\bZ)$. 
It is a proper Deligne-Mumford stack with a perfect obstruction theory of virtual dimension 
\begin{equation}\label{eqn:intro-dvir}
d^\vir= \int_\beta c_1(T_X) + (\dim X-3)(1-g) +n,
\end{equation}
where $\int$ stands for the pairing between the (rational) homology and cohomology.
Given $i\in \{1,\ldots, n\}$, there is an evaluation map $\ev_i: \Mbar_{g,n}(X,\beta)\to X$ which sends
a moduli point $[f:(C,x_1,\ldots, x_n)\to X ]\in \Mbar_{g,n}(X,\beta)$
to $f(x_i)\in X$, and there is a line bundle $\bL_i$ over $\Mbar_{g,n}(X,\beta)$ whose
fiber at the moduli point $[f:(C,x_1,\ldots, x_n) \to X]$ is the cotangent line
$T^*_{x_i}C$ at the $i$-th marked point $x_i$.  Gromov-Witten invariants of $X$ are  defined to be
\begin{equation}
\langle \tau_{a_1}(\gamma_1), \ldots, \tau_{a_n} (\gamma_n)\rangle^X_{g,\beta}
:= \int_{[\Mbar_{g,n}(X,\beta)]^\vir} \prod_{i=1}^n (\ev_i^*\gamma_i \psi_i^{a_i} ) \in \bQ
\end{equation}
where  $\gamma_1,\ldots, \gamma_n\in H^*(X;\bQ)$,  $\psi_i = c_1(\bL_i) \in H^2(\Mbar_{g,n}(X,\beta);\bQ)$, 
and
$$
[\Mbar_{g,n}(X,\beta)]^\vir \in H_{2 d^\vir}(X;\bQ)
$$ 
is the virtual fundamental class (Li-Tian \cite{LiTi1}, Behrend-Fantechi \cite{BeFa}).

When $X$ is a toric variety, the torus action on $X$ induces torus
actions  on moduli spaces of stable maps to $X$. 
By virtual localization (Graber-Pandharipande \cite{GrPa}, see also Behrend \cite{Be2} and
Kresch \cite{Kr}),   
\begin{equation}\label{eqn:intro-virtual}
\int_{[\Mbar_{g,n}(X,\beta)]^\vir} \prod_{i=1}^n (\ev_i^*\gamma_i \psi_i^{a_i} ) 
= \sum_{F} \int_{[F]^\vir} \frac{i_F^*  \prod_{i=1}^n (\ev_i^*\gamma^T_i  (\psi^T_i)^{a_i} )}{e_T(N^\vir_F)}, 
\end{equation}
where 
\begin{itemize}
\item $T$ is the torus acting on $X$ and on $\Mbar_{g,n}(X,\beta)$, 
\item  the sum on the right hand side of \eqref{eqn:intro-virtual}
is over connected components of the set of $T$-fixed points  
in $\Mbar_{g,n}(X,\beta)$, 
\item $\gamma_i^T \in H_T^*(X;\bQ)$ is a $T$-equivariant lift of $\gamma_i$,
\item $\psi^T_i \in H_T^2(\Mbar_{g,n}(X,\beta);\bQ)$ is a $T$-equivariant lift of $\psi_i$.
\item $i_F^*: H_T^*(\Mbar_{g,n}(X,\beta);\bQ) \to H_T^*(F;\bQ)$ is induced by
the inclusion map $i_F: F\to \Mbar_{g,n}(X,\beta)$, 
\item $e_T(N^\vir_F)$ is the $T$-equivariant
Euler class of the virtual normal bundle $N^\vir_F$ of $F$ in $\Mbar_{g,n}(X,\beta)$.
\end{itemize} 
Up to a finite morphism, each connected component $F$ is a product of moduli spaces
of stable curves (with marked points).  $F$ is a proper smooth DM stack, 
and $[F]^\vir$ is the usual fundamental class $[F]\in H_*(F;\bQ)$. 
The right hand side of \eqref{eqn:intro-virtual} can be expressed in terms of Hodge 
integrals, which are intersection numbers on moduli spaces of stable curves. 
The terminology ``virtual localization'' was introduced in \cite{GrPa} and the term ``Hodge integral'' 
was introduced in \cite{FaPa} precisely to study the virtual localization formula in \cite{GrPa}.
Algorithms of computing Hodge integrals are known; a brief review of the relevant results will be 
given in Section \ref{sec:hodge}.  
This gives an algorithm of evaluating Gromov-Witten invariants for
any smooth projective toric varieties, in all genera and all degrees.
Indeed, this algorithm was first described by Kontsevich for
genus zero Gromov-Witten invariants of $\bP^r$ in 1994 \cite{Ko2}, before 
the construction of virtual fundamental class and the proof of virtual 
localization. The moduli spaces $\Mbar_{0,n}(\bP^r,d)$ of genus zero stable maps 
to $\bP^r$ are proper {\em smooth} DM stacks, so there exists a fundamental class 
$[\Mbar_{0,n}(\bP^r,d)]\in H_*(\Mbar_{0,n}(\bP^r,d);\bQ)$, 
and one may apply the classical Atiyah-Bott localization formula \cite{AtBo}
in this case. H. Spielberg derived a formula of genus 0 Gromov-Witten 
invariants of smooth toric varieties in his thesis \cite{Sp}.

For a noncompact smooth toric variety $X$, Gromov-Witten invariants
are usually not defined, but one may use the right hand side 
of \eqref{eqn:intro-virtual} to define $T$-equivariant Gromov-Witten invariants 
of $X$. They are elements in the fractional field of $H^*(BT;\bQ)$, the rational
equivariant cohomology ring of the classifying space $BT$ of $T$. 

Chen-Ruan developed Gromov-Witten theory for symplectic orbifolds \cite{CR2}.  
The algebraic counterpart, the orbifold Gromov-Witten theory for
smooth Deligne-Mumford (DM) stacks, was developed by 
Abramovich-Graber-Vistoli \cite{AGV1, AGV2}. 
Orbifold Gromov-Witten invariants of a smooth DM stack $\cX$ are defined 
as intersection numbers on moduli spaces of twisted stable maps to $\cX$.  
When $\cX$ is a smooth toric DM stack, the torus action
on $\cX$ induces torus actions on moduli spaces of
twisted stable maps to $\cX$. By virtual localization, 
orbifold Gromov-Witten invariants of a smooth toric DM stack 
can be expressed in terms of Hurwitz-Hodge integrals, 
which are intersections numbers of moduli spaces
of twisted stable maps to $\cB G =[\pt/G]$, the classifying
space of a finite group $G$.  Algorithms for computing Hurwitz-Hodge 
integrals are known; a brief review of the relevant results will be 
given in Section \ref{sec:hurwitz-hodge}. 

The goal of this article is to provide details of the localization calculations described above.
In Section \ref{sec:intersection},  we review equivariant intersection theory and localization.
In Section \ref{sec:GWreview}, we give a brief review of Gromov-Witten theory. 
In Section \ref{sec:toric-varieties}, we give a brief review of smooth toric varieties, and introduce
toric graphs. In Section \ref{sec:toricGW}, we use virtual localization
to derive a formula for Gromov-Witten invariants
of smooth toric varieties in terms of Hodge integrals. 
Most of Section \ref{sec:toricGW} is straightforward generalization of the
 $\bP^r$ case discussed in \cite{Ko2} (genus 0) and 
\cite[Section 4]{GrPa}, \cite[Section 4]{Be2} (higher genus); 
see also \cite[Chapter 27]{HKKPTVVZ}. 
Smooth DM stacks, orbifold Gromov-Witten theory, and smooth toric DM stacks
are reviewed in Section \ref{sec:DMstacks}, Section \ref{sec:orbGWreview},
and Section \ref{sec:toric-stacks}, respectively.
In Section \ref{sec:toricGW-orb}, we use virtual localization
to derive a formula of orbifold Gromov-Witten invariants
of smooth toric DM stacks in terms of abelian Hurwitz-Hodge integrals.
Our main reference of Section \ref{sec:toricGW-orb} is P. Johnson's thesis \cite{Jo},
which contains detailed localization computations for 1-dimensional toric
DM stacks. D. Ross's recent preprint \cite{Ro} contains localization computations 
for 3-dimensional Calabi-Yau toric DM stacks.   

\subsection*{Acknowledgments}
I wish to thank Dan Abramovich, Lev Borisov, Dan Edidin, Ezra Getzler, Tom Graber, Paul Johnson,
Etienne Mann, Zhengyu Zong for helpful communications, and the referee for his or her comments.
Special thanks go to Tom Graber and Paul Johnson for their help with
orbifold Gromov-Witten theory and virtual localization.

\section{Equivariant Intersection Theory and Localization} 
\label{sec:intersection}

In this section, we review equivariant intersection
theory and localization. In Section \ref{sec:HG} -- Section \ref{sec:RR}
we discuss equivariant cohomology of topological spaces
and localization on smooth manifolds. In Section \ref{sec:AG} we give
a brief summary of equivariant intersection theory
on schemes and Deligne-Mumford stacks in terms of equivariant
Chow groups and equivariant operational Chow cohomology groups
\cite{EdGr1}. We state the virtual localization formula in Section \ref{sec:VL}.

In this paper, we consider cohomology groups, Chow groups,
and operational Chow groups with rational coefficients.
We write $H^*(\bullet)$, $A_*(\bullet)$, $A^*(\bullet)$
instead of $H^*(\bullet;\bQ)$, $A_*(\bullet;\bQ)$, $A^*(\bullet;\bQ)$. 

\subsection{Equivariant cohomology}\label{sec:HG}
Let $G$ be a Lie group, and let $EG$ be a contractible topological
space on which $G$ acts freely on the {\em right}.
The quotient $BG= EG/G$ is a classifying 
space of principal $G$-bundles, and  the natural projection
$EG \to BG$ is a universal principal $G$-bundle;
$EG$ and $BG$ are defined up to homotopy equivalences.

A $G$-space is a topological space together with
a continuous {\em left} $G$-action. Given a $G$-space $X$,
define a {\em right} $G$-action on $EG\times X$ by 
\begin{equation}\label{eqn:free}
(p,x) \cdot g = (p\cdot  g, g^{-1} \cdot x).
\end{equation}
The {\em homotopy orbit space} $X_G$ is defined
to be the quotient of $EG\times X$ by the  {\em free}
$G$-action \eqref{eqn:free}. 
The $G$-equivariant  cohomology of
$X$ is defined to be the ordinary cohomology
of the homotopy orbit space $X_G$:
$$
H_G^*(X):= H^*(X_G).
$$
In particular, the $G$-equivariant cohomology
of a point $\pt$ is the ordinary cohomology of
the classifying space $BG$:
$$
H_G^*(\pt)=H^*(BG)
$$

\begin{example}[Classifying space of $\bC^*$-bundles]
The Lie group $\bC^*$ acts on $\bC^\infty -\{0\}$ on the right by
$$
v\cdot \lambda  = \lambda  v,\quad \lambda\in \bC^*,\quad v\in \bC^\infty-\{0\}.
$$
$\bC^\infty-\{0\}$ is contractible, and the $\bC^*$-action on $\bC^\infty-\{0\}$ is free.
Therefore (up to homotopy equivalence)
$$
E\bC^*=\bC^\infty-\{0\},\quad
B\bC^*=(\bC^\infty-\{0\})/\bC^*= \bP^\infty ,
$$
where $\bP^\infty$ is the infinite dimensional complex projective space. 
Let $\cO_{\bP^\infty}(-1)$ be the tautological line bundle over
$\bP^\infty$, and let $u$ be the first Chern class of $\cO_{\bP^\infty}(-1)$:
$$
u:= c_1(\cO_{\bP^\infty}(-1))\in H^2(\bP^\infty) = H^2_{\bC^*}(\pt).
$$
Then $H^*_{\bC^*}(\pt)= H^*(B\bC^*)= \bQ[u]$.
\end{example}

In this paper we will consider action by an algebraic torus
$T=(\bC^*)^l$.  Let $\pi_i: BT= (B\bC^*)^l\to  B\bC^*$ be the projection to the $i$-th
factor, and let $u_i=\pi_i^*u\in H^2(BT)$. Then
$$
H^*_T(\pt)=H^*(BT)= \bQ[u_1,\ldots,u_l].
$$

\begin{example}\label{Pr}
Let $\tT=(\bC^*)^{r+1}$ act on the $r$-dimensional 
complex projective space $\bP^r$ by 
$$
(\tilde{t}_0,\ldots,\tilde{t}_r) \cdot[z_0,\ldots, z_r]
= [\tilde{t}_0 z_0,\ldots, \tilde{t}_r z_r],\quad
(\tilde{t}_0,\ldots,\tilde{t}_r)\in \tT,\quad [z_0,\ldots, z_r]\in \bP^r.
$$
For $i=0,\ldots,r$, let $p_i:B\tT=(B\bC^*)^{r+1}\to B\bC^*$
be the projection to the $i$-th factor.
Then $\bP^r_\tT$ can be identified with the total space of the $\bP^r$-bundle 
$$
\bP\bigl(\oplus_{i=0}^r p_i^*(\cO_{\bP^\infty}(-1))\bigr) \to BT.
$$

In general, let $E\to X$ be a rank $(r+1)$ complex vector bundle over a 
topological space $X$, and let $\pi: \bP(E)\to  X$ be 
the projectivization of $E$, which is an $\bP^r$-bundle over $X$.
Then the cohomology $H^*(\bP(E))$ of the
total space $\bP(E)$ is an $H^*(X)$-algebra
generated by $H$ with a single relation
$$
H^{r+1} + c_1(E)H^r +\cdots +  c_{r+1}(E) =0,
$$
where $c_i(E)$ is the $i$-th Chern class of $E$, and $H$ is of degree 2.   

In our case $E = \oplus_{i=0}^r p_i^*\cO_{\bP^\infty}(-1)$, so 
the total Chern class of $E$ is given by
$$
c(E) = \prod_{i=0}^r (1+ \tu_i),\quad \tu_i=p_i^*(c_1(\cO_{\bP^\infty}(-1)).
$$
We have
$$
H^*_{\tT}(\bP^r)= 
H^*(\bP^r_{\tT}) \cong \bQ[H,\tu_0,\ldots, \tu_r]/\langle 
\prod_{i=0}^r (H +\tu_i) \rangle,
$$
where $\bQ[H,\tu_0,\ldots,\tu_r]$ is the ring of polynomials
in $H, \tu_0,\ldots,\tu_r$ with coefficients in $\bQ$, 
and $\langle \prod_{i=0}^r (H+\tu_i)\rangle$ is 
the principal ideal generated by $\prod_{i=0}^r (H+\tu_i)$.
\end{example}

The trivial fiber bundle $EG\times X\to EG$ with base $EG$, fiber
$X$ descends to a fiber bundle  $X_G\to BG$ with base $BG$,
fiber $X$. The inclusion of a fiber,  $i_X: X\to X_G$,
induces a ring homomorphism $i_X^*: H_G^*(X)=H^*(X_G)\to H^*(X)$.

\subsection{Equivariant vector bundles and equivariant characteristic classes}
\label{sec:cG}

Let $G$ be a Lie group.
A continuous map $f:X\to Y$ between $G$-spaces
is called {\em $G$-equivariant} if 
$f(g\cdot x) = g\cdot f(x)$ for all $g\in G$ and $x\in X$.

Let $p: V\to X$ be a (real or complex) vector bundle over a
$G$-space $X$. We say $p:V\to X$ is a $G$-equivariant
vector bundle over $X$ if the following properties
hold.
\begin{itemize}
\item $V$ is a $G$-space.
\item $p$ is $G$-equivariant. 
\item For every $g\in G$, define $\tilde{\phi}_g: V\to V$ by $v\mapsto g\cdot v$,
and $\phi_g: X\to X$ by $x\mapsto g\cdot x$. Then 
$\tilde{\phi}_g$ is a vector bundle map covering $\phi_g$:
$$
\begin{CD}
V  @>{\tilde{\phi}_g}>>  V\\
@V{p}VV    @V{p}VV\\
X  @>{\phi_g}>>   X
\end{CD}
$$
\end{itemize}

\begin{example}
When $X$ is a point, a complex vector bundle
over $X$ is a complex vector space $V$, and 
a $G$-equivariant vector bundle over $X$ is
a representation $\rho: G\to GL(V)$.
\end{example}

Let $\pi:V\to X$ be a $G$-equivariant vector bundle
over a $G$-space $X$. Then $V_G$ is a vector bundle over $X_G$.  
Let $c$ be a characteristic
class of vector bundles (for example, Chern classes
$c_k$ and Chern characters $\ch_k$
for complex vector bundles, or the Euler class $e$ for
oriented real vector bundles). We define the corresponding
$G$-equivariant class $c^G$ by 
$$
c^G(V):= c(V_G)\in H^*(X_G)= H^*_G(X).
$$
If $V$ is a $G$-equivariant complex vector bundle over $X$
then we call $c_k^G(V)\in H^{2k}_G(X)$ (resp. $\ch_k^G(V)\in H^{2k}_G(X)$)
the $G$-equivariant $k$-th Chern class (resp. the $G$-equivariant
$k$-th Chern character) of $V$.  If $V$ is a $G$-equivariant
oriented real bundle of rank $r$ over $X$ then we call $e^G(V)\in H^r_G(X)$ the $G$-equivariant
Euler class of $V$.

\begin{example}\label{Ca}
For any $a\in \bZ$, let
$\bC_a$ be the 1-dimensional representation of $\bC^*$ 
with character $t\mapsto t^a$. Then $\bC_a$ can be viewed
as a $\bC^*$-equivariant vector bundle over a point. We have
$$
(\bC_a)_{\bC^*} = \{ (u,v)\in (\bC^\infty-\{0\})\times \bC\}
/ (u,v) \sim (t u, t^{-a} v)
\cong \cO_{\bP^\infty}(-a)
$$
$$
c_1^{\bC^*}(\bC_a) = au\in H^2_{\bC^*} (\pt)=\bZ u.
$$
\end{example}

\subsection{Push-forward}\label{sec:push-forward}

Let $X$, $Y$ be compact oriented manifolds
of dimension $r$, $s$, respectively, and let
$[X]\in H_r(X)$, $[Y]\in H_s(Y)$ be the fundamental
classes. A continuous map $f:X\to Y$ induces a group homomorphism
$$
f_*: H^k(X)\to H^{k+s-r}(Y)
$$
characterized by 
$$
(f_*\alpha)\cap [Y] = f_*(\alpha\cap [X]) \in H_{r-k}(Y).
$$
In particular, if $s\geq r$ then
$f_*1\in H^{s-r}(Y)$ is the Poincar\'{e} dual of
$f_*[X]\in H_r(Y)$. The push-forward map $f_*:H^*(X)\to H^*(Y)$ is a homomorphism
of $H^*(Y)$-modules:
$$
f_*(\alpha\cup f^*\beta)= (f_*\alpha)\cup \beta,\quad
\alpha\in H^*(X),\ \beta\in H^*(Y).
$$
If $g:Y\to Z$ is a continuous map and $Z$ is a compact oriented manifold
then $g_*\circ f_* = (g\circ f)_*:H^*(X)\to H^*(Z)$.

\begin{example}
\begin{enumerate}
\item  Let $V$ be a rank $q$ oriented real vector bundle
over $Y$, and let $X$ be the transversal intersection
of a section $s:Y\to V$ and the zero section.
Let $f:X\to Y$ be the inclusion. Then 
$f_*1=e(V)\in H^q(Y)$.

\item Let $p_X:X\to  \pt$  be the constant map to a point.
Then $p_{X*}:H^*(X)\to H^*(\pt)\cong\bQ$ can be identified with
$\int_X$.
\end{enumerate}
\end{example}

Suppose that a Lie group $G$ acts on $X$ and on $Y$, and
let $[X]^G\in H_r^G(X)$, $[Y]^G\in H_s^G(X)$ be $G$-equivariant
fundamental class, where $H_*^G(X)$ is the $G$-equivariant
homology groups with rational coefficients,  constructed
from $G$-invariant cycles in $X$. A $G$-equivariant map
$f:X\to Y$ induces a group homomorphism $f_*: H_k^G(X)\to H_k^G(Y)$.
It also induces
$$
f_*: H^k_G(X) \to H^{k+s-r}_G(Y)
$$
characterized by
$$
f_*(\alpha^G)\cap [Y]^G= f_*(\alpha^G\cap [X]^G)\in H^G_{k-r}(X).
$$
In particular, if $s\geq r$ then $f_*1\in H^{s-r}_G(Y)$ is the
equivariant Poincar\'{e} dual of $f_*[X]^G\in H_r^G(Y)$.

We have the following commutative diagram:
$$
\begin{CD}
H^k_G(X) @>{f_*}>> H^{k+s-r}_G(Y)\\
@ V{i_X^*}VV @VV{i_Y^*}V\\
H^k(X) @>{f_*}>> H^{k+s-r}(Y)
\end{CD}
$$
where $i_X^*$, $i_Y^*$  are defined as in the last paragraph
of Section \ref{sec:HG}. If $s\geq r$ then
$f_*1\in H^{s-r}(Y)$ is the equivariant Poincare dual
of $f_*[X]^G\in H_r^G(Y)$.

The push-forward map $f_*:H^*_G(X)\to H^*_G(Y)$ is a homomorphism
of $H^*_G(Y)$-modules:
$$
f_*(\alpha^G\cup f^*\beta^G)= (f_*\alpha^G)\cup \beta^G,\quad
\alpha^G\in H^*_G(X),\ \beta^G\in H^*_G(Y).
$$

If $G$ acts on another compact oriented manifold $Z$, and
$g:Y\to Z$ is a $G$-equivariant map,
then $g_*\circ f_* = (g\circ f)_*:H_k^G(X)\to H_k^G(Z)$,
$H^k_G(X)\to H^{k+\dim Z-\dim X}_G(Z)$.

\begin{example}
\begin{enumerate}
\item  Let $V$ be a $G$-equivariant rank $q$ oriented real vector bundle
over $Y$, and let $X$ be the transversal intersection
of a $G$-equivariant section $s:Y\to V$ and the zero section.
Let $f:X\to Y$ be the inclusion. Then 
$f_*1=e^G(V)\in H^q_G(Y)$.

\item Let $p_X:X\to  \pt$  be the constant map to a point.
Then $(p_X)_*: H^*_G(X)\to H^*_G(\pt)=H^*(BG)$ is denoted
by $\int_{[X]^G}$.
\end{enumerate}
\end{example}

\subsection{Localization}\label{sec:localization}
Suppose that $T=(\bC^*)^l$ acts on a compact oriented manifold $M$, and suppose
that each connected component of the $T$ fixed points
set $M^T\subset M$ is a compact orientable submanifold of $M$. 
Let $F_1,\ldots, F_N$ be the connected components of $M^T$. Then
$(F_j)_T = F_j\times BT$, so  
$$
H_T^*(F_j) = H^*(F_j\times BT)\cong H^*(F_j)\otimes_\bQ H^*(BT)=H^*(F_j)\otimes_\bQ R_T
$$
where $R_T=H^*(BT)=\bQ[u_1,\ldots,u_l]$.
Let $Q_T=\bQ(u_1,\ldots,u_l)$ be the 
fractional field of $R_T$.
The equivariant Euler class $e^T(N_j)$ of the
normal bundle $N_j$ of $F_j$ in $M$ is invertible
in $H^*(F_j)\otimes_\bQ Q_T$. The inclusion
$i_j:F_j\to M$ induces a homomorphism $(i_j)_*:H_T^*(F_j)\to H_T^*(M)$
of $R_T$-modules and can
be extended to 
$$
(i_j)_*: H_T^*(F_j)\otimes_{R_T}Q_T \to  H_T^*(M)\otimes_{R_T}Q_T.
$$

\begin{theorem}[{Atiyah-Bott localization formula  \cite{AtBo}}] 
\item If $\alpha^T\in H_T^*(X)$ then
\begin{equation}\label{eqn:localization}
\alpha^T =\sum_{j=1}^N (i_j)_*\frac{i_j^*\alpha^T}{e^T(N_j)}.
\end{equation}
\end{theorem}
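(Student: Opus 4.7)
The plan is to reduce the formula to two structural facts about equivariant cohomology localized at $Q_T$, then assemble them by restricting both sides of \eqref{eqn:localization} to each fixed component. The two ingredients I would aim to prove first are: (a) the restriction map $i^*: H_T^*(M)\otimes_{R_T}Q_T \to \bigoplus_{j=1}^N H_T^*(F_j)\otimes_{R_T}Q_T$ is an isomorphism (\emph{localization isomorphism}); and (b) the equivariant self-intersection formula $i_j^*(i_j)_*\beta = e^T(N_j)\cup \beta$ for $\beta\in H_T^*(F_j)\otimes_{R_T}Q_T$, together with the orthogonality $i_k^*(i_j)_*\beta = 0$ for $k\neq j$.

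For (b), the orthogonality is immediate, since for $k\neq j$ the cycles defining $(i_j)_*\beta$ can be chosen disjoint from $F_k$, so their restriction vanishes. The self-intersection statement follows from the standard identification of a $T$-equivariant tubular neighborhood of $F_j$ with the total space of $N_j$, under which $(i_j)_*1$ pulls back to $e^T(N_j)$, combined with the projection formula $i_j^*((i_j)_*\beta) = i_j^*(i_j)_*(1)\cup \beta$. This is routine and I would expect no real obstacle here.

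The main obstacle is (a). I would prove it by showing that $H_T^*(M\setminus M^T)\otimes_{R_T}Q_T=0$ and then invoking the long exact sequence of the pair $(M,M\setminus M^T)$, together with the Thom isomorphism for the equivariant normal bundles of the $F_j$ to identify $H_T^*(M,M\setminus M^T)$ with $\bigoplus_j H_T^{*-\mathrm{rk}\,N_j}(F_j)$. The vanishing $H_T^*(M\setminus M^T)\otimes Q_T=0$ is the heart of the matter: cover $M\setminus M^T$ by finitely many $T$-invariant opens $U_\alpha$ such that each $U_\alpha$ is $T$-equivariantly homotopy equivalent to $T/H_\alpha \times F_\alpha$ for some proper closed subgroup $H_\alpha \subsetneq T$ (this uses compactness of $M$ and the slice theorem applied to a $T$-invariant Riemannian metric). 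Then $H_T^*(U_\alpha)\cong H^*(F_\alpha)\otimes H^*(BH_\alpha)$, and any character $\chi$ of $T$ vanishing on $H_\alpha$ gives an element $c_1^T(\bC_\chi)\in R_T$ that acts as $0$ on $H_T^*(U_\alpha)$ but is a unit in $Q_T$; choosing such a character for each proper $H_\alpha$ and combining via Mayer--Vietoris yields the vanishing.

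Once (a) and (b) are in place, the formula is immediate. Indeed, restricting the right hand side of \eqref{eqn:localization} via $i_k^*$ gives
\[
\sum_{j=1}^N i_k^*(i_j)_*\frac{i_j^*\alpha^T}{e^T(N_j)} = i_k^*(i_k)_*\frac{i_k^*\alpha^T}{e^T(N_k)} = e^T(N_k)\cdot \frac{i_k^*\alpha^T}{e^T(N_k)} = i_k^*\alpha^T,
\]
by orthogonality and the self-intersection formula. Since this agrees with $i_k^*$ of the left hand side for every $k$, and $i^*$ is injective after tensoring with $Q_T$ by (a), equation \eqref{eqn:localization} holds in $H_T^*(M)\otimes_{R_T}Q_T$.
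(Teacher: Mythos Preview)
The paper does not prove this theorem; it states it with a citation to Atiyah--Bott \cite{AtBo} and moves on. Your proposal, by contrast, sketches an actual proof, and the strategy you outline --- establish the localization isomorphism $i^*$ is injective over $Q_T$ via the vanishing of $H_T^*(M\setminus M^T)\otimes_{R_T}Q_T$, verify the self-intersection/orthogonality identities for $i_k^*(i_j)_*$, then check the formula by restriction to each $F_k$ --- is correct and is essentially the standard argument one finds in Atiyah--Bott and in later textbook treatments. The step you flag as the main obstacle (the torsion vanishing on the complement of the fixed locus via slices and Mayer--Vietoris) is indeed where the content lies, and your outline of it is accurate; everything else is routine. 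So there is nothing to compare against in the paper itself, but your argument stands on its own.
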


\begin{corollary}[{integration formula \cite[Equation (3.8)]{AtBo}}] If $\alpha\in H_T^*(X)$ then
\begin{equation}\label{eqn:integral}
\int_{[X]^T} \alpha^T =\sum_{j=1}^N \int_{[F_j]^T}\frac{i_j^*\alpha^T}{e^T(N_j)}.
\end{equation}
\end{corollary}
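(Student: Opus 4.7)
The plan is to derive the integration formula as an immediate consequence of the Atiyah-Bott localization formula \eqref{eqn:localization} by applying the equivariant push-forward to a point. Let $p_X : X \to \pt$ and $p_{F_j} : F_j \to \pt$ denote the constant maps, and recall from Section \ref{sec:push-forward} that $\int_{[X]^T} = (p_X)_*$ and $\int_{[F_j]^T} = (p_{F_j})_*$ as homomorphisms $H_T^*(X) \to H^*(BT)$ and $H_T^*(F_j) \to H^*(BT)$, respectively.

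First I would apply $(p_X)_*$ to both sides of \eqref{eqn:localization}. Since $(p_X)_*$ is $R_T$-linear and extends naturally after tensoring with $Q_T$, this yields
\begin{equation*}
(p_X)_* \alpha^T = \sum_{j=1}^N (p_X)_* (i_j)_* \left( \frac{i_j^* \alpha^T}{e^T(N_j)} \right).
\end{equation*}
Next I would invoke the functoriality of push-forward: since $p_X \circ i_j = p_{F_j}$ and both maps are $T$-equivariant, we have $(p_X)_* \circ (i_j)_* = (p_{F_j})_*$. Substituting gives exactly \eqref{eqn:integral}.

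The one subtlety is that $i_j^* \alpha^T / e^T(N_j)$ is an element of $H_T^*(F_j) \otimes_{R_T} Q_T$ rather than $H_T^*(F_j)$ itself, since inversion of $e^T(N_j)$ requires passing to the fraction field $Q_T$. I would therefore note that the push-forward $(p_{F_j})_*$ extends $Q_T$-linearly to these localized modules, and that both sides of \eqref{eqn:integral} a priori live in $Q_T$, even though the left-hand side actually lies in $R_T$. The identity of the two sides is then a formal consequence of \eqref{eqn:localization} and functoriality; no new geometric input is needed. In short, the main (and only) point is to recognize the integration formula as the image of the localization formula under the proper push-forward $X \to \pt$.
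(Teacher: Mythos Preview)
Your proposal is correct and matches the paper's implicit reasoning: the paper states the integration formula as an immediate corollary of the localization formula \eqref{eqn:localization} without giving any further argument, and the only way to read ``corollary'' here is exactly the push-forward-to-a-point argument you wrote out, using $(p_X)_* \circ (i_j)_* = (p_{F_j})_*$. There is nothing more to say.
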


Each term of the right hand side is a rational function 
in $u_1,\ldots,u_l$, while the left hand side is a {\em polynomial} 
in  $u_1,\ldots,u_l$. If $\alpha\in H^k_T(X)$ then
$\int_{[X]^T}\alpha^T \in H^{k-\dim X}_T(\pt)$. In particular,
\begin{itemize}
\item If $k=\dim X$ then $\int_{[X]^T}\alpha^T\in \bQ$.
\item If $k<\dim X$ or if $k-\dim X$ is odd,  then $\int_{[X]^T}\alpha^T=0$.
\end{itemize}
$H^{2m}_T(\pt)$ is the space of polynomials in $u_1,\ldots, u_l$ with 
$\bQ$ coefficients, homogeneous of degree $m$.

We also have $i_{j_1}^* i_{j_2*} =0$ if $j_1\neq j_2$. Therefore the
inclusion $i: M^T=\cup_{j=1}^N F_j \to M$ induces an isomorphism
$$
i_*: H_T^*(M^T)\otimes_{R_T}Q_T
=\bigoplus_{j=1}^N H^*(F_j)\otimes_\bQ Q_T
\to H^*_T(M)\otimes_{R_T}Q_T.
$$

\begin{example}
Let $\tT=(\bC^*)^{r+1}$ act on $\bP^r$ as in Example \ref{Pr}.
Then the fixed points set consists of $(r+1)$ isolated points.
$$
(\bP^r)^{\tT}=\{ p_0=[1,0,\ldots,0],\quad p_1=[0,1,0,\ldots,0],\quad p_r=[0,\ldots, 0,1] \}.
$$
Let $D_j$ be the $\tT$-invariant divisor defined by 
$x_j=0$. Then $x_j$ is a $\tT$-equivariant section of 
the $\tT$-equivariant line bundle $\cO_{\bP^r}(D_j)$.
$\{x_k\mid k\neq j\}$ defines a $T$-equivariant
section $s_j$ of the rank $r$ vector bundle 
$\oplus_{k\neq j}\cO_{\bP^r}(D_j)$. The section
$s_j$  intersects the zero section transversally
at a single point $p_j$. Let $i_j:p_j\to \bP^r$ be the inclusion, 
and let $h_j=(c_1)_{\tT}(\cO(D_j))\in H^2_{\tT}(\bP^r)$. Then
$$
(i_j)_*1=\prod_{k\neq j}h_k\in H^{2r}_{\tT}(\bP^r).
$$
We have
$$
i_j^* h_k = c_1^{\tT}(\cO_{\bP^r}(D_k)_{p_j})=\tu_k-\tu_j \in H_{\tT}^2(p_j).
$$
$D_0\cap D_1\cap \cdots \cap D_r$ is empty, so 
$$
h_0h_1\cdots h_r=0.
$$

For a fixed $k\in\{1,\ldots,r\}$,  $(i_j)^*(h_k-h_0)= \tu_k-\tu_0$
for all $j\in \{0,\ldots, r\}$.  By localization, 
$h_k-h_0=\tu_k-\tu_0$. Define
$$
H= h_0 - \tu_0 = h_1 -\tu_1 =\cdots = h_r - \tu_r.
$$
Then
$$
(i_j)_*1 =\prod_{k\neq j}(H +\tu_k),\quad i_j^* H = -\tu_j,\quad 
\prod_{j=0}^l(H+\tu_j)=0,
$$
where the last identity agrees with the relation derived in 
Example \ref{Pr}.
\end{example}

\begin{definition}[equivariant integration on noncompact spaces]\label{df:residue}
Suppose that $X$ is a {\em noncompact} oriented manifold, but
$X^T$ is a finite union of compact, orientable submanifolds
$F_1,\ldots,F_N$. We define
$\int_{[X]^T}: H^*_T(M)\to \bQ(u_1,\ldots,u_l)$ by 
$$
\int_{[X]^T} \alpha^T =\sum_{j=1}^N \int_{[F_j]^T}\frac{i_j^*\alpha^T}{e^T(N_j)}.
$$
\end{definition}

In the above Definition \ref{df:residue}, $\int_{[X]^T}\alpha^T$ can be nonzero even if 
$\alpha^T\in H^k_T(X)$ and $k<\dim M$. The following is a simple example.
\begin{example}
Let $T=(\bC^*)^2$ act on $\bC^2$ by
$(t_1,t_2)\cdot (z_1, z_2)=(t_1 z_1, t_2 z_2)$ for
$(t_1,t_2)\in T$, $(z_1,z_2)\in \bC^2$. Then
$$
\int_{[\bC^2]^T }1 =\frac{1}{e^T(T_0\bC^2)}=\frac{1}{u_1u_2}.
$$
\end{example}

\subsection{Equivariant Riemann-Roch}\label{sec:RR}
Let $X$ be a compact complex manifold with a holomorphic $T$-action. 
The constant map $X\to \pt$ also induces an additive map between
equivariant $K$-theories:
$$
\pi_!: K_T(X)\to K_T(\pt),\quad
\cE\mapsto \sum_i (-1)^i H^i(X,\cE)
$$ 
where $\cE$ is a $T$-equivariant holomorphic vector bundle
over $X$, and $H^i(X,\cE)$ are the sheaf cohomology groups, which
are representations of $T$.

A representation of $T$ is determined by its $T$-equivariant Chern 
character $\ch^T$. We can compute $\ch^T (\pi_!\cE)$ by Grothendieck-Riemann-Roch
(GRR) theorem and the Atiyah-Bott localization formula. Applying
GRR to  the fibration $\pi: X_T \to BT$, we have
$$
\ch^T (\pi_! \cE)  = \int_{[X]^T} \ch^T(\cE)\td^T(TX)
$$
where $\td^T(TX)$ is the $T$-equivariant Todd  class of
the tangent bundle $TX$ of $X$. 
By the integration formula \eqref{eqn:integral},
$$
 \int_{[X]^T } \ch^T(\cE)\td^T(TX)
=\sum_{j=1}^N \int_{ [F_j]^T }\frac{i_j^*\left( \ch^T(\cE) \td^T(TX)\right)}{e^T(N_j)}. 
$$

We now specialize to the case where $F_j$ are isolated points.
We write $p_1,\ldots, p_N$ instead of $F_1,\ldots, F_N$.
Let $r=\dim_\bC X$, and let
$$
x_{j,1},\ldots, x_{j,r} \in 
H^2_T(\pt) = \bigoplus_{i=1}^l \bQ u_i 
$$
be the weights of the $T$-action on the tangent
space $T_{p_j}X$ of $X$ at $p_j$.   
Then
$$
i_j^* \td^T(TX) =\prod_{k=1}^r \frac{x_{j,k}}{1-e^{-x_{j,k}}},\quad
e^T(N_j)= e^T(T_{p_j}X) = \prod_{k=1}^r x_{j,k}.
$$
Let $m=\rank_\bC \cE$, and let
$$
y_{j,1},\ldots, y_{j,m} \in 
H^2_T(\pt) 
$$
be the weights of the $T$-action on the fiber $\cE_{p_j}$ 
of $\cE$ at $p_j$. Then
$$
i_j^* \ch^T(\cE) = \sum_{l=1}^m e^{y_{j,l} }.
$$
Therefore
\begin{equation}\label{eqn:GRR}
\ch^T(\pi_!\cE) =\sum_{j=1}^N \frac{\sum_{l=1}^m e^{y_{j,l}} }
{\prod_{k=1}^r (1-e^{-x_{j,k}}) }.
\end{equation}

\begin{example}\label{Pone}
Suppose that $T=(\bC^*)^l$ acts on  
on $\bP^1$, and let $L\to \bP^1$ be a $T$-equivariant line bundle.
The weights of the $T$-actions on 
$T_0\bP^1$, $T_\infty \bP^1$, $L_0$, $L_\infty$
are given by $u$, $-u$, $w$, $w-au$, respectively,
where $u,w\in H^2_T(\pt;\bQ)$ and $a\in \bZ$ is the degree of $L$. 
Then
\begin{eqnarray*}
&& \ch^T(H^0(\bP^1,L)-H^1(\bP^1,L)) = \int_{[\bP^1]^T}\ch^T(L)\td^T(T\bP^1)\\
&=&\frac{e^w}{1-e^{-u}} + \frac{e^{w-au}}{1-e^u}
=\begin{cases}
\sum_{i=0}^a e^{w-iu}, & a\geq 0\\
- \sum_{i=1}^{-a-1} e^{w+iu} & a< 0
\end{cases}
\end{eqnarray*}
Indeed,
$$
H^0(\bP^1,L)=\begin{cases}
\sum_{i=0}^a e^{w-iu}, & a\geq 0,\\
0, & a<0. \end{cases}\quad\quad
H^1(\bP^1,L)=\begin{cases}
0, & a\geq 0,\\
\sum_{i=1}^{-a-1} e^{w+iu}, & a<0.
\end{cases}
$$
\end{example}

\subsection{Basic intersection theory in algebraic geometry} \label{sec:intersectionAG}

We refer to \cite{Fu1} for intersection theory
on schemes, and to \cite{Vi} for intersection theory on Deligne-Mumford stacks.

Given a scheme or a Deligne-Mumford
stack $M$ over $\bC$, let $A_*(M)=\oplus_k A_k(M)$
be the Chow groups of $M$ with rational coefficients,
and let $A^*(M) =\oplus_k A^k(M)$ be the operational
Chow cohomology groups (see \cite{Fu1}) with rational coefficients. 
There is a cap product
$$
A^k(M)\times A_l(M)\to A_{l-k}(M),\quad (\alpha,\beta)\mapsto \alpha\cap \beta,
$$
and a group homomorphism $\deg:A_0(M)\to \bQ$. If $M$ is a scheme and
$p\in M$ is a smooth point then $\deg[p]=1$; if $M$ is a Deligne-Mumford
stack and $p\in M$ is a smooth point with automorphism group $\Aut(p)$ (which is
a finite group) then  $\deg[p]=1/|\Aut(p)|$. (In this paper, $|S|$ denotes
the cardinality of a finite set $S$.)  We extend
$\deg$ to $A_*(X)$ by sending $A_k(X)$ to zero for $k\neq 0$.

If $M$ is a proper smooth scheme  or a proper smooth Deligne-Mumford stack of dimension $r$, 
then there is a fundamental class
$$
[M]\in A_r(M).
$$
We define $\int_M:  A^*(M)\to \bQ$ by
$$
\int_{M}\alpha =\deg(\alpha\cap[M])\in \bQ.
$$

\subsection{Equivariant intersection theory in algebraic geometry}
\label{sec:AG}

We have discussed equivariant intersection theory on topological
spaces, and localization of equivariant cohomology on manifolds.
We now discuss equivariant intersection theory on schemes
and Deligne-Mumford stacks, and virtual localization. This is
similar to the discussion in Section \ref{sec:HG} -- \ref{sec:RR}, so 
we will just give a brief summary in the case $G=T=(\bC^*)^l$. 
We refer to \cite{EdGr1} for equivariant intersection theory on schemes and algebraic spaces.

Suppose that $T=(\bC^*)^l$ acts on a scheme or a Deligne-Mumford stack $M$ over $\bC$. 
The $T$-equivariant  operational Chow cohomology of $M$ is defined to be 
the ordinary operational Chow cohomology of the quotient stack $[M/T]$:
$$
A_T^*(M) := A^*([M/T]).
$$
In particular, 
$$
A_T^*(\pt)=A^*([\pt/T])=\bQ[u_1,\ldots,u_l].
$$
The $T$-equivariant Chow groups $A_*^T(M)$ is constructed from
$T$-invariant cycles in $M$. We refer to \cite{EdGr1} for
the construction.

A $T$-equivariant vector bundle $V\to M$ corresponds
to a vector bundle $[V/T]\to [M/T]$.  Define
the  $T$-equivariant Chern classes and Chern characters
of $E$ by 
$$
c_k^T(V):= c_k([V/T]) \in A_T^k(M),\quad
\ch_k^T(V):= \ch_k([V/T])\in A_T^k(M).
$$

Now suppose that $M$ is a proper Deligne-Mumford
stack with a $T$-equivariant perfect obstruction theory of virtual
dimension $m$. In particular,  locally there exists a two term complex
of $T$-equivariant vector bundles $E\to F$ over $M$, where
$\rank F -\rank E=m$,  such that
we have an exact sequence of $T$-equivariant sheaves:
$$
0\to \cT^1\to F^\vee\to E^\vee \to \cT^2\to 0.
$$
The perfect obstruction theory defines a $T$-equivariant virtual fundamental class
$$
[M]^{\vir,T}\in A_m^T(M)
$$
which defines
$$
\int_{[M]^{\vir,T}}: A^k_T(M)\to A^{k-m}_T(\pt).
$$
In particular, $\int_{[M]^{\vir,T}}$ sends $A^k_T(M)$ to $0$ if 
$k<m$.

\subsection{Virtual localization} \label{sec:VL}

Let $M^T$ denote the substack of $T$-fixed points in $M$. 
Let $F_1,\ldots, F_N$ be the connected components of
$M^T$. We assume that each $F_j$ is a proper Deligne-Mumford
substack. Given any $\xi\in M^T$, let
$T^1$ and $T^2$ be the tangent and obstruction
spaces at $\xi$. Then $T$ acts on $T^1$ and $T^2$. 
Let $T^{i,f}\subset T^i$ be the maximal subspace
where $T$ acts trivially. Then
$T^i=T^{i,f}\oplus T^{i,m}$. We call
$T^{i,f}$ and $T^{i,m}$ the fixed
and moving parts of $T^i$, respectively. Then
$T^{i,f}$ defines a perfect obstruction theory
on each $F_j$ and a virtual fundamental class
$[F_j]^{\vir,T} \in A_*^T(F_j)$. The virtual
normal bundle $N^\vir_j$ of $F_j$ in $M$ is
$N^\vir_j= T^{1,m}_j - T^{2,m}_j$. 
The $T$-equivariant Euler class
$e_T(N^\vir_j) \in A_T^*(F_j)$ is invertible in
$$
A^*_T (F_j)\otimes_{R_T} Q_T = A^*(F_j)\otimes_{\bQ} Q_T
$$
where $R_T=\bQ[u_1,\ldots,u_l]$, $Q_T=\bQ(u_1,\ldots,u_l)$.
Let $i_j: F_j\to M$ be the inclusion. Assuming the existence
of a $T$-equivariant embedding from $M$ into a {\em smooth} Deligne-Mumford stack,
Graber and Pandharipande proved the following localization formula \cite{GrPa}
(see also K. Behrend \cite{Be2}, A. Kresch \cite{Kr}):

\begin{theorem}[virtual localization]
\begin{equation}
[X]^\vir_T =\sum_{j=1}^N (i_j)_* \frac{[F_j]^{\vir,T} }{e^T(N^\vir_j)}.
\end{equation}
\end{theorem}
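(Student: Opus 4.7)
The plan is to follow Graber and Pandharipande's strategy, which reduces the virtual localization statement to the classical (smooth) Atiyah-Bott localization applied to an ambient space. First I would invoke the hypothesis that $M$ admits a $T$-equivariant closed embedding into a smooth Deligne-Mumford stack $Y$. This allows the perfect obstruction theory to be represented \emph{globally} (not just locally) by a two-term complex $E^\bullet = [E^{-1} \to E^0]$ of $T$-equivariant vector bundles on $M$, with $h^i(E^\bullet{}^\vee) = \cT^i$. Passing to the bundle stack $\mathfrak{E} := h^1/h^0((E^\bullet)^\vee)$ gives a $T$-equivariant smooth Artin stack into which the intrinsic normal cone $\mathfrak{C}_M$ embeds as a $T$-invariant closed substack.

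Next I would use the Behrend-Fantechi construction: $[M]^{\vir,T}$ is the refined Gysin pullback of $[\mathfrak{C}_M]$ along the zero section of $\mathfrak{E}$. Since $\mathfrak{E}$ is smooth, one has equivariant Chow-theoretic localization available on $\mathfrak{E}$ in the sense of Edidin-Graber: after inverting the non-trivial characters of $T$, every class is pushed forward from the $T$-fixed substack. Applying this to $[\mathfrak{C}_M]$ and then Gysin-pulling back should express $[M]^{\vir,T}$ as a sum over components $F_j$ of $M^T$ of contributions coming from the fixed loci of the cone.

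The heart of the argument is to identify these contributions. I would decompose $E^\bullet|_{M^T} = (E^\bullet)^f \oplus (E^\bullet)^m$ into fixed and moving parts under the induced $T$-action, and then show that the fixed-locus piece of $\mathfrak{C}_M$ sits inside $h^1/h^0(((E^\bullet)^f)^\vee)$ and realizes the intrinsic normal cone $\mathfrak{C}_{F_j}$ of $F_j$. This is precisely the compatibility statement \emph{$(\mathfrak{C}_M)^T = \mathfrak{C}_{M^T}$}, which should be checked by embedding $M$ $T$-equivariantly in $Y$ and comparing normal cones in the ambient smooth stack where the splitting is obvious. Granted this, the fixed part $(E^\bullet)^f$ defines the perfect obstruction theory on $F_j$ yielding $[F_j]^{\vir,T}$, and the moving part $(E^\bullet)^m|_{F_j}$ contributes the virtual normal bundle $N^\vir_j = T^{1,m}_j - T^{2,m}_j$ whose inverse equivariant Euler class appears in the denominator.

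The main obstacle is this cone-splitting step, i.e.\ verifying \emph{$(\mathfrak{C}_M)^T = \mathfrak{C}_{M^T}$} and more generally that the virtual class construction commutes with restriction to the $T$-fixed locus. Everything else is a formal manipulation: once the splitting is in place, the pushforward by $(i_j)_*$, the factorization of the equivariant Euler class of $(E^\bullet)^m|_{F_j}$ in terms of $e_T(T^{1,m}_j)/e_T(T^{2,m}_j)$, and the assembly into the displayed formula follow by standard excess-intersection and projection-formula arguments on the smooth ambient stacks.
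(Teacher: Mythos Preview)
The paper does not give its own proof of this theorem: it is stated in Section~\ref{sec:VL} as a result of Graber--Pandharipande \cite{GrPa} (with references to Behrend \cite{Be2} and Kresch \cite{Kr}), under the hypothesis of a $T$-equivariant embedding of $M$ into a smooth Deligne--Mumford stack, and is then used as a black box throughout the rest of the paper. Your outline is a faithful sketch of the Graber--Pandharipande argument that the paper is citing, so there is nothing to compare against beyond noting that the paper takes the theorem for granted.
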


\begin{corollary}[intergration formula] If $\alpha^T\in A_T^*(M)$ then 
\begin{equation}\label{eqn:not-proper}
\int_{[M]^{\vir,T} }\alpha^T =\sum_{j=1}^N \int_{[F_j]^{\vir,T} }\frac{i_j^*\alpha^T}{e^T(N^\vir_j)}.
\end{equation}
\end{corollary}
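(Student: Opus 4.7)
The plan is to derive the integration formula as a direct consequence of the virtual localization theorem by applying the push-forward to a point on both sides and invoking the projection formula.

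First I would rewrite the left-hand side in push-forward notation: if $p_M\colon M\to \pt$ is the structure map, then by definition
\[
\int_{[M]^{\vir,T}}\alpha^T = (p_M)_*\bigl(\alpha^T\cap [M]^{\vir,T}\bigr)\in A^{*-m}_T(\pt)\otimes_{R_T}Q_T.
\]
Substituting the virtual localization formula $[M]^{\vir,T}=\sum_j (i_j)_*\bigl([F_j]^{\vir,T}/e^T(N^\vir_j)\bigr)$ into this expression gives
\[
\int_{[M]^{\vir,T}}\alpha^T = \sum_{j=1}^N (p_M)_*\Bigl(\alpha^T\cap (i_j)_*\frac{[F_j]^{\vir,T}}{e^T(N^\vir_j)}\Bigr),
\]
where the denominators are inverted in $Q_T$ using that $e^T(N^\vir_j)$ is invertible in $A^*(F_j)\otimes_\bQ Q_T$ (as noted just before the theorem).

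Next I would apply the projection formula for the closed immersion $i_j\colon F_j\hookrightarrow M$, namely $\alpha^T\cap (i_j)_*\beta = (i_j)_*\bigl(i_j^*\alpha^T\cap \beta\bigr)$, which yields
\[
\sum_{j=1}^N (p_M)_*(i_j)_*\Bigl(\frac{i_j^*\alpha^T\cap [F_j]^{\vir,T}}{e^T(N^\vir_j)}\Bigr).
\]
The final step is functoriality of push-forward: $(p_M)_*\circ (i_j)_* = (p_{F_j})_*$ where $p_{F_j}\colon F_j\to \pt$, since $p_M\circ i_j=p_{F_j}$. Recognizing $(p_{F_j})_*\bigl(i_j^*\alpha^T\cap [F_j]^{\vir,T}/e^T(N^\vir_j)\bigr)$ as $\int_{[F_j]^{\vir,T}}\bigl(i_j^*\alpha^T/e^T(N^\vir_j)\bigr)$ produces the claimed identity.

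The only real subtlety, and the step most worth checking carefully, is ensuring that the push-forward $p_*$ and the cap product extend correctly to the localized ring $A^*_T(-)\otimes_{R_T}Q_T$, so that the manipulations above make sense even though $N^\vir_j$ is only a virtual bundle whose equivariant Euler class is inverted. Since $R_T\to Q_T$ is flat and $p_*$ is $R_T$-linear, both the projection formula and functoriality of push-forward extend by tensoring with $Q_T$, so no additional work is required. Everything else is a routine reorganization of the localization theorem.
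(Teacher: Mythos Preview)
Your argument is correct and is exactly the standard derivation of the integration formula from the localization formula via push-forward to a point together with the projection formula and functoriality of proper push-forward. The paper does not spell out a proof of this corollary at all (it is simply stated after the virtual localization theorem), so your write-up supplies precisely the routine verification the paper leaves implicit.
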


\begin{definition}[equivariant integration on non-proper Deligne-Mumford stack]  
Suppose that $X$ is a {\em non-proper} Deligne-Mumford stack
with a perfect obstruction theory, and
$X^T$ is a finite union of {\em proper} Deligne-Mumford stacks
$F_1,\ldots,F_N$.  We define
$\int_{[X]^{\vir,T} }: A^*_T(M)\to \bQ(u_1,\ldots,u_l)$ by 
$$
\int_{[X]^{\vir,T} } \alpha^T =\sum_{j=1}^N \int_{[F_j]^{\vir,T} }\frac{i_j^*\alpha^T}{e^T(N_j)}.
$$
\end{definition}

\section{Gromov-Witten Theory}
\label{sec:GWreview}

In this section, we give a brief review
of Gromov-Witten theory. We work over $\bC$.

\subsection{Moduli of stable curves and Hodge integrals} 
\label{sec:hodge}

An $n$-pointed, genus $g$ prestable curve
is a connected algebraic curve $C$ of
arithmetic genus $g$ together with
$n$ ordered marked points $x_1,\ldots, x_n\in C$,  
where $C$ has at most nodal singularities,
and $x_1,\ldots,x_n$ are distinct smooth points.
An $n$-pointed, genus $g$ prestable
curve $(C,x_1,\ldots,x_n)$ is {\em stable} if its automorphism
group is finite, or equivalently,
$$
\Hom_{\cO_C}(\Omega_C(x_1+\cdots+x_n), \cO_C) =0.
$$

Let $\Mbar_{g,n}$ be the moduli space of $n$-pointed, genus $g$ stable curves, 
where $n, g$ are nonnegative integers. We assume that
$2g-2+n>0$, so that $\Mbar_{g,n}$ is nonempty.
Then $\Mbar_{g,n}$ is a proper smooth Deligne-Mumford
stack of dimension $3g-3+n$ \cite{DeMu, KnMu, Kn2, Kn3}. The tangent space
of $\Mbar_{g,n}$ at a moduli point
$[(C,x_1,\ldots,x_n)]\in \Mbar_{g,n}$ is given by
$$
\Ext^1_{\cO_C}(\Omega_C(x_1+\cdots+x_n), \cO_C).
$$
Since $\Mbar_{g,n}$ is a proper Deligne-Mumford stack, we may define
$$
\int_{\Mbar_{g,n}}:A^*(\Mbar_{g,n})\to \bQ.
$$

We now introduce some classes in $A^*(\Mbar_{g,n})$.
There is a forgetful morphism $\pi:\Mbar_{g,n+1}\to \Mbar_{g,n}$ 
given by forgetting the $(n+1)$-th marked point (and contracting
the unstable irreducible component if there is one):
$$
[(C,x_1,\ldots,x_n,x_{n+1})]\mapsto
[(C^{st}, x_1,\ldots, x_n)]
$$
where $(C^{st}, x_1,\ldots,x_n)$ is the stabilization
of the prestable curve $(C,x_1,\ldots,x_n)$.
$\pi:\Mbar_{g,n+1}\to \Mbar_{g,n}$ can be identified
with the universal curve over $\Mbar_{g,n}$. 
\begin{itemize}
\item ($\lambda$ classes)
Let $\omega_\pi$ be the relative dualizing sheaf
of $\pi:\Mbar_{g,n+1}\to \Mbar_{g,n}$. 
The Hodge bundle $\bE=\pi_*\omega_\pi$ 
is a rank $g$ vector bundle over $\Mbar_{g,n}$ whose fiber over the moduli point
$[(C,x_1,\ldots,x_n)]\in \Mbar_{g,n}$
is $H^0(C,\omega_C)$, the space of sections of the
dualizing sheaf $\omega_C$ of the curve $C$.  
The $\la$ classes are defined by
$$
\la_j=c_j(\bE)\in A^j(\Mbar_{g,n}).
$$
\item ($\psi$ classes)
The $i$-th marked point $x_i$ gives rise a section
$s_i:\Mbar_{g,n}\to \Mbar_{g,n+1}$ of the universal curve.
Let $\bL_i=s_i^*\omega_\pi$ be the line bundle
over $\Mbar_{g,n}$ whose fiber over the moduli
point $[(C,x_1,\ldots,x_n)]\in \Mbar_{g,n}$ is the
cotangent line $T_{x_i}^*C$ of $C$ at $x_i$.
The $\psi$ classes are defined by
$$
\psi_i=c_1(\bL_i)\in A^1(\Mbar_{g,n}). 
$$
\end{itemize}

{\em Hodge integrals} are top intersection numbers of $\lambda$ classes and
$\psi$ classes:
\begin{equation}\label{eqn:hodge}
\int_{\Mbar_{g,n}}\psi_1^{a_1}\cdots \psi_n^{a_n} 
\la_1^{k_1}  \cdots \la_g^{k_g}  \in \bQ.
\end{equation}
By definition, \eqref{eqn:hodge} is zero unless
$$
a_1+\cdots + a_n + k_1 + 2k_2 + \cdots + g k_g = 3g-3+n.
$$

Using Mumford's Grothendieck-Riemann-Roch calculations in \cite{Mu},  
Faber proved, in \cite{Fa},  that general Hodge integrals can
be uniquely reconstructed from the $\psi$ integrals 
(also known as {\em descendant integrals}):
\begin{equation}\label{eqn:descendant}
\int_{\Mbar_{g,n}}\psi_1^{a_1}\cdots \psi_n^{a_n}. 
\end{equation}
The descendant integrals can be computed recursively by Witten's conjecture
which asserts that the $\psi$ integrals \eqref{eqn:descendant}
satisfy a system of differential equations known as the KdV equations \cite{Wi}.
The KdV equations and the string equation determine all the $\psi$ integrals
\eqref{eqn:descendant} from the initial value $\int_{\Mbar_{0,3}} 1=1$.
For example, from the initial value $\int_{\Mbar_{0,3}}1=1$ and the string equation,
one can derive the following formula of genus $0$ descendant integrals:
 \begin{equation}\label{eqn:genus-zero-psi}
\int_{\Mbar_{0,n}}\psi_1^{a_1}\cdots\psi_n^{a_n}=\frac{(n-3)!}{a_1!\cdots a_n!}
\end{equation}
where $a_1+\cdots+a_n=n-3$ \cite[Section 3.3.2]{Ko2}.

The Witten's conjecture was first proved by Kontsevich in \cite{Ko1}. By now, 
Witten's conjecture has been reproved many times (Okounkov-Pandharipande \cite{OP1},
Mirzakhani \cite{Mi},  Kim-Liu \cite{KiL}, Kazarian-Lando \cite{KaL},  
Chen-Li-Liu \cite{CLL}, Kazarian \cite{Ka}, Mulase-Zhang \cite{MuZ}, etc.).

\subsection{Moduli of stable maps} \label{sec:stable-maps}

Let $X$ be a nonsingular projective or quasi-projective variety over $\bC$,
and let $\beta \in H_2(X;\bZ)$. An $n$-pointed, genus $g$,  
degree $\beta$ prestable map to $X$ is 
a morphism $f:(C,x_1,\ldots,x_n)\to X$, where
$(C,x_1,\ldots,x_n)$ is an $n$-pointed, genus $g$
prestable curve, and $f_*[C]= \beta$. Two prestable maps
$$
f:(C,x_1,\ldots,x_n)\to X,\quad f':(C', x'_1,\ldots, x'_n)\to X
$$
are isomorphic if there exists an isomorphism
$\phi:(C,x_1,\ldots,x_n)\to (C',x'_1,\ldots,x'_n)$
of $n$-pointed prestable curves such that $f=f'\circ \phi$. A prestable map
$f:(C,x_1,\ldots,x_n)\to X$ is {\em stable} if its
automorphism group is finite. The notion of stable maps
was introduced by Kontsevich \cite{Ko2}.

The moduli space $\Mbar_{g,n}(X,\beta)$ 
of $n$-pointed, genus $g$, degree $\beta$ stable maps to $X$
is a Deligne-Mumford stack which is proper
when $X$ is projective \cite{BeMa}. 

\subsection{Obstruction theory and virtual fundamental classes} \label{sec:GWdeform}

The tangent space $T^1$ and the obstruction space $T^2$ at
a moduli point $[f:(C, x_1,\ldots, x_n)\to X] \in \Mbar_{g,n}(X,\beta)$ fit
in the {\em tangent-obstruction exact sequence}:
\begin{equation}\label{eqn:tangent-obstruction}
\begin{aligned}
0 \to& \Ext^0_{\cO_C}(\Omega_C(x_1+\cdots + x_n) , \cO_C)\to H^0(C,f^*T_X) \to T^1  \\
  \to& \Ext^1_{\cO_C}(\Omega_C(x_1+\cdots + x_n), \cO_C)\to H^1(C,f^*T_X)\to T^2 \to 0
\end{aligned}
\end{equation}
where
\begin{itemize}
\item $\Ext^0_{\cO_C}(\Omega_C(x_1+\cdots+x_n),\cO_C)$ is the space of infinitesimal automorphisms of the domain $(C, x_1,\ldots, x_n)$, 
\item $\Ext^1_{\cO_C}(\Omega_C(x_1+\cdots + x_n), \cO_C)$ is the space of infinitesimal 
deformations of the domain $(C, x_1, \ldots, x_n)$, 
\item $H^0(C,f^*T_X)$ is the space of
infinitesimal deformations of the map $f$, and 
\item $H^1(C,f^*T_X)$ is
the space of obstructions to deforming the map $f$.
\end{itemize}
$T^1$ and $T^2$ form sheaves $\cT^1$ and $\cT^2$ on the
moduli space $\MgX$.

Let $X$ be a nonsingular projective variety. 
We say $X$ is {\em convex} if $H^1(C,f^*TX)=0$ for all genus $0$ stable maps $f$.
Projective spaces $\bP^n$, or more generally, generalized flag varieties
$G/P$, are examples of convex varieties. When $X$ is convex and $g=0$,
the obstruction sheaf $\cT^2=0$, and the moduli space $\Mbar_{0,n}(X,\beta)$ is a {\em smooth} 
Deligne-Mumford stack.  

In general, $\MgX$ is a {\em singular} Deligne-Mumford stack equipped 
with a {\em perfect obstruction theory}: 
there is a two term complex of locally free sheaves $E \to F$  on $\Mbar_{g,n}(X,\beta)$  
such that
$$
0\to \cT^1 \to F^\vee \to E^\vee \to \cT^2 \to 0
$$
is an exact sequence of sheaves.
(See \cite{BeFa} for the complete definition of a perfect 
obstruction theory.)  The {\em virtual dimension} $d^\vir$ of 
$\Mbar_{g,n}(X,\beta)$ is the rank of the virtual tangent bundle 
$T^\vir = F^\vee - E^\vee$.
\begin{equation}\label{eqn:virtual-dim}
d^\vir = \int_\beta c_1(T_X) +(\dim X-3)(1-g) +n
\end{equation}

Suppose that $\MgX$ is {\em proper}. (Recall that if $X$
is projective then $\MgX$ is proper for any $g$, $n$, $\beta$.)
Then there is a {\em virtual fundamental class}
$$
[\MgX]^\vir\in A_{d^\vir}(\MgX).
$$ 
The virtual fundamental class has been constructed 
by Li-Tian \cite{LiTi1},  Behrend-Fantechi \cite{BeFa} 
in algebraic Gromov-Witten theory. The virtual
fundamental class allows us to define 
$$
\int_{[\MgX]^\vir}: A^*(\MgX) \longrightarrow \bQ,\quad
\alpha \mapsto  \deg(\alpha\cap[\MgX]^\vir).
$$

\subsection{Gromov-Witten invariants}\label{sec:GWinvariants}
Let $X$ be a nonsingular projective variety. 
Gromov-Witten invariants are rational numbers defined by 
applying
$$
\int_{[\MgX]^\vir}: A^*(\MgX)\to \bQ
$$
to certain classes in $A^*(\MgX)$.

Let $\ev_i:\Mbar_{g,n}(X,\beta)\to X$ be the evaluation
at the $i$-th marked point: $\ev_i$ sends
$[f:(C,x_1,\ldots,x_n)\to X]\in \Mbar_{g,n}(X,\beta)$ to
$f(x_i)\in X$. Given $\gamma_1,\ldots,\gamma_n \in A^*(X)$, define
\begin{equation}\label{eqn:primaryGW}
\langle \gamma_1,\ldots, \gamma_n\rangle^X_{g,\beta}
= \int_{ [\MgX]^\vir } \ev_1^*\gamma_1 \cup \cdots \cup \ev_n^*\gamma_n 
\in \bQ.
\end{equation}
These are known as the {\em primary} Gromov-Witten invariants of $X$.
More generally, we may also view $[\MgX]^\vir$ as a class in $H_{2d}(\MgX)$. 
Then \eqref{eqn:primaryGW} is defined for ordinary cohomology classes $\gamma_1,\ldots,\gamma_n\in H^*(X)$,
including odd cohomology classes which do not come from $A^*(\MgX)$.

Let $\pi:\Mbar_{g,n+1}(X,\beta)\to \Mbar_{g,n}(X,\beta)$ be the universal
curve. For $i=1,\ldots,n$, let $s_i:\Mbar_{g,n}(X,\beta)\to \Mbar_{g,n+1}(X,\beta)$, 
be the section which corresponds to the $i$-th marked point.
Let $\omega_\pi\to \Mbar_{g,n+1}(X,\beta)$ be the relative dualizing
sheaf of $\pi$, and let $\bL_i=s_i^*\omega_\pi$ be the line bundle
over $\Mbar_{g,n}(X,\beta)$ whose fiber at the
moduli point $[f:(C,x_1,\ldots,x_n)\to X]\in \Mbar_{g,n}(X,\beta)$
is the cotangent line $T^*_{x_i}C$ at the $i$-th marked point
$x_i$. The $\psi$-classes are defined to be
$$
\psi_i:=c_1(\bL_i)\in A^1(\MgX),\quad i=1,\ldots,n.
$$
We use the same notation $\psi_i$ to denote the
corresponding classes in the ordinary cohomology group  $H^2(\MgX)$.

The {\em descendant} Gromov-Witten invariants are defined by
\begin{equation}\label{eqn:descendantGW}
\langle \tau_{a_1}(\gamma_1)\cdots \tau_{a_n}(\gamma_n)\rangle_{g,\beta}^X
:= \int_{ [\MgX]^\vir } \ev_1^*\gamma_1\cup \psi_1^{a_1} \cup \cdots 
\cup \ev_n^*\gamma_n \cup \psi_n^{a_n} 
\in \bQ.
\end{equation}
Suppose that $\gamma_i\in H^{d_i}(X)$. Then \eqref{eqn:descendantGW} is zero unless
\begin{equation}\label{eqn:nonzero-condition}
\sum_{i=1}^n d_i + 2\sum_{i=1}^n a_i =2\Bigl(\int_\beta c_1(TX) + (\dim X-3)(1-g)+n\Bigr).
\end{equation}

\begin{remark} Note that
$$
\psi_i \neq \pi^* \psi_i.
$$
where the $\psi_i$ on the left  hand side is
an element in  $H^2(\Mbar_{g,n+1}(X,\beta))$, whereas the $\psi_i$ on the
right hand side is an element in $H^2(\Mbar_{g,n}(X,\beta))$.
Indeed, let $D_i \subset \Mbar_{g,n+1}(X,\beta)$
be the divisor associated to the section $s_i$. Then
$\psi_i = \pi^*\psi_i + [D_i]$ \cite[Section 2b]{Wi}.
\end{remark}

\section{Toric Varieties} \label{sec:toric-varieties}

In this section, we review geometry and topology of
nonsingular toric varieties. We refer to \cite{Fu2} for the theory 
of toric varieties.  We also introduce the toric graph of 
a nonsingular toric variety that satisfies some mild assumptions.
In Section \ref{sec:toricGW}, we will see that the toric graph contains 
all the information needed for computing Gromov-Witten
invariants and equivariant Gromov-Witten invariants of the toric variety.

\subsection{Basic notation}
Let $X$ be a smooth toric variety of dimension $r$. Then 
$X$ contains the algebraic torus $T=(\bC^*)^r$ as a dense open
subset, and the action of $T$ on itself extends to $X$. Let
$N=\Hom(\bC^*, T) \cong \bZ^r$ be the lattice of 1-parameter
subgroups of $T$, and let $M=\Hom(T,\bC^*)$ be the lattice of irreducible
characters of $T$. Then $M=\Hom(N,\bZ)$ is the dual lattice of $N$.
Let $N_\bR = N\otimes_\bZ \bR$ and $M_\bR = M\otimes_\bZ \bR$, so 
that they are dual real vector spaces of dimension $r$. 

The toric variety $X$ is defined by a fan $\Si\subset N_\bR$.
Let $\Si(k)$ be the set of $k$-dimensional cones in $\Si$.
A $k$-dimensional cone $\si\in \Si(k)$ corresponds
to an $(r-k)$-dimensional orbit closure $V(\si)$ of
the $T$-action on $X$.  We make the following assumption:

\begin{assumption}\label{assume}
\begin{itemize}
\item $\Si(r)$ is nonempty,
so that $X$ contains at least one fixed point.
\item Each $(r-1)$ dimensional cone $\tau\in \Si(r-1)$ is contained 
in at least one top dimensional cone $\si\in \Si(r)$.
\end{itemize}
\end{assumption}

We introduce some notation:
\begin{itemize}
\item Let $\{e_1,\ldots,e_r\}$ be a $\bZ$-basis of $N$, and let
$\{u_1,\ldots, u_r\}$ be the dual $\bZ$-basis of
$M=\Hom(N,\bZ)$: $\langle u_i,e_i\rangle =\delta_{ij}$.
\item Given linearly independent vectors $w_1,\ldots,w_k\in N$, define 
$$
\Cone (\{w_1,\ldots,w_k\})=\{t_1w_1+\cdots t_k w_k\mid t_1,\ldots,t_k\in \bR_{\geq 0}\}.
$$
We define $\Cone(\emptyset)=\{0\}$.
\end{itemize}

\begin{example}[$\bP^r$]\label{Pr-fan}
$N=\oplus_{i=1}^r \bZ  e_i$. Let
$$
v_i=e_i,\quad 1\leq i\leq r,\quad v_0=-e_1-\cdots-e_r.
$$
The projective space $\bP^r$ is a nonsingular projective toric variety 
of dimension $r$, defined by the fan 
$$
\Si=\{ \Cone(S)\mid S\subset \{ v_0,\ldots,v_r\}, |S| \leq r\}.
$$
\end{example}

\begin{example}[$\cO_{\bP^1}(-1)\oplus \cO_{\bP^1}(-1)$]\label{conifold-fan}
$N=\bZ e_1\oplus \bZ e_2\oplus \bZ e_3$. Define
$$
v_1=e_1,\quad v_2 =e_2,\quad v_3=e_3,\quad
v_4=e_1+e_2-e_3.
$$
Given $1\leq i_1<\cdots<i_k\leq 4$, define
$$
\si_{i_1\cdots i_k}= \Cone(\{v_{i_1},\ldots, v_{i_k}\}).
$$
The total space of $\cO_{\bP^1}(-1)\oplus \cO_{\bP^1}(-1)$ is a nonsingular
quasi-projective toric variety of dimension $3$, defined
by the fan
$$
\Si=\{ \{0\}, \si_1,\si_2,\si_3,\si_4, \si_{12},\si_{13},\si_{23},\si_{24},\si_{34},
\si_{123},\si_{234}\}.
$$
\end{example}

\subsection{One-skeleton}
The set of $T$ fixed points in $X$ is given by
$$
\{ p_\si:= V(\si):\si\in \Si(r)\}.
$$
The set of $1$-dimensional $T$ orbit closures in $X$ is given by
$$
\{ \ell_\tau:= V(\tau): \tau\in \Si(r-1)\}.
$$
Under our assumption, each $\ell_\tau$ is either an affine line $\bC$  
or a projective line $\bP^1$. We define
$$
\Si(r-1)_c = \{ \tau\in \Si(r-1)_c: \ell_\tau \cong \bP^1\}.
$$
Note that $\Si(r-1)_c =\Si(r-1)$ if $X$ is proper. We define
the 1-skeleton of $X$ to be the union of 1-dimensional
orbit closures:
\begin{equation}\label{eqn:one-skeleton}
X^1:=\bigcup_{\tau\in \Si(r-1)} \ell_\tau.
\end{equation}

We define the set of flags in $\Si$ to be 
\begin{eqnarray*}
F(\Si) & =& \{ (\tau,\si)\in \Si(r-1)\times \Si(r)\mid \tau\subset \si\}\\
&=& \{(\tau,\si)\in \Si(r-1)\times \Si(r)\mid p_\si \in\ell_\tau \}.
\end{eqnarray*}

\begin{example}[$\bP^r$]\label{Pr-flag}
We use the notation in Example \ref{Pr-fan}. Define
\begin{eqnarray*} 
&& \si_i= \Cone\{v_j\mid j\neq i \}, \quad i=0,\dots,r,\\
&& \tau_{ij}= \si_i\cap \si_j\in \Si(r-1),\quad 0\leq i<j\leq r.
\end{eqnarray*}
Then
\begin{eqnarray*}
&& \Si(r)=\{\si_i\mid i=0,\ldots,r\}\\
&& \Si(r-1)=\Si(r-1)_c=\{ \tau_{ij}\mid 0\leq i<j\leq r\}\\
&& F(\Si)=\{(\tau_{ij},\si_i)\mid 0\leq i<j\leq n\}
\cup\{(\tau_{ij},\si_j)\mid 0\leq i<j\leq r\}.
\end{eqnarray*}
\end{example}

\begin{example}[$\cO_{\bP^1}(-1)\oplus \cO_{\bP^1}(-1)$]\label{conifold-flag}
We use the notation in Example \ref{conifold-fan}.
$$
\Si(3)= \{ \si_{123},\si_{234}\},\quad
\Si(2)= \{ \si_{12},\si_{13},\si_{23},\si_{24},\si_{34}\},\quad
\Si(2)_c = \{ \si_{23}\}
$$
$$
F(\Si)=\{(\si_{12},\si_{123}),(\si_{13},\si_{123}), (\si_{23},\si_{123})
,(\si_{23},\si_{234}), (\si_{24},\si_{234}), (\si_{34},\si_{234})\}
$$
\end{example}

\subsection{Toric graph}\label{sec:toric-graph}
The sets $\Si(r)$, $\Si(r-1)$ and $F(\Si)$ define
a connected graph $\Up$.
Each top dimensional cone $\si\in \Si(r)$ corresponds to
a vertex $\bv(\si)$ in $\Up$. Each $(r-1)$ dimensional
cone $\tau\in \Si(r-1)$ corresponds to an edge 
$\be(\tau)$ in $\Up$; $\be(\tau)$ is a ray if $\ell_\tau\cong\bC$,
and is a line segment if $\ell_\tau\cong \bP^1$. 
The vertex $\bv(\si)$ is contained in
the edge $\be(\tau)$  if and only if the fixed point $p_\si$ is contained
in the (affine or projective) line $\ell_\tau$.

Given any top dimensional cone $\si\in \Si(r)$, define
the following subset of $\Si(r-1)$: 
$$
E_\si=\{ \tau\in \Si(r-1)\mid \tau\subset \si\}
=\{\tau\in \Si(r-1)\mid p_\si\in \ell_\tau\}. 
$$
Then $|E_\si| =n$. Therefore $\Up$ is an $r$-valent graph.

Given a flag $(\tau,\sigma)\in F(\Si)$, let 
$\bw(\tau,\sigma) \in M=\Hom(T,\bC^*)$
be the weight of $T$-action on $T_{p_\si} \ell_\tau$, the tangent
line to $\ell_\tau$ at the fixed point $p_\si$, namely,
$$
\bw(\tau,\sigma) := c_1^T(T_{p_\si}\ell_\tau) \in H^2_T(p_\si;\bZ)\cong M. 
$$
This gives rise to a map $\bw:F(\Si)\to M$ satisfying the following
properties.
\begin{enumerate}
\item Given any $\si\in \Si(r)$, the
set $\{\bw(\tau,\si)\mid \tau\in E_\si \}$ form
a $\bZ$-basis of $M$. These are the weights
of the tangent space $T_{p_\si}X$ to $X$ at the fixed point
$p_\si$.
\item Any $\tau\in \Si(r-1)_c$ is contained in two top dimensional
cones $\si,\si'\in \Si(r)$. 
\begin{enumerate}
\item $\bw(\tau,\sigma)+\bw(\tau,\sigma')=0$. 
\item Let $E_\si= \{ \tau_1,\ldots, \tau_r\}$, where
$\tau_r=\tau$.  For any $\tau_i\in E_\si$ there exists a unique
$\tau_i'\in E_{\si'}$ and $a_i\in \bZ$ such that
$$
\bw(\tau_i',\si') =\bw(\tau_i,\sigma)- a_i \bw(\tau,\si).
$$
In particular, $\tau'_r=\tau_r=\tau$ and $a_r=2$. 
\end{enumerate}

\end{enumerate}
Let $\tau$ be as in (2). The normal
bundle of $\ell_\tau \cong \bP^1$ in $X$ is given by
$$
N_{\ell_\tau/X}\cong L_1\oplus\cdots \oplus  L_{n-1}
$$
where $L_i$ is a degree $a_i$ $T$-equivariant line bundle
over $\ell_\tau$ such that the weights of the
$T$-actions on the fibers $(L_i)_{p_\si}$ and $(L_i)_{p_{\si'} }$
are $\bw(\tau_i,\si)$ and $\bw(\tau_i',\sigma')$, respectively.

\begin{example}[$\bP^r$]
In notation in Example \ref{Pr-flag},
\begin{eqnarray*}
&& \bw(\si_{0j},\si_0) = -\bw(\si_{0j},\si_j)= u_j,\quad j=1,\ldots,r,\\
&& \bw(\si_{ij},\si_i) = -\bw(\si_{ij},\si_j) = u_j-u_i,\quad 1\leq i<j\leq r.
\end{eqnarray*}
\end{example}

\begin{example}[$\cO_{\bP^1}(-1)\oplus \cO_{\bP^1}(-1)$]
In notation in Example \ref{conifold-fan} and Example \ref{conifold-flag},
\begin{eqnarray*}
&& \bw(\si_{23},\si_{123}) = u_1,\quad
\bw(\si_{13},\si_{123})=u_2,\quad
\bw(\si_{12},\si_{123})=u_3,\\
&& \bw(\si_{23}, \si_{234})= -u_1,\quad
\bw(\si_{24},\si_{234}) = u_1+u_3,\quad
\bw(\si_{34},\si_{234}) = u_1+u_2.
\end{eqnarray*}
\end{example}

We now give another interpretations of the weight
$\bw(\tau,\si)$ associated to a flag $(\tau,\sigma)\in F(\Ga)$. 
There is a unique $\rho\in \Si(1)$ such that
$\rho\subset \sigma$ and $\rho\not\subset \tau$.
$D_\rho:=V(\rho)$ is a $T$-invariant divisor which
intersects the $T$-invariant (affine or projective) line 
$\ell_\tau$ transversally at the $T$-fixed point $p_\si$. Then
$\bw(\tau,\si)$ is the weight of the $T$-action
on $\cO(D_\rho)_{p_\si}$, i.e.
$$
\bw(\tau,\si)=c_1^T(\cO(D_\rho)_{p_\si}).
$$

The formal completion $\hat{X}$ of $X$ along $X^1$, together with the $T$-action,
can be reconstructed from the graph $\Up$ and $\bw:F(\Si)\to M$. 
We call $(\Up,\bw)$ the {\em  toric graph} defined by $\Si$. 

\subsection{Induced  torus action}
Suppose that there is a group homomorphism
$\phi:T'\to T$ from another
torus $T'\cong (\bC^*)^s$ to $T\cong (\bC^*)^r$.
Then $T'$ acts on $X$ by 
$$
t'\cdot x = \phi(t')\cdot x,\quad t'\in T,\ x\in X.
$$

The group homomorphism $\phi:T'\to T$ induces group homomorphisms
\begin{eqnarray*}
&&\phi_*:  N'=\Hom(\bC^*, T')\longrightarrow  \Hom(\bC^*,T)\\
&&\phi^*:  M = \Hom(T,\bC^*)\longrightarrow \Hom(T',\bC^*)
\end{eqnarray*}

An important example is the big torus $\tT=(\bC^*)^s$ coming 
from the geometric quotient, where $s = |\Si(1)| \geq r$.
Let $I_\Si$ be the ideal of $\bC[z_1,\ldots, z_s]$ generated by
$$
\{ \prod_{\rho_i\not \subset \sigma} z_i :\sigma\in \Si\},
$$
and let $Z(I_\Si)$ be the closed subscheme of $\bC^s$ defined by $I_\Si$. Then
$$
X =(\bC^s-Z(I_\Si))/(\bC^*)^{s-r}.
$$
Let $\Si(1)=\{\rho_1,\ldots,\rho_s\}$. For
each $\rho_\alpha$ there exists a unique primitive vector $v_\alpha\in N$ such that
$\rho_\alpha \cap N = \bZ_{\geq 0}v_\alpha $. 
The group homomorphism
$$
\phi_*: \tN=\bigoplus_{\alpha=1}^s \bZ \te_\alpha \longrightarrow
N=\bigoplus_{i=1}^r\bZ e_i,\quad
\te_\alpha\mapsto v_\alpha 
$$
induces group homomorphisms
$$
\phi: \tT=(\bC^*)^s\longrightarrow T=(\bC^*)^r,\quad
(\tilde{t}_1,\ldots,\tilde{t}_s)
\mapsto (\prod_{\alpha=1}^s \tilde{t}_\alpha^{\langle u_1,v_\alpha\rangle},\ldots,
\prod_{\alpha=1}^s \tilde{t}_\alpha^{\langle u_r, v_\alpha\rangle})
$$
and
$$
\phi^*: M=\bigoplus_{i=1}^r\bZ u_i\longrightarrow \tM=\bigoplus_{\alpha=1}^s \bZ \tu_\alpha,\quad
u_i\mapsto \sum_{\alpha=1}^s\langle u_i,v_\alpha \rangle \tu_\alpha. 
$$

\begin{example}[$\bP^r$] $\bP^r=(\bC^{r+1}-\{0\})/\bC^*$.
The group homomorphism
$$
\phi_*: \tN=\bigoplus_{i=0}^r \bZ \te_i\longrightarrow N=\bigoplus_{i=1}^r \bZ e_i,\quad \te_i\mapsto v_i,
$$
induces group homomorphisms
$$
\phi:\tT= (\bC^*)^{r+1}\to T= (\bC^*)^r,\quad
(\tilde{t}_0,\ldots,\tilde{t}_r)\mapsto 
(\tilde{t}_1 \tilde{t}_0^{-1},\quad, \tilde{t}_r \tilde{t}_0^{-1}). 
$$
and
$$
\phi^*: M=\bigoplus_{i=1}^r \bZ u_i \longrightarrow 
\tM=\bigoplus_{i=0}^r \bZ \tu_i,\quad u_i\mapsto \tu_i-\tu_0,
\quad i=1,\ldots,r.
$$
We have
$$
\phi^*\circ \bw(\tau_{ij},\si_i)=-\bw(\tau_{ij},\si_j)= \tu_j-\tu_i,\quad
0\leq i<j\leq r.
$$
\end{example}

\subsection{Cohomology and equivariant cohomology}
Let $X$ be a smooth toric variety of dimension $r$ defined by a fan $\Si$.
Let $\Si(1)=\{\rho_1,\ldots,\rho_s\}$, and let
$v_\alpha \in N$ be the unique primitive vector such
that $\rho_\alpha\cap N=\bZ_{\geq 0}v_\alpha$. Let
$D_\alpha=V(\rho_\alpha)$.

Given  $\sigma\in \Si(k)$, the scheme theoretic intersection
of toric subvarieties $D_\alpha$ and $V(\si)$ is given by
\begin{equation}\label{eqn:intersect}
D_\alpha\cap V(\si)=\begin{cases}
V(\gamma) & \textup{if $\si$ and $v_\alpha$ span the cone $\gamma \in \Si(k+1)$},\\
\emptyset & \textup{if $\si$ and $v_\alpha$ do not span a cone in $\Si$}.
\end{cases}
\end{equation}

Now assume that $X$ is projective, so that $V(\si)$ is projective for
all $\si\in \Si$. Given a $k$-dimensional cone $\si\in \Si(k)$,
let $[V(\si)]\in H^{2k}(X)$ be the Poincar\'{e} dual of the homology class represented
by $V(\si)$, and let $[V(\si)]^T\in H^{2k}_T(X)$ be the equivariant Poincar\'{e} dual of the 
$T$-equivariant homology class represented by the $T$-invariant subvariety 
$V(\si)$. Then $A^k(X)=H^{2k}(X)$ is generated, as a $\bQ$-vector space, by  
$\{[V(\si)]\mid \si\in \Si(k)\}$, and  $A^k_T(X)=H^{2k}_T(X)$ is generated by
$\{[V(\si)]^T\mid \si\in \Si(k)\}$. We have
\begin{enumerate}
\item[(i)] If $v_{i_1},\ldots, v_{i_k}$ do not span a cone of $\Si$ then
\begin{eqnarray*}
&& [D_{i_1}]\cup \cdots \cup [D_{i_k}]=0\in H^{2k}(X),\\
&& [D_{i_1}]^T\cup \cdots \cup [D_{i_k}]^T=0\in H^{2k}_T(X).
\end{eqnarray*}
\item[(ii)]  For any $u\in M\subset H^2_T(X)$,  
$$
\sum_{\alpha=1}^s \langle u,v_\alpha\rangle [D_\alpha]=0\in H^2(X),
\quad \sum_{\alpha=1}^s\langle u,v_\alpha\rangle [D_\alpha]^T = u\in H^2_T(X).
$$
\end{enumerate}
The above (i) follows from \eqref{eqn:intersect}. To see (ii), 
let $\chi^u:T\to \bC^*$ be the character which corresponds
to $u\in M$. Then $\chi^u$  is a rational function 
on $X$ which defines a $T$-invariant principal divisor 
$\sum_{\alpha=1}^s\langle u,v_\alpha\rangle [D_\alpha]^T$. 
Relations (i) and (ii) are essentially all the relations in 
$H^*(X)$ or $H^*_T(X)$.
\begin{definition}
\begin{enumerate}
\item Let $I$ be the ideal in $\bQ[X_1,\ldots,X_s]$ generated by 
the monomials 
$\{ X_{i_1}\cdots X_{i_k}\mid v_{i_1},\ldots,v_{i_k}\textup{ do not generate a cone in $\Si$}\}$.
\item Let $J$ be the ideal in $\bQ[X_1,\ldots,X_s]$ generated by
$\{ \sum_{\alpha=1}^s \langle u,v_\alpha\rangle X_\alpha \mid u\in M\}$.
\item Let $I'$ be the ideal in $R_T[X_1,\ldots,X_s]= \bQ[X_1,\ldots,X_s, u_1,\ldots,u_r]$
generated by the monomials
$\{ X_{i_1}\cdots X_{i_k}\mid v_{i_1,\ldots,i_k}\textup{ do not generate a cone in $\Si$}\}$.
\item Let $J'$ be the ideal in $R_T[X_1,\ldots,X_s]= \bQ[X_1,\ldots,X_s, u_1,\ldots,u_r]$
generated by $\{ \sum_{\alpha=1}^s \langle u,v_\alpha\rangle X_\alpha-u \mid u\in M\}$.
\end{enumerate}
\end{definition}

With all the above definitions, the cohomology and equivariant
cohomology rings of $X$ can be describe explicitly as follows. (See for example
\cite[Section 5.2]{Fu2}, \cite[Lecture 14]{Fu3}.) 
\begin{theorem}
\begin{eqnarray*}
H^*(X)&\cong& \bQ[X_1,\ldots,X_s]/ (I+J).\\
H^*_T(X)&\cong & \bQ[X_1,\ldots,X_s,u_1,\ldots,u_r]/ (I'+J')\cong
\bQ[X_1,\ldots,X_s]/I.
\end{eqnarray*}
The isomorphism is given by $X_\alpha\mapsto [D_\alpha]$ or $[D_\alpha]^T$.
\end{theorem}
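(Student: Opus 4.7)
My plan is to prove the equivariant statement first and then deduce the non-equivariant one by specializing $u_1,\ldots,u_r\mapsto 0$. The first step is to check the second isomorphism $\bQ[X_1,\ldots,X_s,u_1,\ldots,u_r]/(I'+J')\cong \bQ[X_1,\ldots,X_s]/I$ purely as a piece of commutative algebra. Since $\Si(1)$ spans $N_\bR$ (a consequence of Assumption \ref{assume}, as the rays of any top-dimensional cone already span), the map $M\to \bQ^s$, $u\mapsto(\langle u,v_\alpha\rangle)_\alpha$ is injective, so the relations in $J'$ allow us to solve uniquely for each $u_i$ as the linear combination $\sum_\alpha \langle u_i,v_\alpha\rangle X_\alpha$. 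This identification sends $I'$ to $I$, giving the claimed isomorphism of rings.

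Next I construct a ring homomorphism
\[
\Phi\colon \bQ[X_1,\ldots,X_s,u_1,\ldots,u_r]/(I'+J')\longrightarrow H^*_T(X),\qquad X_\alpha\mapsto [D_\alpha]^T,\quad u_i\mapsto u_i,
\]
and verify it is well-defined: the Stanley--Reisner relations in $I'$ follow from \eqref{eqn:intersect} applied iteratively to $D_{i_1}\cap\cdots\cap D_{i_k}$, while the linear relations in $J'$ encode the principal divisor $\mathrm{div}(\chi^u)=\sum_\alpha \langle u,v_\alpha\rangle D_\alpha$ together with the fact that in equivariant cohomology this principal class equals $u\in M\subset H^2_T(X)$ (coming from the character $\chi^u$ thought of as an equivariant rational section of $\cO_X$). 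The surjectivity of $\Phi$ reduces to the claim that every class $[V(\si)]^T$ lies in the image, which I prove by induction on $\dim \si$: if $\si\in \Si(k)$ is spanned by $v_{\alpha_1},\ldots,v_{\alpha_k}$, then \eqref{eqn:intersect} plus transversality of the toric intersections yields $[V(\si)]^T=[D_{\alpha_1}]^T\cup\cdots\cup [D_{\alpha_k}]^T$. Combined with the classical description of $H^*_T(X)$ as generated by Chern classes of equivariant line bundles and elements of $M$, this gives surjectivity.

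The main obstacle is injectivity. For this I will use that $X$ is \emph{equivariantly formal}: the torus orbits $O_\si$ form a cell decomposition (the toric/Bialynicki--Birula stratification) with only even real-dimensional cells, so $H^{\mathrm{odd}}(X)=0$ and consequently the Leray--Serre spectral sequence of $X_T\to BT$ degenerates, giving an isomorphism of $R_T$-modules
\[
H^*_T(X)\;\cong\; H^*(X)\otimes_\bQ R_T.
\]
In particular $H^*_T(X)$ is a free $R_T$-module of rank $|\Si(r)|$. I then compare this with the source of $\Phi$: by equivariant localization at the isolated fixed points $\{p_\si\mid \si\in \Si(r)\}$, the image of $\Phi$ already generates a rank $|\Si(r)|$ free $R_T$-submodule, since $i_\si^*[D_\alpha]^T$ is computable from the weights $\bw(\tau,\si)$ in the toric graph and the restriction map $H^*_T(X)\otimes Q_T\to \bigoplus_\si H^*_T(p_\si)\otimes Q_T$ is an isomorphism. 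Comparing ranks (or, equivalently, computing a Hilbert series using that $\bQ[X_1,\ldots,X_s]/I'$ is a free $R_T$-module via the Stanley--Reisner theory for the fan $\Si$), one concludes $\Phi$ is an isomorphism.

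Finally, to deduce the non-equivariant presentation, I apply $-\otimes_{R_T}\bQ$ (setting $u_1=\cdots=u_r=0$) to the equivariant isomorphism. Equivariant formality identifies $H^*_T(X)\otimes_{R_T}\bQ$ with $H^*(X)$, while on the algebraic side the quotient of $\bQ[X_1,\ldots,X_s,u_1,\ldots,u_r]/(I'+J')$ by the ideal $(u_1,\ldots,u_r)$ equals $\bQ[X_1,\ldots,X_s]/(I+J)$, since reducing $J'$ modulo $(u_1,\ldots,u_r)$ produces exactly the linear relations generating $J$. This yields the desired $H^*(X)\cong \bQ[X_1,\ldots,X_s]/(I+J)$, with $X_\alpha\mapsto [D_\alpha]$, completing the proof.
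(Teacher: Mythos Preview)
The paper does not actually prove this theorem: it states the result as known and refers the reader to \cite[Section 5.2]{Fu2} and \cite[Lecture 14]{Fu3}. Your outline is a correct and standard way to establish it, in line with what one finds in those references. Two minor comments. First, for surjectivity you do not need an external ``classical description'' of $H^*_T(X)$: the paper has already asserted, in the paragraph preceding the definition of $I,J,I',J'$, that $H^{2k}_T(X)$ is spanned by the classes $[V(\si)]^T$ for $\si\in\Si(k)$, and your identity $[V(\si)]^T=[D_{\alpha_1}]^T\cup\cdots\cup[D_{\alpha_k}]^T$ coming from \eqref{eqn:intersect} then yields surjectivity directly. Second, for injectivity the cleanest phrasing of your idea is simply that both source and target are free $R_T$-modules of the same finite rank $|\Si(r)|$ (the source by Cohen--Macaulayness of the Stanley--Reisner ring of a complete simplicial fan, the target by equivariant formality), and a surjective homomorphism between free modules of equal finite rank over a commutative Noetherian ring is automatically injective; the localization-to-fixed-points argument you sketch is an equivalent route to the same conclusion.
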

The ring $\bQ[X_1,\ldots,X_s]/I$ is known as the Stanley-Reisner ring.
The ring homomorphism 
$$
i_X^*:H^*_T(X)=\bQ[X_1,\ldots,X_s, u_1,\ldots,u_r]/(I'+J')\to 
H^*(X)=\bQ[X_1,\ldots,X_s]/(I+J)
$$
is surjective. The kernel is the ideal generated by
$u_1,\ldots, u_r$.  We say $\gamma^T\in H_T^*(X)$ is 
a $T$-equivariant lift of $\gamma\in H^*(X)$ if
$i_X^*(\gamma^T)=\gamma$.

\begin{example}[$\bP^r$]
\begin{eqnarray*}
H^*(\bP^r)&\cong & \bQ[X_0,\ldots, X_r]/\langle X_0\cdots X_r, X_1-X_0,\ldots, X_r-X_0\rangle
\cong\bQ[X]/\langle X^{r+1}\rangle.\\
H^*_T(\bP^r)&\cong & \bQ[X_0,\ldots,X_r, u_1,\ldots,u_r]/\langle
X_0\cdots X_r, X_1-X_0-u_1,\ldots, X_r-X_0-u_r\rangle\\
&=& \bQ[X,u_1,\ldots,u_r]/\langle X(X+u_1)\cdots (X+u_r)\rangle.
\end{eqnarray*}
\end{example}

\section{Gromov-Witten Invariants of Smooth Toric Varieties}\label{sec:toricGW}

Let $X$ be a nonsingular toric variety of dimension $r$.
Then $T=(\bC^*)^r$ acts on $X$, and acts on $\MgX$ by
$$
t\cdot [f:(C, x_1,\ldots,x_n)\to X]\mapsto [t\cdot f:(C,x_1,\ldots,x_n)\to X]
$$
where $(t\cdot f)(z)= t\cdot f(z)$, $z\in \bC$. The evaluation maps
$\ev_i:\MgX\to X$ are $T$-equivariant and induce
$\ev_i^*:A^*_T(X)\to A^*_T(\MgX)$.

\subsection{Equivariant Gromov-Witten invariants}
Suppose that $\MgX$ is proper, so that there are virtual fundamental classes
$$
[\MgX]^\vir \in A_{d^\vir}(\MgX),\quad [\MgX]^{\vir,T} \in A_{d^\vir}^T(\MgX),
$$
where
$$
d^\vir= \int_\beta c_1(TX)+(r-3)(1-g)+n.
$$
Given $\gamma_i\in A^{d_i}(X)=H^{2d_i}(X)$ and $a_i\in \bZ_{\geq 0}$, define
$\langle \tau_{a_i}(\gamma_1)\cdots \tau_{a_n}(\gamma_n)\rangle^X_{g,\beta}$ 
as in Section \ref{sec:GWinvariants}:
\begin{equation}\label{eqn:nonequivariantGW}
\langle\tau_{a_1}(\gamma_1)\cdots \tau_{a_n}(\gamma_n)\rangle^X_{g,\beta}
=\int_{[\MgX]^\vir}\prod_{i=1}^n \left(\ev_i^*\gamma_i\cup \psi_i^{a_i}\right) \in \bQ.
\end{equation}
By definition, \eqref{eqn:nonequivariantGW} is zero unless
$\sum_{i=1}^n d_i = d^\vir$. In this case,
\begin{equation}\label{eqn:alphaT}
\langle \tau_{a_1}(\gamma_1)\cdots \tau_{a_N}(\gamma_n)\rangle^X_{g,\beta}
=\int_{[\MgX]^{\vir,T}}\prod_{i=1}^n \left(\ev_i^*\gamma_i^T\cup (\psi_i^T)^{a_i}\right)
\end{equation}
where $\gamma_i^T\in A^{d_i}_T(X)$ is any $T$-equivariant lift of $\gamma_i\in A^{d_i}(X)$,
and $\psi_i^T\in A^1_T(\MgX)$ is any $T$-equivariant lift of $\psi_i\in A^1(\MgX)$.

In this section, we fix a choice of $\psi_i^T$ as follows. 
A stable map $f:(C,x_1,\ldots,x_n)\to X$ induces $\bC$-linear maps
$T_{x_i}C\to T_{f(x_i)}X$ for $i=1,\ldots,n$. This gives rise to
$\bL_i^\vee \to \ev_i^*TX$. The $T$-action on $X$ induces a $T$-action on $TX$, so that
$TX$ is a $T$-equivariant vector bundle over $X$, and $\ev_i^*TX$ is a $T$-equivariant
vector bundle over $\MgX$. Let $T$ act on $\bL_i$ such that
$\bL_i^\vee\to \ev_i^*TX$ is $T$-equivariant, and define
$$
\psi_i^T=c_1^T(\bL_i)\in A^1_T(\MgX), \quad i=1,\ldots,n.
$$
Then $\psi_i^T$ is a $T$-equivariant lift of $\psi_i=c_1(\bL_i)\in A^1(\MgX)$.

Given $\gamma_i^T\in A_T^{d_i}(X)$, 
we define equivariant Gromov-Witten invariants
\begin{equation}\label{eqn:equivariantGW}
\begin{aligned}
\langle\tau_{a_1}(\gamma_1^T),\cdots,\tau_{a_n}(\gamma_n^T)\rangle_{g,\beta}^{X_T}
& :=\int_{[\MgX]^{\vir,T}} \prod_{i=1}^n \left(\ev_i^*\gamma_i^T (\psi_i^T)^{a_i}\right)\\
&\in \bQ[u_1,\ldots,u_l](\sum_{i=1}^n d_i-d^\vir).
\end{aligned}
\end{equation}
where $\bQ[u_1,\ldots,u_l](k)$ is the space of degree $k$ homogeneous polynomials in 
$u_1,\ldots,u_l$ with rational coefficients. In particular, 
$$
\langle\tau_{a_1}(\gamma_1^T),\cdots,\tau_{a_n}(\gamma_n^T)\rangle_{g,\beta}^{X_T}
=\begin{cases}
0, & \sum_{i=1}^n d_i <d^\vir,\\
\langle\tau_{a_1}(\gamma_1),\cdots,\tau_{a_n}(\gamma_n)\rangle_{g,\beta}^X\in \bQ, &
\sum_{i=1}^n d_i = d^\vir.
\end{cases}
$$
where $\gamma_i=i_X^*\gamma_i^T\in A^{d_i}(X)$. (Recall that
$i_X: X\to X_T$ is the inclusion of a fiber of $X_T\to BT$.)

In this section, we will compute the 
equivariant Gromov-Witten invariants \eqref{eqn:equivariantGW}
by localization. Section \ref{sec:graph-notation}
-- Section \ref{sec:each-graph} below are mostly straightforward generalizations 
of the $\bP^r$ case discussed in \cite{Ko2}  (genus $0$),
and \cite[Section 4]{GrPa}, \cite[Section 4]{Be2}
(higher genus). H. Spielberg derived a formula of genus 0 Gromov-Witten 
invariants of smooth toric varieties in his thesis \cite{Sp}. 
See also \cite[Chapter 27]{HKKPTVVZ}.

Let $\MgX^T\subset \MgX$ be the substack of $T$ fixed points, 
and let $i:\MgX^T\to \MgX$ be the inclusion.
Let $N^\vir$ be the virtual normal bundle of substack 
$\MgX^T$ in $\MgX$; in general, $N^\vir$ has different
ranks on different connected components of $\MgX^T$.  
By virtual localization,
\begin{equation}
\int_{[\MgX]^{\vir,T}} \prod_{i=1}^n\left(\ev_i^*\gamma_i^T\cup (\psi_i^T)^{a_i}\right) 
=\int_{[\MgX^T]^{\vir,T}}\frac{i^*\prod_{i=1}^n\left(\ev_i^*\gamma_i^T\cup (\psi_i^T)^{a_i}\right)}{e^T(N^\vir)}.
\end{equation}

Indeed, we will see that $\MgX^T$ is proper even when $\MgX$ is not. When $\MgX$ is not proper, we
{\em define} 
\begin{equation}\label{eqn:residue}
\langle \tau_{a_1}(\gamma_1^T),\ldots,\tau_{a_n}(\gamma_n^T)\rangle^X_{g,\beta}
=\int_{[\MgX^T]^{\vir,T}}\frac{i^*\prod_{i=1}^n\left(\ev_i^*\gamma_i^T\cup (\psi_i^T)^{a_i}\right)}{e^T(N^\vir)}\in  Q_T.
\end{equation}
When $\MgX$ is not proper, the right hand side of \eqref{eqn:residue} is a rational function
(instead of a polynomial) in $u_1,\ldots,u_r$. It can be nonzero when
$\sum d_i< d^\vir$, and does not have a nonequivariant limit (obtained by setting $u_i=0$) 
in general.

\subsection{Torus fixed points and graph notation}\label{sec:graph-notation}
In this subsection, we describe the $T$-fixed points in 
$\MgX$. Following Kontsevich \cite{Ko2}, given 
a stable map $f:(C,x_1,\ldots,x_n)\to X$ such that
$$
[f:(C,x_1,\ldots,x_n)\to X] \in \MgX^T,
$$
we will associate a decorated graph  $\vGa$.

We first give a formal definition.
\begin{definition}\label{df:GgX}
A decorated graph $\vGa=(\Ga, \vf, \vd, \vg, \vs)$
for $n$-pointed, genus $g$, degree $\beta$ 
stable maps to $X$  consists of the following data.

\begin{enumerate}
\item $\Ga$ is a compact, connected 1 dimensional CW complex. 
We denote the set of vertices (resp. edges) in $\Ga$ 
by $V(\Ga)$ (resp. $E(\Ga)$). Let
$$
F(\Gamma)=\{ (e,v)\in E(\Ga)\times V(\Ga)\mid v\in e\}
$$
be the set of flags in $\Gamma$. 
\item The {\em label map} $\vf: V(\Ga)\cup E(\Ga)\to \Si(r)\cup \Si(r-1)_c$
sends a vertex $v\in V(\Ga)$ to 
a top dimensional cone $\si_v \in \Si(r)$, and 
sends an edge $e\in E(\Ga)$ to
an $(r-1)$-dimensional cone $\tau_e \in \Si(r-1)_c$. 
Moreover, $\vf$ defines a map from the graph $\Ga$
to the graph $\Up$: if $(e,v)$ is a flag
in $\Gamma$ then $(\be(\tau_e), \bv(\si_v))$
is a flag in $\Up$, or equivalently,
$(\tau_e,\si_v)\in F(\Si)$.

\item The {\em degree map} $\vd:E(\Ga)\to \bZ_{>0}$
sends an edge $e\in E(\Ga)$ to a positive
integer $d_e$.

\item The {\em genus map} $\vg:V(\Ga)\to \bZ_{\geq 0}$
sends a vertex $v\in V(\Ga)$ to a nonnegative
integer $g_v$.

\item The {\em marking map} $\vs: \{1,2,\ldots,n\}\to V(\Ga)$
is defined if $n>0$.

\end{enumerate} 

The above maps satisfy the following two constraints:
\begin{enumerate}
\item[(i)] (topology of the domain)
$\displaystyle{\sum_{v\in V(\Ga)} g_v +  |E(\Ga)| - |V(\Ga)| +1 = g}$.
\item[(ii)] (topology of the map)
$\displaystyle{ \sum_{e\in E(\Ga)} d_e[\ell_{\tau_e}] =\beta}$.
\end{enumerate}

Let $\GgX$ be the set of all decorated graphs
$\vGa=(\Ga,\vf, \vd,\vg,\vs)$ satisfying the above
constraints.
\end{definition}

We now describe the geometry and combinatorics of
a stable map $f:(C,x_1,\ldots,x_n)\to X$ which represents
a $T$ fixed point in $\MgX$. 

For any $t\in T$, there exists 
an automorphism $\phi_t:(C,x_1,\ldots,x_n)$ such that
$t\cdot f(z)= f\circ\phi_t(z)$ for any $z\in C$. 
Let $C'$ be an irreducible component of $C$, and let
$f'=f|_{C'}:C'\to X$. There are two possibilities:
\begin{enumerate}
\item[Case 1:] $f'$ is a constant map, and 
$f(C')=\{ p_\si\}$, where $p_\si$ is a fixed point
in $X$ associated to some $\si\in \Si(r)$ 
\item[Case 2:] $C'\cong \bP^1$ and $f(C') =\ell_\tau$, where
$\ell_\tau$ is a $T$-invariant $\bP^1$ in $X$
associated to some $\tau\in \Si(r-1)_c$.
\end{enumerate}

We define a decorated graph $\vGa$ associated
to $f:(C,x_1,\ldots,x_n)\to X$ as follows.
\begin{enumerate}
\item (Vertices) We assign a vertex $v$ to each connected component
$C_v$ of $f^{-1}(X^T)$.
\begin{enumerate}
\item (label) $f(C_v)=\{p_\si\}$
for some top dimensional cone $\si\in \Si(r)$;
we define $\vf(v)=\si_v=\si$. 

\item (genus) $C_v$ is a curve or a point. If $C_v$ is a curve
then we define $\vg(v)=g_v$ to be the arithmetic
genus of $C_v$; if $C_v$ is a point then we define
$\vg(v)=g_v=0$. 
\item (marking) For $i=1,\ldots,n$, define
$\vs(i)=v$ if $x_i\in C_v$.
\end{enumerate}

\item (Edges)
For any $\tau\in \Si(r-1)$, let 
$O_\tau \cong \bC^*$ be the 1-dimensional
orbit whose closure is $\ell_\tau$. 
Then
$$
X^1\setminus X^T = \bigsqcup_{\tau\in \Si(r-1)} O_\tau
$$
where the right hand side is a disjoint union of connected components.
We assign an edge $e$ to each connected component
$O_e\cong \bC^*$ of $f^{-1}(X^1\setminus X^T)$. 
\begin{enumerate}
\item (label) Let
$C_e\cong\bP^1$ be the closure of $O_e$. Then
$f(C_e)=\ell_\tau$ for some $\tau$ in $\Si(r-1)_c$; we define
$\vf(e)=\tau_e=\tau$.
\item (degree) We define $\vd(e)=d_e$ to be the degree
of the map $f|_{C_e}: C_e\cong \bP^1\to \ell_\tau\cong \bP^1$. 
\end{enumerate}

\item (Flags) The set of flags in the graph $\Ga$ is defined by
$$
F(\Ga)=\{(e,v)\in E(\Ga)\times V(\Ga)\mid C_e\cap C_v\neq \emptyset\}.
$$
\end{enumerate}
The above (1), (2), (3) define a decorated graph $\vGa=(\Ga, \vf,\vd,\vg,\vs)$ satisfying
the constraints (i) and (ii) in Definition \ref{df:GgX}. 
Therefore $\vGa\in \GgX$.  This gives a map
from $\MgX^T$ to the discrete set $\GgX$. 
Let $\cF_\vGa\subset \MgX^T$ denote the preimage of $\vGa$.
Then
$$
\MgX^T=\bigsqcup_{\vGa\in \GgX}\cF_\vGa
$$
where the right hand side is a disjoint union of connected components.
We next describe the fixed locus $\cF_\vGa$ associated to
each decorated graph $\vGa\in \GgX$. For later convenience,
we introduce some definitions.

\begin{definition}\label{df:unstable}
Given a vertex $v\in V(\Ga)$, we define
$$
E_v=\{e\in  E(\Ga)\mid (e,v)\in F(\Ga)\}, 
$$
the set of edges emanating from $v$, and define $S_v=\vs^{-1}(v)\subset \{1,\ldots,n\}$. 
The valency of $v$ is given by $\val(v)= |E_v|$. Let $n_v=|S_v|$
be the number of marked points contained in $C_v$.
We say a vertex is {\em stable} if
$2g_v-2 + \val(v) + n_v>0$. Let $V^S(\Ga)$ be the set of
stable vertices in $V(\Ga)$. There are three types of 
unstable vertices:
\begin{eqnarray*}
V^1(\Ga)&=&\{ v\in V(\Ga)\mid g_v=0, \val(v)=1, n_v=0\},\\
V^{1,1}(\Ga)&=& \{ v\in V(\Ga)\mid g_v=0, \val(v)=n_v=1\},\\
V^2(\Ga)&=& \{ v\in V(\Ga)\mid g_v=0, \val(v)=2, n_v=0\}.
\end{eqnarray*}
Then $V(\Ga)$ is the disjoint union
of $V^1(\Ga)$, $V^{1,1}(\Ga)$, $V^2(\Ga)$, and $V^S(\Ga)$.

The set of stable flags is defined to be
$$
F^S(\Gamma) = \{(e,v)\in F(\Ga)\mid  v\in V^S(\Ga)\}.
$$
\end{definition}

Given a decorated graph $\vGa=(\Ga,\vf,\vd,\vg,\vs)$, the
curves $C_e$ and the maps $f|_{C_e}:C_e\to \ell_{\tau_e}\subset X$
are determined by $\vGa$.  If $v\notin V^S(\Ga)$ then
$C_v$ is a point. If $v\in V^S(\Ga)$ then $C_v$ is a curve,
and $y(e,v):= C_e\cap C_v$ is a node of $C$ for $e\in E_v$.
$$
\bigl(C_v, \{ y(e,v): e\in E_v\} \cup \{ x_i\mid i\in S_v\} \bigr)
$$ 
is a $(\val(v)+n_v)$-pointed, genus $g_v$ curve, which represents
a point in $\Mbar_{g_v,\val(v)+n_v}$. We call this moduli space
$\Mbar_{g_v,E_v\cup S_v}$ instead of $\Mbar_{g_v, \val(v)+n_v}$
because we would like to label the marked points
on $C_v$ by $E_v\cup S_v$ instead of $\{ 1,2, \ldots, \val(v)+n_v\}$.
Then
$$
\cM_{\vGa}=\prod_{v\in V^S(\Ga)}\Mbar_{g_v, E_v\cup S_v}.
$$
The automorphism group $A_{\vGa}$
for any point $[f:(C,x_1,\ldots,x_n)\to X]\in \cF_{\vGa}$
fits in the following short exact sequence of groups: 
$$
1\to \prod_{e\in E(\Ga)} \bZ_{d_e} \to A_{\vGa}\to \Aut(\vGa)\to 1
$$
where $\bZ_{d_e}$ is the automorphism group
of the degree $d_e$ morphism
$$
f|_{C_e}:C_e\cong \bP^1\to \ell_{\tau_e}\cong \bP^1,
$$
and $\Aut(\vGa)$ is the automorphism group of
the decorated graph $\vGa=(\Ga,\vf,\vd,\vg,\vs)$.
There is a morphism $i_{\vGa}:\cM_{\vGa}\to \MgX$
whose image is the fixed locus $\cF_{\vGa}$ associated
to $\vGa\in \GgX$. The morphism $i_{\vGa}$ induces
an isomorphism $[\cM_{\vGa}/A_{\vGa}]\cong \cF_{\vGa}$.

\subsection{Virtual tangent and normal bundles}
Given a decorated graph $\vGa\in \GgX$ and
a stable map $f:(C,x_1,\ldots,x_n)\to X$
which represents a point in the fixed locus
$\cF_{\vGa}$ associated to $\vGa$, let
\begin{eqnarray*}
&& B_1 =  \Hom(\Omega_C(x_1+\cdots+x_n),\cO_C),\quad B_2 =   H^0(C,f^*TX)\\
&& B_4 = \Ext^1(\Omega_C(x_1+\cdots+ x_n),\cO_C),\quad B_5= H^1(C,f^*TX)
\end{eqnarray*}
$T$ acts on $B_1, B_2, B_4, B_5$. 
Let $B_i^m$ and $B_i^f$ be the moving and fixed parts
of $B_i$.  We have the following exact sequences:
\begin{equation}
0\to B_1^f\to B_2^f\to T^{1,f}\to B_4^f\to B_5^f\to T^{2,f}\to 0
\end{equation}
\begin{equation}
 0\to B_1^m\to B_2^m\to T^{1,m}\to B_4^m\to B_5^m\to T^{2,m}\to 0
\end{equation}

The irreducible components of $C$ are
$$
\{ C_v\mid v\in V^S(\Ga)\}\cup\{C_e\mid e\in  E(\Ga) \}.
$$
The nodes of $C$ are
$$
\{ y_v=C_v \mid v\in V^2(\Ga) \} \cup  \{ y(e,v) \mid (e,v)\in F^S(\Ga)\}
$$

\subsubsection{Automorphisms of the domain} \label{sec:aut}
Given any $(e,v)\in F(\Ga)$, let $y(e,v)=C_e\cap C_v$, and
define
$$
w_{(e,v)}:=e^T(T_{y(e,v)}C_e)=\frac{\bw(\tau_e,\si_v)}{d_e} \in 
H_T^2(y(e,v))= M\otimes_\bZ \bQ.
$$
We have
\begin{eqnarray*}
B_1^f&=&\bigoplus_\edge \Hom(\Omega_{C_e}(y(e,v)+y(e,v')),\cO_{C_e})\\
&=& \bigoplus_\edge H^0(C_e, TC_e(-y(e,v)-y(e,v'))\\
B_1^m&=& \bigoplus_\vone T_{y(e,v)}C_e 
\end{eqnarray*}

\subsubsection{Deformations of the domain} \label{sec:deform}
Given any $v\in V^S(\Ga)$, define
a divisor $\bx_v$ of $C_v$ by
$$
\bx_v=\sum_{i\in S_v} x_i + \sum_{e\in E_v} y(e,v).
$$
Then
\begin{eqnarray*}
B_4^f&=& \bigoplus_{v\in V^S(\Ga)} \Ext^1(\Omega_{C_v}(\bx_v),\cO_C) = \bigoplus_{v\in V^S(\Ga)}
T\Mbar_{g_v, E_v\cup S_v}\\
B_4^m &= & \bigoplus_{v\in V^2(\Ga), E_v=\{e,e'\} }
T_{y_v}C_e\otimes T_{y_v} C_{e'} \oplus \bigoplus_{(e,v)\in F^S(\Ga)} 
T_{y(e,v)}C_v\otimes T_{y(e,v)} C_e
\end{eqnarray*}
where
\begin{eqnarray*}
e^T (T_{y_v}C_e \otimes T_{y_v} C_{e'})&=& w_{(e,v)}+w_{(e',v)},\quad v\in V^2(\Ga)\\
e^T (T_{y(e,v)}C_v \otimes T_{y(e,v)} C_e) &=& w_{(e,v)}-\psi_{(e,v)},\quad v\in V^S(\Ga)
\end{eqnarray*}

\subsubsection{Unifying stable and unstable vertices}
From the discussion in Section \ref{sec:aut} and Section \ref{sec:deform},
\begin{equation} \label{eqn:Bonefour}
\begin{aligned}
\frac{e^T(B_1^m)}{e^T(B_4^m)}=& 
\prod_\vone w_{(e,v)} 
\prod_{v\in V^2(\Ga), E_v=\{e,e'\} }
\frac{1}{w_{(e,v)}+ w_{(e',v)} }\\
&\cdot \prod_{v\in V^S(\Ga)}\frac{1}{\prod_{e\in E_v}(w_{(e,v)}-\psi_{(e,v)}) }.
\end{aligned}
\end{equation}

Recall that
$$
\cM_\vGa =\prod_{v\in V^S(\Ga)} \Mbar_{g_v, E_v\cup S_v}. 
$$

To unify the stable and unstable vertices, we use the following
convention for the empty sets $\Mbar_{0,1}$ and $\Mbar_{0,2}$.
Let $w_1, w_2$ be formal variables.

\begin{enumerate}
\item[(i)]  $\Mbar_{0,1}$ is a $-2$ dimensional space, and
\begin{equation}\label{eqn:one}
\int_{\Mbar_{0,1}}\frac{1}{w_1-\psi_1}=w_1.
\end{equation}
\item[(ii)] $\Mbar_{0,2}$ is a $-1$ dimensional space, and
\begin{equation}\label{eqn:two}
\int_{\Mbar_{0,2}}\frac{1}{(w_1-\psi_1)(w_2-\psi_2)}= \frac{1}{w_1+w_2}
\end{equation}
\begin{equation}\label{eqn:one-one}
\int_{\Mbar_{0,2}}\frac{1}{w_1-\psi_1} =1.
\end{equation}
\item[(iii)] $\displaystyle{ \cM_{\vGa}=\prod_{v\in V(\Ga)} \Mbar_{g_v, E_v\cup S_v} }$.
\end{enumerate}

With the above conventions (i), (ii), (iii), we may rewrite \eqref{eqn:Bonefour} as
\begin{equation}
\frac{e^T(B_1^m)}{e^T(B_4^m)}=
\prod_{v\in V(\Ga)}\frac{1}{\prod_{e\in E_v}(w_{(e,v)}-\psi_{(e,v)}) }.
\end{equation}

The following lemma shows that the conventions (i) and (ii) are consistent with
the stable case $\Mbar_{0,n}$, $n\geq 3$.
\begin{lemma}\label{lemma:psi}
For any positive integer $n$ and formal variables $w_1,\ldots,w_n$, we have
\begin{enumerate}
\item[(a)]$\displaystyle{ \int_{\Mbar_{0,n}}\frac{1}{\prod_{i=1}^n(w_i-\psi_i)}
=\frac{1}{w_1\cdots w_n}(\frac{1}{w_1}+\cdots \frac{1}{w_n})^{n-3} }$.
\item[(b)]$\displaystyle{\int_{\Mbar_{0,n}}\frac{1}{w_1-\psi_1} = w_1^{2-n} }$.
\end{enumerate}
\end{lemma}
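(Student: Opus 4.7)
The strategy is to reduce both identities to the explicit genus-zero descendant formula \eqref{eqn:genus-zero-psi}, namely $\int_{\Mbar_{0,n}}\psi_1^{a_1}\cdots\psi_n^{a_n}=\frac{(n-3)!}{a_1!\cdots a_n!}$ for $a_1+\cdots+a_n=n-3$, after expanding $\frac{1}{w_i-\psi_i}$ as a formal geometric series in $\psi_i/w_i$. The only case requiring geometric input is $n\geq 3$; the cases $n=1,2$ are handled by directly checking conventions (i) and (ii).

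For (a) with $n\geq 3$, I would write
\[
\prod_{i=1}^n \frac{1}{w_i-\psi_i} = \prod_{i=1}^n \frac{1}{w_i}\sum_{a_i\geq 0}\left(\frac{\psi_i}{w_i}\right)^{a_i}
= \sum_{a_1,\ldots,a_n\geq 0}\prod_{i=1}^n \frac{\psi_i^{a_i}}{w_i^{a_i+1}},
\]
and then integrate over $\Mbar_{0,n}$. Since $\dim\Mbar_{0,n}=n-3$, only the tuples with $a_1+\cdots+a_n=n-3$ contribute. Applying \eqref{eqn:genus-zero-psi} gives
\[
\int_{\Mbar_{0,n}}\prod_{i=1}^n\frac{1}{w_i-\psi_i}
=\frac{1}{w_1\cdots w_n}\sum_{a_1+\cdots+a_n=n-3}\frac{(n-3)!}{a_1!\cdots a_n!}\prod_{i=1}^n \frac{1}{w_i^{a_i}}.
\]
The multinomial theorem then identifies this sum with $\bigl(\frac{1}{w_1}+\cdots+\frac{1}{w_n}\bigr)^{n-3}$, proving (a). Part (b) is the special case obtained by setting $w_2,\ldots,w_n\to\infty$, but more directly: expand $\frac{1}{w_1-\psi_1}=\sum_{k\geq 0}\psi_1^k/w_1^{k+1}$, extract the top-degree term $k=n-3$, and use $\int_{\Mbar_{0,n}}\psi_1^{n-3}=1$ from \eqref{eqn:genus-zero-psi}, yielding $w_1^{2-n}$.

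Finally, I would verify that the formulas extend consistently to the unstable cases $n=1,2$ under the conventions (i), (ii), (iii). For (a): when $n=1$, the right-hand side is $\frac{1}{w_1}\cdot w_1^{2}=w_1$, matching \eqref{eqn:one}; when $n=2$, the right-hand side is $\frac{1}{w_1 w_2}\cdot\frac{w_1 w_2}{w_1+w_2}=\frac{1}{w_1+w_2}$, matching \eqref{eqn:two}. For (b): when $n=1$ one gets $w_1$, matching \eqref{eqn:one}; when $n=2$ one gets $1$, matching \eqref{eqn:one-one}. There is no real obstacle here once the multinomial identity is recognized; the only potentially confusing point is making sure the conventions (i), (ii) for unstable moduli spaces were set up precisely so that the closed-form expressions in (a) and (b) remain valid for all $n\geq 1$, which is exactly what this lemma records.
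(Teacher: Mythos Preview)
Your proposal is correct and follows essentially the same approach as the paper: expand each $\frac{1}{w_i-\psi_i}$ as a geometric series, use the dimension constraint $a_1+\cdots+a_n=n-3$ together with the genus-zero descendant formula \eqref{eqn:genus-zero-psi}, and recognize the multinomial expansion; the unstable cases $n=1,2$ are handled by direct comparison with the conventions \eqref{eqn:one}, \eqref{eqn:two}, \eqref{eqn:one-one}. Your check that the closed-form expressions specialize correctly at $n=1,2$ is slightly more explicit than the paper's, but the argument is the same.
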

\begin{proof}
(a)  The cases $n=1$ and $n=2$ follow from the definitions
\eqref{eqn:one} and \eqref{eqn:two}, respectively.
For $n\geq 3$, we have
\begin{eqnarray*}
&& \int_{\Mbar_{0,n}}\frac{1}{\prod_{i=1}^n (w_i-\psi_i)}
=\frac{1}{w_1\cdots w_n}\int_{\Mbar_{0,n}} \frac{1}{\prod_{i=1}^n(1-\frac{\psi_i}{w_i})}\\
&& =\frac{1}{w_1\cdots w_n}\sum_{a_1+\cdots+ a_n=n-3} w_1^{-a_1}\cdots w_n^{-a_n}
\int_{\Mbar_{0,n}}\psi_1^{a_1}\cdots \psi_n^{a_n}
\end{eqnarray*}
where 
$$
\int_{\Mbar_{0,n}}\psi_1^{a_1}\cdots \psi_n^{a_n} =\frac{(n-3)!}{a_1!\cdots a_n!}.
$$
So 
$$
 \int_{\Mbar_{0,n}}\frac{1}{\prod_{i=1}^n (w_i-\psi_i)}=\frac{1}{w_1\cdots w_n}
(\frac{1}{w_1}+\cdots \frac{1}{w_n})^{n-3}.
$$

(b) The cases $n=1$ and $n=2$ follow from the 
definitions \eqref{eqn:one} and \eqref{eqn:one-one}, respectively. 
For $n\geq 3$, we have
$$
\int_{\Mbar_{0,n}}\frac{1}{w_1-\psi_1}
=\frac{1}{w_1}\int_{\Mbar_{0,n}} \frac{1}{1-\frac{\psi_1}{w_1}}
=\frac{1}{w_1} w_1^{3-n} = w_1^{2-n}
$$
\end{proof}

\subsubsection{Deformation of the map}
Consider the normalization sequence
\begin{equation}\label{eqn:normalize}
\begin{aligned}
0 &\to \cO_C\to \bigoplus_{v\in V^S(\Ga)} \cO_{C_v} \oplus \bigoplus_{e\in E(\Ga)} \cO_{C_e}  \\
 & \to \bigoplus_{v\in V^2(\Ga)} \cO_{y_v} \oplus \bigoplus_{(e,v)\in F^S(\Ga)} \cO_{ y(e,v) }\to 0.
\end{aligned}
\end{equation}
We twist the above short exact sequence of sheaves
by $f^*TX$. The resulting short exact sequence gives
rise a long exact sequence of cohomology groups
\begin{eqnarray*}
0&\to& B_2 \to \bigoplus_{v\in V^S(\Ga)} H^0(C_v)\oplus
\bigoplus_{e\in E(\Ga)}H^0(C_e)  \\
&\to& \bigoplus_{v\in V^2(\Ga)} T_{f(y_v)}X 
\oplus \bigoplus_{(e,v)\in F^S(\Ga)} T_{f(y(e,v))}X \\ 
&\to& B_5 \to \bigoplus_{v\in V^S(\Ga)} H^1(C_v)\oplus
\bigoplus_{e\in E(\Ga)}H^1(C_e) \to 0.
\end{eqnarray*}
where
\begin{eqnarray*}
H^i(C_v) &=& H^i(C_v, (f|_{C_v})^*TX) \cong H^i(C_v, \cO_{C_v})\otimes
T_{p_{\si_v}}X,\\
H^i(C_e) &=& H^i(C_e, (f|_{C_e})^*TX)
\end{eqnarray*}
for $i=0,1$. We have
\begin{eqnarray*}
H^0(C_v)&=& T_{p_{\si_v}}X\\
H^1(C_v)&=& H^0(C_v,\omega_{C_v})^\vee\otimes T_{p_{\si_v}}X.
\end{eqnarray*}

\begin{lemma} Let $\si\in \Si(r)$, so that
$p_\si$ is a $T$ fixed point in $X$. Define
\begin{eqnarray*}
\bw(\si)&=& e^T(T_{p_\si}X) \in H^{2r}_T(\pt)\\
\bh(\si,g)&=& \frac{e^T(\bE^\vee\otimes T_{p_\si}X) }{e^T(T_{p_\si}X)}
\in H^{2r(g-1)}_T(\Mbar_{g,n}). 
\end{eqnarray*}
Then
\begin{equation}\label{eqn:wsi}
\bw(\si)=\prod_{(\tau,\si)\in F(\Si)} \bw(\tau,\si).
\end{equation}
\begin{equation}\label{eqn:hsig}
\bh(\si,g)=\prod_{(\tau,\si)\in F(\Si)} \frac{\Lambda_g^\vee(\bw(\tau,\si))}{\bw(\tau,\si)}
\end{equation}
where $\displaystyle{\Lambda^\vee_g(u)=\sum_{i=0}^g (-1)^i \lambda_i u^{g-i}}$.
\end{lemma}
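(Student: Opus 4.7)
My plan is to decompose the tangent space $T_{p_\si}X$ into one-dimensional $T$-representations indexed by the flags at $\si$, and then apply the splitting principle to handle the twist by $\bE^\vee$.

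First I would observe that since $X$ is a smooth toric variety and $p_\si$ is the fixed point corresponding to $\si \in \Si(r)$, the tangent space $T_{p_\si}X$ splits $T$-equivariantly as a direct sum of tangent lines to the invariant curves through $p_\si$:
\begin{equation*}
T_{p_\si}X \cong \bigoplus_{(\tau,\si)\in F(\Si)} T_{p_\si}\ell_\tau.
\end{equation*}
This uses property (1) of the map $\bw:F(\Si)\to M$ from Section \ref{sec:toric-graph}, which says the weights $\{\bw(\tau,\si):\tau\in E_\si\}$ form a $\bZ$-basis of $M$ and are exactly the weights of $T_{p_\si}X$. Since the $T$-equivariant Euler class of a direct sum is the product of the Euler classes, and the $T$-equivariant Euler class of a one-dimensional representation of weight $w$ is $w$ (cf.\ Example \ref{Ca}), identity \eqref{eqn:wsi} follows immediately.

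For \eqref{eqn:hsig} I would use the splitting principle. Let $r_1,\ldots,r_g$ be the Chern roots of $\bE$ on $\Mbar_{g,n}$, so that $\lambda_i = e_i(r_1,\ldots,r_g)$ is the $i$-th elementary symmetric polynomial in the roots. Then $\bE^\vee$ has Chern roots $-r_1,\ldots,-r_g$. Pulling back the decomposition of $T_{p_\si}X$ to $\Mbar_{g,n}\times\{p_\si\}$ and tensoring with $\bE^\vee$ gives
\begin{equation*}
\bE^\vee \otimes T_{p_\si}X \cong \bigoplus_{(\tau,\si)\in F(\Si)} \bE^\vee \otimes L_{\bw(\tau,\si)},
\end{equation*}
where $L_w$ denotes the trivial line bundle with $T$-action of weight $w$. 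Each summand $\bE^\vee\otimes L_{\bw(\tau,\si)}$ has Chern roots $\bw(\tau,\si)-r_1,\ldots,\bw(\tau,\si)-r_g$, so its top Chern (Euler) class is
\begin{equation*}
\prod_{i=1}^g(\bw(\tau,\si)-r_i) = \sum_{k=0}^g (-1)^k e_k(r_1,\ldots,r_g)\bw(\tau,\si)^{g-k} = \Lambda_g^\vee(\bw(\tau,\si)).
\end{equation*}

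Taking the product over $(\tau,\si)\in F(\Si)$ and dividing by $\bw(\si)=\prod\bw(\tau,\si)$ yields \eqref{eqn:hsig}. The argument is essentially routine once the weight decomposition of $T_{p_\si}X$ is pinned down; the only conceptual point is recognizing that the polynomial $\Lambda_g^\vee(u)$ is exactly the Chern polynomial of $\bE^\vee$ evaluated at $u$, which makes the twist computation symmetric-polynomial bookkeeping rather than anything deeper.
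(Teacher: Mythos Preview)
Your proposal is correct and follows essentially the same approach as the paper: decompose $T_{p_\si}X$ into the one-dimensional pieces $T_{p_\si}\ell_\tau$ indexed by flags, take the product of Euler classes, and then identify $e^T(\bE^\vee\otimes T_{p_\si}\ell_\tau)$ with $\Lambda_g^\vee(\bw(\tau,\si))$. The only cosmetic difference is that you expand this last identification via Chern roots and the splitting principle, whereas the paper writes it directly as $\sum_{i=0}^g(-1)^i c_i(\bE)\,c_1^T(T_{p_\si}\ell_\tau)^{g-i}$; these are the same computation.
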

\begin{proof}  $\displaystyle{T_{p_\si}X =\bigoplus_{(\tau,\si)\in F(\Si)} T_{p_\si} \ell_\tau}$, 
where $e^T(T_{p_\si}\ell_\tau)= \bw(\tau,\si)$. So
\begin{eqnarray*}
e^T(T_{p_\si})&=& \prod_{(\tau,\si)\in F(\Si)} \bw(\tau,\si),\\
\frac{e^T(\bE^\vee\otimes T_{p_\si}\ell_\tau)}{e^T(T_{p_\si}\ell_\tau)}
&=& \prod_{(\tau,\si)\in F(\Si)}\frac{e^T(\bE^\vee\otimes T_{p_\si}\ell_\tau)}{\bw(\tau,\si)},
\end{eqnarray*}
where 
$$
e^T(\bE^\vee\otimes T_{p_\si}\ell_\tau)=
\sum_{i=0}^g  (-1)^i c_i(\bE)c_1^T(T_{p_\si}\ell_\tau)^{g-i}= \sum_{i=0}^g (-1)^i\lambda_i \bw(\tau,\si)^{g-i}.
$$
\end{proof}

The map $B_1\to B_2$  sends
$H^0(C_e, TC_e(-y(e,v)-y(e',v)))$ isomorphically
to $H^0(C_e, (f|_{C_e})^*T\ell_{\tau_e})^f$, the 
fixed part of $H^0(C_e, (f|_{C_e})^*T\ell_{\tau_e})$.

\begin{lemma}
Given $d\in \bZ_{>0}$ and $\tau\in \Si(r-1)_c$, define
$\si, \si',\tau_i, \tau_i', a_i$ as in Section \ref{sec:toric-graph}, and
let $f_d:\bP^1\to \ell_\tau\cong \bP^1$ be the unique
degree $d$ map totally ramified over 
the two $T$ fixed point $p_\si$ and $p_{\si'}$ in $\ell_\tau$. Define
$$
\bh(\tau,d)=\frac{e^T(H^1(\bP^1, f_d^* TX )^m )}
{e^T (H^0(\bP^1, f_d^*TX)^m) }.
$$
Then
\begin{equation}\label{eqn:etaud}
\bh(\tau,d)=\frac{(-1)^d d^{2d}}{ (d!)^2 \bw(\tau,\si)^{2d}}
\prod_{i=1}^{r-1} b(\frac{\bw(\tau,\si)}{d}, \bw(\tau_i,\si), da_i)
\end{equation}
where
\begin{equation}\label{eqn:b}
b(u,w,a)=\begin{cases}
\prod_{j=0}^a(w-ju)^{-1}, & a\in \bZ, a\geq 0,\\
\prod_{j=1}^{-a-1} (w+ju), & a\in \bZ, a<0.
\end{cases}
\end{equation}
\end{lemma}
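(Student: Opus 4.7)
The plan is to split $f_d^*TX$ as a direct sum of $T$-equivariant line bundles on the source $\bP^1$ and compute the moving Euler class of each piece separately via Example \ref{Pone}. Using the description of $N_{\ell_\tau/X}$ from Section \ref{sec:toric-graph},
$$
TX|_{\ell_\tau} \cong T\ell_\tau \oplus L_1 \oplus \cdots \oplus L_{r-1},
$$
where $L_i$ is a $T$-equivariant line bundle of degree $a_i$ with fiber weight $\bw(\tau_i,\si)$ at $p_\si$, and $T\ell_\tau$ has degree $2$ with fiber weight $\bw(\tau,\si)$ at $p_\si$. Hence $\bh(\tau,d)$ factors as the product over these $r$ summands of the ratios $e^T(H^1)^m/e^T(H^0)^m$ for their pullbacks under $f_d$.

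Next I would translate weights across the ramified cover. Requiring $f_d$ to be $T$-equivariant forces the tangent weight at the source fixed point above $p_\si$ to be $u := \bw(\tau,\si)/d$, while the fiber weight of $f_d^*L$ at that point coincides with the fiber weight of $L$ at $p_\si$. With this $u$, Example \ref{Pone} applied to each summand yields explicit $H^0$ and $H^1$ as sums of characters, with all weights expressible as integer combinations of $u$ and the corresponding normal (or tangent) weight.

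For each normal summand $L_i$, the weights $\{\bw(\tau,\si), \bw(\tau_i,\si)\}$ are part of a $\bZ$-basis of $M$, so every weight of $H^\bullet(\bP^1, f_d^*L_i)$ is nonzero; the whole cohomology is moving, and by direct comparison with the defining formulas in \eqref{eqn:b} the ratio $e^T(H^1)^m/e^T(H^0)^m$ equals $b(u,\bw(\tau_i,\si),da_i)$ in both the $a_i \geq 0$ and $a_i<0$ cases. For the tangent summand $f_d^*T\ell_\tau$ of degree $2d$, one has $H^1=0$ and $H^0$ carries the $2d+1$ weights $(d-j)u$ for $j=0,\dots,2d$; exactly the $j=d$ term is zero, contributing the one-dimensional fixed part, and a short computation of the product $\prod_{j\neq d}(d-j)u$ together with $u=\bw(\tau,\si)/d$ produces the prefactor $((-1)^d d^{2d})/((d!)^2\bw(\tau,\si)^{2d})$. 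Multiplying this prefactor by the $r-1$ normal factors gives \eqref{eqn:etaud}.

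The main bookkeeping obstacle is the $d$-fold rescaling of tangent weights induced by the totally ramified cover: one must consistently use $u=\bw(\tau,\si)/d$ (not $\bw(\tau,\si)$) when inserting into the Example \ref{Pone} formula, and must correctly isolate the single fixed weight in $H^0(\bP^1,f_d^*T\ell_\tau)$. Getting this right is precisely what produces the $d^{2d}$ and the sign $(-1)^d$ in the prefactor; the remainder of the proof is formal and reduces to the rank-one computation already carried out in Example \ref{Pone}.
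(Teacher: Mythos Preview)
Your proposal is correct and follows essentially the same argument as the paper: decompose $TX|_{\ell_\tau}$ into the tangent line $T\ell_\tau$ and the normal summands $L_i$, apply the rank-one computation of Example \ref{Pone} to each pullback with $u=\bw(\tau,\si)/d$, observe that the normal weights are all nonzero (hence entirely moving) while the tangent piece has exactly one zero weight, and multiply. The paper's proof differs only cosmetically in presentation.
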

\begin{proof}
We use the notation in Section \ref{sec:toric-graph}.
We have
$$
N_{\ell_\tau/X}=L_1\oplus \cdots \oplus L_{r-1}.
$$
The weights of $T$-actions on $(L_i)_{p_\si}$
and $(L_i)_{p_\si}$ are $\bw(\tau_i,\si)$
and $\bw(\tau_i,\si)-a_i\bw(\tau,\si)$, respectively.
The weights of $T$-actions on
$T_0\bP^1$, $T_\infty \bP^1$, $(f_d^*L_i)_0$, $(f_d^*L_i)_\infty$
are $u:=\frac{\bw(\tau,\si)}{d}$, $-u$, 
$w_i:= \bw(\tau_i,\si)$, $w_i- da_i u$, respectively.
By Example \ref{Pone},
$$
\ch_T(H^0(\bP^1, f_d^*L_i)-H^1(\bP^1, f_d^*L_i))
=\begin{cases}
\sum_{j=0}^{da_i} e^{w_i-ju}, & a_i\geq 0,\\
\sum_{j=1}^{-da_i-1}e^{w_i+ju}, & a_i<0.
\end{cases}
$$

Note that $w_i+ju$ is nonzero for any $j\in \bZ$ since
$w_i$ and $u$ are linearly independent for
$i=1,\ldots,n-1$. So
$$
\frac{e^T\left(H^1(\bP^1,f_d^*L_i)\right)}{e^T\left(H^0(\bP^1,f_d^*L_i)\right)}
=\frac{e^T\left(H^1(\bP^1,f_d^*L_i)^m\right)}{e^T\left(H^0(\bP^1,f_d^*L_i)^m\right)}
=b(u, w_i, da_i)
$$
where $b(u,w,a)$ is defined by \eqref{eqn:b}.
By Example \ref{Pone},
$$
\ch_T(H^0(\bP^1, f_d^*T\ell_\tau)-H^1(\bP^1, f_d^*T\ell_\tau))
=\sum_{j=0}^{2d} e^{du-ju}= 1+\sum_{j=1}^d (e^{j\bw(\tau,\si)/d}+ e^{-j\bw(\tau,\si)/d}).
$$
So
$$
\frac{e^T(H^1(\bP^1,f_d^*T\ell_\tau)^m)}{e^T(H^0(\bP^1,f_d^*T\ell_\tau)^m)}
=\prod_{j=1}^d \frac{-d^2}{j^2 \bw(\tau,\si)^2 }=\frac{(-1)^d d^{2d}}{(d!)^2 \bw(\tau,\si)^{2d}}.
$$
Therefore,
\begin{eqnarray*}
 \frac{e^T(H^1(\bP^1, f_d^* TX )^m )}
{e^T (H^0(\bP^1, f_d^*TX)^m) }
&=&\frac{e^T(H^1(\bP^1,f_d^*T\ell_\tau)^m)}{e^T(H^0(\bP^1,f_d^*T\ell_\tau)^m)}
\cdot\prod_{i=1}^{r-1}\frac{e^T(H^1(\bP^1,f_d^*L_i)^m)}{e^T(H^0(\bP^1,f_d^*L_i)^m)}\\
&=&\frac{(-1)^d d^{2d}}{(d!)^2 \bw(\tau,\si)^{2d}}\prod_{i=1}^{r-1}b(\frac{\bw(\tau,\si)}{d}, \bw(\tau_i,\si), da_i).
\end{eqnarray*}
\end{proof}
Finally, $f(y_v)=p_{\si_v}= f(y(e,v))$, and
$$
e^T(T_{p_{\si_v}}X) =\bw(\si_v).
$$
From the above discussion, we conclude that
\begin{eqnarray*}
\frac{e^T(B_5^m)}{e^T(B_2^m)} &=&
\prod_{v\in V^2(\Ga)} \bw(\si_v) \cdot \prod_{(e,v)\in F^S(\Ga)}\bw(\si_v)
\cdot \prod_{v\in V^S(\Gamma)}\bh(\si_v,g_v)\cdot\prod_{e\in E(\Ga)}\bh(\tau_e,d_e)\\
&=& \prod_{v\in V(\Ga)}\bigl( \bh(\si_v,g_v)\cdot \bw(\si_v)^{\val(v)} \bigr)
\cdot \prod_{e\in E(\Ga)}\bh(\tau_e,d_e)
\end{eqnarray*}
where $\bw(\si)$, $\bh(\si,g)$, and $\bh(\tau,d)$ are defined by
\eqref{eqn:wsi}, \eqref{eqn:hsig}, \eqref{eqn:etaud}, respectively.

\subsection{Contribution from each graph} \label{sec:each-graph}
\subsubsection{Virtual tangent bundle} We have $B_1^f=B_2^f$, $B_5^f=0$. So
$$
T^{1,f}=B_4^f =\bigoplus_{v\in V^S(\Ga)}T\Mbar_{g_v, E_v\cup S_v},\quad
T^{2,f}=0.
$$
We conclude that
$$
[\prod_{v\in V^S(\Ga)}\Mbar_{g_v, E_v\cup S_v}]^\vir
=\prod_{v\in V^S(\Ga)}[\Mbar_{g_v, E_v\cup S_v} ].
$$
\subsubsection{Virtual normal bundle} 
Let $N^\vir_{\vGa}$ be the pull back of the virtual normal bundle
of $\cF_{\vGa}$ in $\MgX$ under $i_\vGa:\cM_{\vGa}\to \cF_{\vGa}$. Then 
$$
\frac{1}{e^T(N^\vir_\vGa)} = \frac{e^T(B_1^m)e^T(B_5^m)}{e^T(B_2^m)e^T(B_4^m)}
=\prod_{v\in V(\Ga)}\frac{\bh(\si_v,g_v)\cdot \bw(\si_v)^{\val(v)} }{\prod_{e\in E_v}(w_{(e,v)}-\psi_{(e,v)})} 
\cdot \prod_{e\in E(\Ga)}\bh(\tau_e,d_e)
$$

\subsubsection{Integrand} Given $\si\in \Si(r)$, let
$$
i_\si^*: A^*_T(X)\to A^*_T(p_\si) =\bQ[u_1,\ldots,u_r]
$$
be induced by the inclusion $i_\si:p_\si \to X$. Then
\begin{equation}\label{eqn:integrand}
\begin{aligned}
&i_\vGa^*\prod_{i=1}^n\left(\ev_i^*\gamma_i^T \cup (\psi_i^T)^{a_i}\right)\\
=&\prod_{\tiny \begin{array}{c} v\in V^{1,1}(E)\\ S_v=\{i\}, E_v=\{e\}\end{array}} i^*_{\si_v}\gamma_i^T (-w_{(e,v)})^{a_i} 
\cdot \prod_{v\in V^S(\Ga)}\Bigl(\prod_{i\in S_v} i^*_{\sigma_v}\gamma_i^T\prod_{e\in E_v}\psi_{(e,v)}^{a_i}\Bigr)
\end{aligned}
\end{equation}
To unify the stable vertices in $V^S(\Ga)$ and the unstable vertices in $V^{1,1}(\Ga)$ , 
we use the following convention: for $a\in \bZ_{\geq 0}$,
\begin{equation}\label{eqn:one-one-a}
\int_{\Mbar_{0,2}}\frac{\psi_2^a}{w_1-\psi_1}=(-w_1)^a.
\end{equation}
In particular, \eqref{eqn:one-one} is obtained by setting $a=0$. With the 
convention \eqref{eqn:one-one-a}, we may rewrite \eqref{eqn:integrand} as
\begin{equation}
i_\vGa^*\prod_{i=1}^n\left(\ev_i^*\gamma_i^T \cup (\psi_i^T)^{a_i}\right)=
\prod_{v\in V(\Ga)}\Bigl(\prod_{i\in S_v} i^*_{\sigma_v}\gamma_i^T\prod_{e\in E_v}\psi_{(e,v)}^{a_i}\Bigr).
\end{equation}

The following lemma shows that the convention \eqref{eqn:one-one-a} is consistent with
the stable case $\Mbar_{0,n}$, $n\geq 3$.
\begin{lemma}  Let $n,a$ be integers, $n\geq 2$, $a\geq 0$. Then
$$
\int_{\Mbar_{0,n}}\frac{\psi_2^a}{w_1-\psi_1}=\begin{cases}
\displaystyle{ \frac{\prod_{i=0}^{a-1}(n-3-i)}{a!}w_1^{a+2-n},}  & n=2\textup{ or } 0\leq a\leq n-3,\\
0, &\textup{otherwise}.
\end{cases} 
$$
\end{lemma}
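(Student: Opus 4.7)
The plan is to mirror the proof of Lemma~\ref{lemma:psi}(b), splitting into the boundary case $n=2$ (which is a convention) and the stable range $n\geq 3$ (which reduces to descendant integrals).

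First, for $n=2$, I would appeal directly to the convention \eqref{eqn:one-one-a}, which asserts $\int_{\Mbar_{0,2}} \psi_2^a/(w_1-\psi_1) = (-w_1)^a$. A short algebraic check confirms this matches the claimed formula: when $n=2$, the product telescopes as $\prod_{i=0}^{a-1}(-1-i) = (-1)^a a!$, so the right-hand side becomes $\frac{(-1)^a a!}{a!}\, w_1^{a} = (-w_1)^a$. This is the one place where one must be careful that the sign conventions line up, but it is purely formal.

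For $n\geq 3$, the strategy is to expand the denominator as a formal geometric series
\begin{equation*}
\frac{1}{w_1-\psi_1} = \frac{1}{w_1}\sum_{k\geq 0}\left(\frac{\psi_1}{w_1}\right)^k,
\end{equation*}
pull the sum through the integral, and reduce the computation to the genus-zero descendant integrals $\int_{\Mbar_{0,n}}\psi_1^k\psi_2^a$. The dimension constraint forces these to vanish unless $k+a=n-3$, which is precisely the source of the dichotomy in the statement: if $a>n-3$ no admissible $k$ exists and every term is zero, while if $0\leq a\leq n-3$ only the single index $k=n-3-a$ contributes. Invoking the explicit formula \eqref{eqn:genus-zero-psi} evaluates the surviving integral to $\frac{(n-3)!}{(n-3-a)!\,a!}$, and combining with $w_1^{-(k+1)} = w_1^{a+2-n}$ and rewriting the binomial coefficient as $\prod_{i=0}^{a-1}(n-3-i)/a!$ produces the claimed expression.

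I do not anticipate a genuine obstacle: the lemma is essentially a packaging of \eqref{eqn:genus-zero-psi} together with the convention \eqref{eqn:one-one-a}. The only subtlety is the bookkeeping at $n=2$, where the product $\prod_{i=0}^{a-1}(n-3-i)$ acquires the sign $(-1)^a$ that exactly matches the convention; confirming this consistency is the whole content of the $n=2$ case.
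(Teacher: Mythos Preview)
Your proposal is correct and follows essentially the same approach as the paper: the $n=2$ case is read off from the convention \eqref{eqn:one-one-a}, and for $n\geq 3$ one expands $1/(w_1-\psi_1)$ as a geometric series, uses the dimension constraint to isolate the single surviving term, and applies \eqref{eqn:genus-zero-psi}. Your verification that the $n=2$ product $\prod_{i=0}^{a-1}(-1-i)=(-1)^a a!$ matches the convention is a nice consistency check that the paper leaves implicit.
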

\begin{proof} The case $n=2$ follows from \eqref{eqn:one-one-a}.
For $n\geq 3$, 
\begin{eqnarray*}
&& \int_{\Mbar_{0,n}}\frac{\psi_2^a}{w_1-\psi_1}
=\frac{1}{w_1} \int_{\Mbar_{0,n}}\frac{\psi_2^a}{1-\frac{\psi_1}{w_1}}
=w_1^{a+2-n} \int_{\Mbar_{0,n}} \psi_1^{n-3-a}\psi_2^a\\
&& = w_1^{a+2-n}\frac{(n-3)!}{(n-3-a)! a_!}=\frac{\prod_{i=0}^{a-1}(n-3-i)}{a!} w_1^{a+2-n}.
\end{eqnarray*}
\end{proof}

\subsubsection{Integral}\label{sec:vGa-integral}
The contribution of 
$$
\int_{[\MgX^T]^{\vir,T}} \frac{i^*\prod_{i=1}^n (\ev_i^*\gamma_i^T\cup (\psi_i^T)^{a_i})}{e^T(N^\vir)}
$$
from the fixed locus $\cF_\vGa$ is given by
\begin{eqnarray*}
&& \frac{1}{|A_{\vGa}|}\prod_{e\in E(\Ga)}\bh(\tau_e,d_e) \prod_{v\in V(\Ga)}\Bigl(\bw(\si_v)^{\val(v)}\prod_{i\in S_v}i_{\si_v}^*\gamma_i^T\Bigr)\\
&& \quad\quad \cdot \prod_{v\in V(\Ga)}\int_{\Mbar_{g_v,E_v\cup S_v}}
\frac{\bh(\si_v,g_v)\cdot \prod_{e\in E_v}\psi^{a_i}_{(e,v)} }{\prod_{e\in E_v}(w_{(e,v)}-\psi_{(e,v)})} 
\end{eqnarray*}
where $|A_{\vGa}| =|\Aut(\vGa)|\cdot\prod_{e\in E(\Ga)}d_e$.

\subsection{Sum over graphs} 
Summing over the contribution from each graph $\vGa$ given
in Section \ref{sec:vGa-integral} above, we obtain the following
formula.
\begin{theorem}\label{main}
\begin{equation}\label{eqn:sum}
\begin{aligned}
& \langle \tau_{a_1}(\gamma_1^T)\cdots \tau_{a_n}(\gamma_n^T)\rangle^{X_T}_{g,\beta}\\
=& \sum_{\vGa\in G_{g,n}(X,\beta)} \frac{1}{|\Aut(\vGa)|}
\prod_{e\in E(\Ga)} \frac{\bh(\tau_e,d_e)}{d_e}
\prod_{v\in V(\Ga)}\Bigl(\bw(\si_v)^{\val(v)} \prod_{i\in S_v} i_{\si_v}^* \gamma_i^T \Bigr) \\
&\cdot \prod_{v\in V(\Ga)}
\int_{\Mbar_{g,E_v \cup S_v} }
\frac{\bh(\si_v,g_v)\prod_{i\in S_v} \psi_i^{a_i}}{\prod_{e \in E_v} (w_{(e,v)}-\psi_{(e,v)})}.
\end{aligned}
\end{equation}
where $\bh(\tau,d)$, $\bw(\si)$, $\bh(\si,g)$ are given by
\eqref{eqn:etaud}, \eqref{eqn:wsi}, \eqref{eqn:hsig}, respectively, and we have
the following convention for the $v\notin V^S(\Ga)$:
\begin{eqnarray*}
&&\int_{\Mbar_{0,1}}\frac{1}{w_1-\psi_2}= w_1,\quad
\int_{\Mbar_{0,2}}\frac{1}{(w_1-\psi_1)(w_2-\psi_2)}=\frac{1}{w_1+w_2},\\
&& \int_{\Mbar_{0,2}}\frac{\psi_2^a}{w_1-\psi_1}=(-w_1)^a,\quad a\in \bZ_{\geq 0}.
\end{eqnarray*}
\end{theorem}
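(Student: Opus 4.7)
The plan is to assemble the formula by organizing the virtual localization identity graph by graph, using the pieces already computed in the preceding subsections. First I would start from the integration formula
$$
\langle \tau_{a_1}(\gamma_1^T)\cdots \tau_{a_n}(\gamma_n^T)\rangle^{X_T}_{g,\beta}
= \int_{[\MgX^T]^{\vir,T}} \frac{i^*\prod_{i=1}^n \bigl(\ev_i^*\gamma_i^T \cup (\psi_i^T)^{a_i}\bigr)}{e^T(N^\vir)},
$$
and decompose $\MgX^T = \bigsqcup_{\vGa \in \GgX} \cF_{\vGa}$ using the graph parametrization from Section \ref{sec:graph-notation}. The integral then splits as a sum over $\vGa$ of integrals over $\cF_{\vGa}$.

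Next I would identify $\cF_{\vGa}$ with $[\cM_{\vGa}/A_{\vGa}]$, where $\cM_{\vGa} = \prod_{v \in V^S(\Ga)} \Mbar_{g_v, E_v \cup S_v}$, so that integration over $[\cF_{\vGa}]^{\vir,T}$ becomes $\tfrac{1}{|A_{\vGa}|}$ times integration over the product $\cM_{\vGa}$. The virtual tangent bundle analysis gives $[\cM_{\vGa}]^\vir = \prod_v [\Mbar_{g_v, E_v \cup S_v}]$, and from the computations in Sections on $B_1, B_4$ (deformations/automorphisms of the domain) and $B_2, B_5$ (deformations of the map) one has the factorization
$$
\frac{1}{e^T(N^\vir_{\vGa})} = \prod_{v \in V(\Ga)} \frac{\bh(\si_v,g_v)\, \bw(\si_v)^{\val(v)}}{\prod_{e \in E_v}(w_{(e,v)}-\psi_{(e,v)})} \cdot \prod_{e \in E(\Ga)} \bh(\tau_e, d_e).
$$
For the integrand, since $\ev_i$ sends $\cF_{\vGa}$ to the fixed point $p_{\si_{\vs(i)}}$, the pullback $i_{\vGa}^* \ev_i^* \gamma_i^T$ is $i_{\si_{\vs(i)}}^* \gamma_i^T$, while $i_\vGa^* \psi_i^T$ becomes $\psi_i$ on the stable vertex factor (or $-w_{(e,v)}$ in the unstable $V^{1,1}$ case, absorbed by the convention \eqref{eqn:one-one-a}).

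Then I would collect all these local contributions. The integral over $\cM_{\vGa}$ factors across vertices, producing the vertex integrals in the statement. The final bookkeeping step uses $|A_{\vGa}| = |\Aut(\vGa)| \cdot \prod_{e \in E(\Ga)} d_e$ to turn $\prod_{e \in E(\Ga)} \bh(\tau_e,d_e)/|A_{\vGa}|$ into $\frac{1}{|\Aut(\vGa)|} \prod_e \bh(\tau_e,d_e)/d_e$. The unstable-vertex conventions (i), (ii), (iii) ensure that the formula is uniform over $v \in V(\Ga)$ rather than only over $V^S(\Ga)$; Lemma \ref{lemma:psi} and the $\psi_2^a$-lemma guarantee consistency with the genuine integrals when the vertex is stable.

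The main obstacle is not any single analytic difficulty but rather careful bookkeeping: making the unstable vertices in $V^1(\Ga)$, $V^{1,1}(\Ga)$, and $V^2(\Ga)$ fit into the same product formula as the stable ones. Concretely, for $v \in V^2(\Ga)$ the two factors $\bw(\si_v)$ from $e^T(B_5^m)/e^T(B_2^m)$ and $w_{(e,v)} + w_{(e',v)}$ from $e^T(B_4^m)$ must combine into $\int_{\Mbar_{0,2}} \prod (w-\psi)^{-1} = 1/(w_1+w_2)$; for $v \in V^1(\Ga)$ a single factor $w_{(e,v)}$ from $e^T(B_1^m)$ must match $\int_{\Mbar_{0,1}} 1/(w-\psi) = w$; and for $v \in V^{1,1}(\Ga)$ the $\psi$-insertion must match $(-w)^a$. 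Once these three conventions are verified (and they are, by the lemmas), the terms reassemble into the uniform product $\prod_{v \in V(\Ga)}$ displayed in \eqref{eqn:sum}, completing the proof.
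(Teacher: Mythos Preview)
Your proposal is correct and follows essentially the same approach as the paper: the paper proves Theorem \ref{main} precisely by assembling the graph-by-graph contributions computed in Section \ref{sec:each-graph} (virtual tangent bundle, virtual normal bundle, integrand, and the integral in Section \ref{sec:vGa-integral}), then summing over $\vGa\in G_{g,n}(X,\beta)$, with the unstable-vertex conventions \eqref{eqn:one}, \eqref{eqn:two}, \eqref{eqn:one-one-a} unifying the product over $V(\Ga)$. Your outline matches this structure and bookkeeping exactly.
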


Given $g\in \bZ_{\geq 0}$, $r$ weights $\vec{w}=\{ w_1,\ldots,w_r\}$,
$r$ partitions $\vec{\mu} =\{ \mu^1,\ldots, \mu^r\}$, and $a_1,\ldots,a_k\in \bZ$,
let $\ell(\mu^i)$ be the length of $\mu^i$, and let $\ell(\vmu)=\sum_{i=1}^r \ell(\mu^i)$.
We define 
$$
\langle \tau_{a_1},\ldots,\tau_{a_k}\rangle_{g,\vmu,\vec{w}}
=\int_{\Mbar_{g,\ell(\vmu)+k}}
\prod_{i=1}^r\Bigl(\frac{\Lambda_g^\vee(w_i) w_i^{\ell(\vmu)-1}
}{\prod_{j=1}^{\ell(\mu^i)}\frac{w_i}{\mu^i_j}-\psi^i_j)}\Bigr)
\prod_{b=1}^k\psi_b^{a_i}.
$$
Given $v\in V(\Ga)$, define
$\vec{w}(v)= \{ \bw(\tau,\si_v)\mid (\tau,\si_v)\in F(\Si)\}$.
Given $v\in V(\Ga)$, and $\tau\in E_{\si_v}$,
let $\mu^{v,\tau}$ be a (possibly empty) partition defined by
$\{ d_e\mid e\in E_v, \vf(e)=\tau\}$,  and define
$\vmu(v)=\{ \mu^{v,\tau}\mid (\tau,\si_v)\in F(\Si)\}$. 
Then \eqref{eqn:sum} can be rewritten as
\begin{equation}
\begin{aligned}
 &\langle \tau_{a_1}(\gamma_1^T)\cdots \tau_{a_n}(\gamma_n^T)\rangle^{X_T}_{g,\beta}\\
= & \sum_{\vGa\in G_{g,n}(X,\beta)} \frac{1}{ |\Aut(\vGa)| }
\prod_{e\in E(\Ga)} \frac{\bh(\tau_e,d_e)}{d_e}
\prod_{v\in V(\Ga)}\Bigl(\prod_{i\in S_v} i_{\si_v}^* \gamma_i  
\langle \prod_{i\in S_v}\tau_{a_i}\rangle_{g_v,\vmu(v),\vec{w}(v)}\Bigr).
\end{aligned}
\end{equation}

Recall that
$$
g= \sum_{v\in V(\Ga)}g_v + |E(\Ga)| - |V(\Ga)| +1
$$
so 
$$
2g-2 =\sum_{v\in V(\Ga)} (2g_v-2 +\val(v)).
$$

Given $\vGa=(\Ga,\vf,\vd,\vg,\vs)$, let $\vGa'=(\Ga,\vf,\vd,\vs)$ be the
decorated graph obtained by forgetting the genus map. Let
$G_n(X,\beta)=\{ \vGa'\mid \vGa\in \cup_{g\geq 0}\GgX \}$. Define
\begin{equation}\label{eqn:gene}
\langle\tau_{a_1}(\gamma_1^T),\cdots, \tau_{a_n}(\gamma_n^T)\mid u\rangle^{X_T}_\beta
= \sum_{g\geq 0} u^{2g-2}\langle\tau_{a_1}(\gamma_1^T),\cdots, \tau_{a_n}(\gamma_n^T)\rangle^{X_T}_{g,\beta}
\end{equation}
\begin{equation}
\langle \tau_{a_1},\ldots,\tau_{a_k}\mid u \rangle_{\vmu,\vec{w}}=
\sum_{g\geq 0}u^{2g-2+\ell(\vmu)}\langle \tau_{a_1},\ldots,\tau_{a_k}\rangle_{g,\vmu,\vec{w}}.
\end{equation}
Then we have the following formula for the generating  function  \eqref{eqn:gene}.
\begin{theorem}\label{generating}
\begin{equation}
\begin{aligned}
\langle \tau_{a_1}(\gamma_1^T)\cdots \tau_{a_n}(\gamma_n^T)\mid u\rangle^{X_T}_\beta 
= &  \sum_{\vGa'\in G_n(X,\beta)} \frac{1}{|\Aut(\vGa)|}
\prod_{e\in E(\Ga)} \frac{\bh(\tau_e,d_e)}{d_e} \\
& \cdot \prod_{v\in V(\Ga)}\Bigl(\prod_{i\in S_v} i_{\si_v}^* \gamma_i^T  
\langle \prod_{i\in S_v}\tau_{a_i}\mid u \rangle_{\vmu(v),\vec{w}(v)}\Bigr).
\end{aligned}
\end{equation}
\end{theorem}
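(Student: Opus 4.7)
The plan is to derive Theorem \ref{generating} directly from Theorem \ref{main} by multiplying both sides by $u^{2g-2}$, summing over $g \geq 0$, and reorganizing the double sum. First I would record the combinatorial identity
\[
2g - 2 \;=\; \sum_{v \in V(\Ga)} \bigl(2g_v - 2 + \val(v)\bigr),
\]
which follows from the topology constraint $g = \sum_v g_v + |E(\Ga)| - |V(\Ga)| + 1$ of Definition \ref{df:GgX} combined with the handshake identity $\sum_v \val(v) = 2|E(\Ga)|$. Since $\ell(\vmu(v)) = \sum_{\tau} \ell(\mu^{v,\tau}) = |E_v| = \val(v)$, this identity factors the exponential weight as $u^{2g-2} = \prod_{v} u^{2g_v-2+\ell(\vmu(v))}$.

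Next I would exchange the order of summation by noting that every $\vGa \in \bigsqcup_{g\geq 0} G_{g,n}(X,\beta)$ is uniquely determined by its underlying genus-free graph $\vGa' = (\Ga, \vf, \vd, \vs) \in G_n(X,\beta)$ together with a genus map $\vg : V(\Ga) \to \bZ_{\geq 0}$. The edge factor $\prod_e \bh(\tau_e, d_e)/d_e$ and the marking factor $\prod_v\prod_{i\in S_v} i_{\si_v}^*\gamma_i^T$ depend only on $\vGa'$, so after pulling these outside the $\vg$-sum, the remaining per-vertex sum
\[
\sum_{g_v \geq 0} u^{2g_v-2+\ell(\vmu(v))} \langle \prod_{i\in S_v}\tau_{a_i}\rangle_{g_v,\vmu(v),\vec{w}(v)}
\]
is exactly $\langle \prod_{i\in S_v}\tau_{a_i}\mid u\rangle_{\vmu(v),\vec{w}(v)}$ by the definition of the latter symbol. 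Because the $g_v$ are summed independently, distributing over vertices converts the $\vg$-sum into a product indexed by $v \in V(\Ga)$, giving the claimed vertex factor.

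The one point requiring care is the automorphism bookkeeping. Reading the sum in Theorem \ref{main} as running over isomorphism classes of decorated graphs $[\vGa]$ weighted by $1/|\Aut(\vGa)|$, the group $\Aut(\vGa')$ acts on the set of genus assignments $\vg$, with stabilizer $\Aut(\vGa)$; a standard orbit-stabilizer computation then yields
\[
\sum_{[\vGa] \text{ over } \vGa'} \frac{1}{|\Aut(\vGa)|} f(\vg) \;=\; \frac{1}{|\Aut(\vGa')|} \sum_{\vg : V(\Ga) \to \bZ_{\geq 0}} f(\vg)
\]
whenever $f$ is $\Aut(\vGa')$-invariant, which holds here because $f$ is built from vertex-local integrals depending only on the data labeling each vertex. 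Assembling the factored $u$-weight, the per-vertex generating function, and this automorphism identity produces exactly the stated formula. No individual step is difficult; the main bookkeeping subtlety is the orbit-stabilizer passage from $\Aut(\vGa)$ to $\Aut(\vGa')$ that accompanies forgetting the genus labels.
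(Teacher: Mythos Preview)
Your proposal is correct and follows exactly the route the paper takes: the paper states Theorem~\ref{generating} immediately after recording the identity $2g-2=\sum_v(2g_v-2+\val(v))$ and defining $G_n(X,\beta)$ and the per-vertex generating series, without writing out any further argument. Your write-up in fact supplies more detail than the paper does---in particular the orbit--stabilizer bookkeeping converting $1/|\Aut(\vGa)|$ into $1/|\Aut(\vGa')|$ is glossed over entirely in the paper (and the symbol $\Aut(\vGa)$ in the statement of Theorem~\ref{generating} is evidently a slip for $\Aut(\vGa')$).
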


\section{Smooth Deligne-Mumford Stacks} \label{sec:DMstacks}

We work over $\bC$. Let $\cX$ be a smooth Deligne-Mumford (DM) stack. 
Let $\pi:\cX\to X$ be the natural projection to the coarse moduli space $X$.

\subsection{The inertia stack and its rigidification}
The inertia stack $\cIX$ associated to $\cX$ is a smooth DM stack
such that the following diagram is Cartesian:
$$
\begin{CD}
\cIX @>>> \cX \\
@VVV @VV{\Delta}V \\
\cX  @>{\Delta}>> \cX\times \cX 
\end{CD}
$$
where $\Delta:\cX\to \cX\times \cX$ is the diagonal map.
An object in the category $\cIX$ is a pair
$(x,g)$, where $x$ is an object in the category
$\cX$ and $g\in \Aut_{\cX}(x)$:
$$
\Ob(\cIX) =\{ (x,g)\mid x\in \cX, g\in \Aut_{\cX}(x)\}.
$$
The morphisms between two objects in the category $\cIX$ are:
$$
\Hom_{\cIX}( (x_1,g_1), (x_2,g_2)) =\{ h\in \Hom_{\cX} (x_1, x_2) \mid h\circ g_1 = g_2 \circ h \}.
$$
In particular,
$$
\Aut_{\cIX}(x,g) =\{ h\in \Aut_{\cX}(x)\mid h\circ g = g\circ h\}.
$$
The rigidified inertia stack $\bIX$ satisfies
$$
\Ob(\bIX) = \Ob(\cIX),\quad
\Aut_{\bIX}(x,g) = \Aut_{\cIX}(x,g)/\langle g\rangle,
$$
where $\langle g\rangle$ is the subgroup of  $\Aut_{\cIX}(x,g)$ generated by $g$.

There is a more topological interpretation of the inertia stack $\cIX$. 
Let $L\cX =\mathrm{Map}(S^1,\cX)$ be the stack of loops in $\cX$.  
The rotation of $S^1$ induces an $S^1$-action on $L\cX$. 
The stack $(L\cX)^{S^1}$ of $S^1$ fixed loops can be identified with the inertial stack $\cIX$. 
An object in $\cIX$ is a morphism $[\mathrm{pt}/\bZ]\to \cX$ of stacks, which is determined
by $x\in \Ob(\cX)$ and the image of $1\in \bZ$ in $\Aut(x)$.

There is a natural projection $q:\cIX\to \cX$ 
which sends $(x,g)$ to $x$.  There is a natural
involution $\iota:\cIX\to \cIX$ which
sends $(x,g)$ to $(x,g^{-1})$. We assume that $\cX$ is connected. Let
$$
\cIX =\bigsqcup_{i\in I}\cX_i
$$
be disjoint union of connected components.
There is a distinguished connected component
$\cX_0$ whose objects are $(x,\id_x)$, where
$x\in \Ob(\cX)$, and $\id_x\in \Aut(x)$ is the identity
element. The involution $\iota$ restricts
to an isomorphism $\iota_i:\cX_i\to \cX_{\iota(i)}$. 
In particular, $\iota_0:\cX_0\to \cX_0$ is the
identity functor.

\begin{example}[classifying space] \label{ex:BG}
Let $G$ be a finite group. The stack $\cB G= [\pt/G]$
is a category which consists of one object $x$, and $\Hom(x,x)=G$. 
The objects of its inertia stack $\cI \cB G$ are
$$
\Ob(\cI \cB G) =\{ (x,g)\mid g\in G\}.
$$
The morphisms between two objects are
$$
\Hom((x,g_1), (x,g_2)) = \{ g\in G\mid g_2 g = g g_1\} =\{ g\in G\mid g_2= g g_1 g^{-1}\}. 
$$
Therefore
$$
\cI\cB G\cong [G/G]
$$
where $G$ acts on $G$ by conjugation. We have
$$
\cI \cB G=\bigsqcup_{c\in \Conj(G)} (\cB G)_c
$$
where $\Conj(G)$ is the set of conjugacy classes in $G$, and
$(\cB G)_c$ is the connected component associated to the conjugacy 
class $c\in \Conj(G)$.

In particular, when $G$ is abelian, $\Conj(G)=G$, and
$$
\cI \cB G =\bigsqcup_{g\in G} (\cB G)_g
$$
where $(\cB G)_g =[g/G]$.

\end{example}

Given a positive integer $r$, let $\mu_r$ denote the 
group of $r$-th roots of unity. It is a cyclic subgroup
of $\bC^*$ of order $r$, generated by
$$
\zeta_r := e^{2\pi \sqrt{-1}/r}.
$$

\begin{example}\label{ex:WPone}
Let $\bC^*$ acts on $\bC^2-\{0\}$ by
$$
\lambda\cdot (x,y)=(\lambda^2 x, \lambda^3 y),\quad\lambda\in \bC^*,\quad
(x,y)\in \bC^2-\{0\}.
$$
Let $\cX$ be the quotient stack:
$$
\cX=[ (\bC^2-\{0\})/\bC^*] =\bP[2,3].
$$
Then the coarse moduli space is $X=\bP^1$.

We have
$$
\cI \cX= \bigsqcup_{i=0}^3 \cX_i
$$
where 
\begin{eqnarray*}
\cX_0= \cX, && \Ob(\cX_0)= \{ ((x,y), 1)\mid (x,y)\in \bC^2-\{0\} \}, \\
\cX_1= \cB\mu_2,&& \Ob(\cX_1)=\{ ((1,0), -1) \},\\ 
\cX_2=\cB\mu_3, && \Ob(\cX_2)= \{ ((0,1), e^{2\pi\sqrt{-1}/3}) \},\\
\cX_3=\cB\mu_3, && \Ob(\cX_3)= \{ ((0,1), e^{4\pi\sqrt{-1}/3}) \}.
\end{eqnarray*}
We have
$$
\iota_0:\cX_0\to \cX_0,\quad
\iota_1:\cX_1,\to \cX_1,\quad
\iota_2:\cX_2\to \cX_3.
$$
\end{example}

\subsection{Age} \label{sec:age}
Given any object $(x,g)$ in $\cIX$, $g:T_x\cX\to T_x\cX$ is 
a linear isomorphism such that $g^r=\id$, where $r$ is the order
of $g$. The eigenvalues of $g:T_x\cX\to T_x\cX$ are $\zeta_r^{l_1},\ldots, \zeta_r^{l_n}$, where
$l_i\in \{0,1,\ldots, r-1\}$, $n=\dim_\bC \cX$.
Define 
$$
\age(x,g):= \frac{l_1+\cdots+l_n}{r}. 
$$
Then $\age: \cIX\to \bQ$ is constant on each connected
component $\cX_i$ of $\cIX$.  Define $\age(\cX_i)=\age(x,g)$ where
$(x,g)$ is any object in $\cX_i$.  Note that
$$
\age(\cX_i)+ \age(\cX_{\iota(i)})=\dim_\bC \cX-\dim_\bC \cX_i.
$$
\begin{example} \label{ex:WPone-age} 
Let $\cX_0$, $\cX_1$, $\cX_2$, $\cX_3$
be defined as in Example \ref{ex:WPone}. Then 
$$
\age(\cX_0)=0,\quad 
\age(\cX_1)=\frac{1}{2},\quad
\age(\cX_2)=\frac{1}{3},\quad
\age(\cX_3)=\frac{2}{3}.
$$
\end{example}

\subsection{The orbifold cohomology group and operational Chow group}
In \cite{CR1}, W. Chen and Y. Ruan introduced  the orbifold cohomology group 
of a complex orbifold. See \cite[Section 4.4]{AGV1} for a more algebraic version.

The rational Chen-Ruan orbifold cohomology group of $\cX$ is defined to be
$$
H^*_\orb(\cX): = \bigoplus_{a\in \bQ_{\geq 0}}H^a_\orb(\cX)
$$
where
$$
H^a_\orb(\cX)=\bigoplus_{i\in I} H^{a-2\age(\cX_i)}(\cX_i).
$$
The Chen-Ruan orbifold cohomology $H^*_\orb$ is denoted by $H^*_{\mathrm{CR}}$ in some 
papers, for example \cite{Jo}. 

The rational orbifold operational Chow group of $\cX$ is defined
to be
$$
A^*_\orb(\cX): = \bigoplus_{a\in \bQ_{\geq 0}}A^a_\orb(\cX)
$$
where
$$
A^a_\orb(\cX)=\bigoplus_{i\in I} A^{a-\age(\cX_i)}(\cX_i).
$$

Suppose that $\cX$ is proper, and let
$$
\int_\cX: A^*(\cX) \to \bQ
$$
be defined as in Section \ref{sec:intersectionAG}.  Similarly, we have
$$
\int_\cX:H^*(\cX)\to \bQ.
$$
The orbifold Poincar\'{e} pairing is defined by 
$$
(\alpha,\beta)_\orb := 
\begin{cases} 
\int_{\cX_i} \alpha \cup \iota_i^* \beta, & j=\iota(i),\\
0, & j\neq \iota(i),
\end{cases}
$$
where $\alpha\in H^*(\cX_i)$, $\beta\in H^*(\cX_j)$. 
 
\begin{example}
Let $\cX = \bP[2,3]$, and let $\cX_0, \cX_1, \cX_2, \cX_3$ be defined 
as in Example \ref{ex:WPone}. Let $H\in H^2(\cX)=A^1(\cX)$ be the 
pull back of the hyperplane class of $H^2(\bP^1)=A^1(\bP^1)$ under
the map $\cX = \bP[2,3] \to \bP^1$ to the coarse moduli space. 
We have
\begin{eqnarray*}
H^*_\orb(\cX) &=& H^0_\orb(\bP[2,3])\oplus H^\tword_\orb(\cX) \oplus H^1_\orb(\cX) \oplus H^\fourd_\orb(\cX) \oplus H^2_\orb(\cX), \\
A^*_\orb(\cX) &=& A^0_\orb(\cX)\oplus A^\onerd_\orb(\cX) \oplus A^\half_\orb(\cX) \oplus A^\tword_\orb(\cX) \oplus A^1_\orb(\cX), 
\end{eqnarray*} 
where
\begin{eqnarray*}
&& H^0_\orb(\cX) = A^0_\orb(\cX) = H^0(\cX_0) = A^0(\cX_0) = \bQ 1, \\
&& H^\tword_\orb(\cX) = A^\onerd_\orb(\cX) = H^0(\cX_2) = A^0(\cX_2) = \bQ 1_\onerd,\\
&& H^1_\orb(\cX) = A^\half_\orb(\cX) = H^0(\cX_1) = A^0(\cX_1) = \bQ 1_\half, \\
&& H^\fourd_\orb(\cX) = A^\tword_\orb(\cX) = H^0(\cX_3) = A^0(\cX_3) = \bQ 1_\tword,\\
&& H^2_\orb(\cX) = A^1_\orb(\cX) = H^2(\cX_0) = A^1(\cX_0) = \bQ H.
\end{eqnarray*}
\end{example}

\section{Orbifold Gromov-Witten Theory} \label{sec:orbGWreview}

In \cite{CR2}, Chen-Ruan developed Gromov-Witten
theory for symplectic orbifolds. The algebraic counterpart,
the Gromov-Witten theory for smooth DM stacks,
was developed by  Abramovich-Graber-Vistoli 
\cite{AGV1, AGV2}. In this section, we give a brief review
of algebraic orbifold Gromov-Witten theory, following \cite{AGV2}. 

\subsection{Twisted curves and their moduli} \label{sec:tw-curves}

An $n$-pointed, genus $g$  twisted curve is a connected proper one-dimensional
DM stack $\cC$ together with $n$ disjoint closed substacks $\fx_1,\ldots,\fx_n$ of $\cC$, such that
\begin{enumerate}
\item  $\cC$ is \'{e}tale locally a nodal curve; 
\item formally locally near a node, $\cC$ is isomorphic
to the quotient stack
$$
[\Spec(\bC[x,y]/(xy))/\mu_r],
$$ 
where the action of $\zeta\in \mu_r$ is given by $\zeta\cdot (x,y)= (\zeta x, \zeta^{-1}y)$;
\item each $\fx_i\subset \cC$ is contained in the smooth locus of $\cC$;
\item each stack $\fx_i$ is an  \'{e}tale gerbe over $\Spec\bC$ {\em with a section} (hence
trivialization); 
\item $\cC$ is a scheme outside the twisted points $\fx_1,\ldots, \fx_n$ and the singular locus;
\item the coarse moduli space $C$ is a nodal curve of arithmetic genus $g$.
\end{enumerate}
Let $\pi:\cC\to C$ be the projection to the coarse moduli space, and let
$x_i=\pi(\fx_i)$. Then $x_1,\ldots, x_n$ are distinct smooth points of
$C$, and $(C,x_1,\ldots, x_n)$ is an $n$-pointed, genus $g$ prestable curve.

Let $\cM^\tw_{g,n}$ be the moduli of $n$-pointed, genus $g$
twisted curves. Then $\cM^\tw_{g,n}$ is a smooth
algebraic stack, locally of finite type \cite{Ol}.

\subsection{Riemann-Roch theorem for twisted curves} \label{sec:orbRR}
Let $(\cC,\fx_1,\ldots, \fx_n)$ be an $n$-pointed, genus $g$ twisted curve, and
let $(C,x_1,\ldots, x_n)$ be the coarse curve, which is an $n$-pointed, genus
$g$ prestable curve.  Let $\cE \to \cX$ be a vector bundle over $\cX$. 
Then $\fx_i\cong \cB\mu_{r_i}$, and
$\zeta_{r_i}\in \mu_{r_i}$ acts on $\cE|_{\fx_i}$ with eigenvalues
$\zeta_{r_i}^{l_1},\ldots, \zeta_{r_i}^{l_N}$, where $l_i\in \{0,1,\ldots, r_i-1\}$ and
$N=\rank\cE$. Define
$$
\age_{x_i}(\cE) := \frac{l_1+\cdots + l_N}{r_i} \in \bQ. 
$$
The Riemann-Roch theorem for twisted curves says
\begin{equation}\label{eqn:twistedRR}
\chi(\cE)=\int_{\cC}c_1(\cE) + \rank(\cE)(1-g) -\sum_{i=1}^n \age_{x_i}(\cE).
\end{equation}

Given a real number $x$, let $\lfloor x\rfloor$ denote the largest
integer which is less or equal to $x$, and let 
$\langle x \rangle = x-\lfloor x \rfloor$.

\begin{example}
Let $\cC=\bP[2,3]$, $\fx_1=[0,1]$, $\fx_2=[1,0]$. Then
$(\cC, \fx_1,\fx_2)$ is a $2$-pointed, genus $0$ twisted curve.
The coarse moduli curve is $(C,x_1,x_2)=(\bP^1, [0,1], [1,0])$. 
Let $\cL_n = \cO_\cC(n \fx_2)$, where $n\in \bZ$. Then
$$
\int_{\cC}c_1(\cE)=\frac{n}{2},\quad \rank(\cL_n)=1,\quad
\age_{x_1}(\cL_n)=\langle \frac{n}{2}\rangle,\quad \age_{x_2}(\cL_n)=0,
$$
so 
$$
\chi(\cL_n)= 1+ \lfloor \frac{n}{2}\rfloor.
$$

Let $\pi:\cC=\bP[2,3]\to C=\bP^1$ be the projection to the coarse
moduli space. Then $\pi^*\cO_{\bP^1}(kx_2)= \cL_{2k}$. We have
$$
\chi(\cL_{2k})= k+1 = \chi(\cO_{\bP^1}(kx_2))
$$
as expected.

\end{example}

\subsection{Moduli of twisted stable maps}
Let $\cX$ be a proper smooth DM stack with a projective
coarse moduli space $X$, and let $\beta$ be an effective curve class in 
$X$.  An $n$-pointed, genus $g$, degree $\beta$ twisted
stable map to $\cX$ is a representable morphism $f:\cC\to \cX$, where
the domain $\cC$ is an $n$-pointed, genus $g$ twisted curve,
and the induced morphism $C\to X$ between the coarse moduli spaces
is an $n$-pointed, genus $g$, degree $\beta$ stable map to $X$.

Let $\MgcX$ be the moduli stack of $n$-pointed, genus $g$, degree $\beta$ twisted
stable maps to $\cX$. Then $\MgcX$ is a proper DM stack.

For $j=1,\ldots, n$, there are evaluation maps
$\ev_j:\MgcX\to \cIX$. Given $\vi=(i_1,\ldots,i_n)$, where
$i_j\in I$, define
$$
\MgXi :=\bigcap_{j=1}^n \ev_j^{-1}(\cX_{i_j}). 
$$
Then $\MgXi$ is a union of connected components
of $\MgcX$, and  $$
\MgcX=\bigsqcup_{\vi\in I^n}\MgXi.
$$
\begin{remark}
In the definition of twisted curves in Section \ref{sec:tw-curves}, if we replace (4) by
\begin{enumerate}
\item[(4)'] each stack $\fx_i$ is an  \'{e}tale gerbes over $\Spec\bC$;
\end{enumerate}
i.e. without a section, then the resulting moduli space is
$\cK_{g,n}(\cX,\beta)$ in \cite{AGV2}, and the evaluation maps take
values in the rigidified inertial stack $\overline{\cI}{\cX}$ instead of the initial
stack $\cI\cX$.
\end{remark}

\subsection{Obstruction theory and virtual fundamental classes}

The tangent space $T^1$ and the obstruction space $T^2$ at
a moduli point $[f:(\cC, \fx_1,\ldots, \fx_n)\to \cX] \in \MgcX$ fit
in the {\em tangent-obstruction exact sequence}:
\begin{equation}\label{eqn:tangent-obstruction}
\begin{aligned}
0 \to& \Ext^0_{\cO_\cC}(\Omega_\cC(\fx_1+\cdots + \fx_n) , \cO_\cC)\to H^0(\cC,f^*T_\cX) \to T^1  \\
  \to& \Ext^1_{\cO_\cC}(\Omega_\cC(\fx_1+\cdots + \fx_n), \cO_\cC)\to H^1(\cC,f^*T_\cX)\to T^2 \to 0
\end{aligned}
\end{equation}
where
\begin{itemize}
\item $\Ext^0_{\cO_\cC}(\Omega_\cC(\fx_1+\cdots+\fx_n),\cO_\cC)$ is the space of infinitesimal automorphisms of the domain $(\cC, \fx_1,\ldots, \fx_n)$, 
\item $\Ext^1_{\cO_\cC}(\Omega_\cC(\fx_1+\cdots + \fx_n), \cO_\cC)$ is the space of infinitesimal 
deformations of the domain $(\cC, \fx_1, \ldots, \fx_n)$, 
\item $H^0(\cC,f^*T_\cX)$ is the space of
infinitesimal deformations of the map $f$, and 
\item $H^1(\cC,f^*T_\cX)$ is
the space of obstructions to deforming the map $f$.
\end{itemize}
$T^1$ and $T^2$ form sheaves $\cT^1$ and $\cT^2$ on the
moduli space $\MgXi$. This defines a perfect obstruction theory
of virtual dimension
$$
d^\vir_\vi =\int_\beta c_1(T_\cX) + (\dim \cX-3)(1-g) +n -\sum_{j=1}^n\age(\cX_{i_j})
$$
on $\MgXi$, which defines a virtual fundamental class
$$
[\MgXi]^\vir\in A_{d^\vir_\vi}(\MgXi).
$$ 
The {\em weighted virtual fundamental class} is defined by 
$$
[\MgXi]^w := \Bigl(\prod_{j=1}^n r_{i_j}\Bigr) [\MgXi]^\vir.
$$

\subsection{Hurwitz-Hodge integrals}
\label{sec:hurwitz-hodge}

By Example \ref{ex:BG}, when $\cX=\cB G$ we have
$$
\cI \cB G =\bigsqcup_{c\in \Conj(G)} (\cB G)_c
$$
where $\Conj(G)$ is the set of conjugacy classes of $G$.
Give $\vc=(c_1,\ldots,c_n) \in \Conj(G)^n$, let 
$\MBGc = \Mbar_{g,\vc}(\cB G,\beta=0)$.
Then $\MBGc$ is a union of connected components
of $\MBG:=\Mbar_{g,n}(\cB G,0)$, and 
$$
\MBG=\bigsqcup_{\vc\in \Conj(G)^n}\MBGc. 
$$

We now fix a genus $g$ and $n$ conjugacy classes $\vc=(c_1,\ldots,c_n)\in \Conj(G)^n$.
Let $\pi: \cU\to \MBGc$ be the universal curve, and let $f: \cU\to \cB G$ be the universal
map. Let $\rho: G\to GL(V)$ be an irreducible representation of $G$, where $V$ is a finite
dimensional vector space over $\bC$. Then $\cE_\rho := [V/G]$ is
a vector bundle over $\cB G = [\pt/G]$. We have
$$
\pi_* f^* \cE_\rho =\begin{cases}
\cO_{\MBGc}, & \textup{if $\rho: G\to GL(1,\bC)$ is the trivial representation}, \\
0, & \textup{otherwise}.
\end{cases}
$$
The $\rho$-twisted Hurwitz-Hodge bundle $\bE_\rho$ can be defined as the 
dual of the vector bundle $R^1\pi_* f^* \cE_\rho$.   
If $\rho=1$ is the trivial representation, then
$\bE_1 = \ep^*\bE$, where $\ep:\Mbar_{g,\vc}(\cB G) \to \Mbar_{g,n}$, and
$\bE \to \Mbar_{g,n}$ is the Hodge bundle of $\Mbar_{g,n}$. So
$\rank \bE_1 =g$. If $\rho$ is a nontrivial irreducible representation, it follows
from the Riemann-Roch theorem for twisted curves (see Section \ref{sec:orbRR})  that
\begin{equation}
\rank \bE_\rho = \rank (\cE_\rho) (g-1) + \sum_{j=1}^n \age_{c_j}(\cE_\rho),
\end{equation}
where $\age_{c_j}(\cE_\rho)$ is given as follows.  Choose $g\in c_j$. Let $r>0$ be 
the order of $g$ in $G$,  let $N=\rank \cE_\rho =\dim V$.
If the eigenvalues of $\rho(g)\in GL(V)=GL(N,\bC)$ are $\zeta_r^{l_1},\ldots, \zeta_r^{l_N}$,
where $l_1,\ldots, l_N\in\{0,1,\ldots, r-1\}$,  then
$$
\age_{c_j}(\cE_\rho)=\frac{l_1+\cdots + l_N}{r}. 
$$
The definition is independent of choice of $g\in c_j$. The map
$\det\circ \rho: G\to GL(1,\bC)$ descends to a map $\det\circ \rho: \Conj(G)\to GL(1,\bC)$.
We have
$$
\prod_{j=1}^n \det\circ \rho(c_j) =1, 
$$
so
$$
\sum_{j=1}^n \age_{c_j}(\cE_\rho) \in \bZ. 
$$
Note that when $G$ is abelian, any irreducible representation of $G$ is 1-dimensional,
so $\rank(\cE_\rho)=1$ for any irreducible representation $\rho$ of $G$. 

\begin{itemize}
\item {\em Hodge classes.} Given an irreducible representation $\rho$ of $G$, define
$$
\lambda_i^\rho = c_i(\bE_\rho) \in A^i(\MBGc), \quad i=1,\ldots, \rank\bE_\rho. 
$$

\item {\em Descendant classes}.
There is a map $\epsilon: \MBGc\to \Mbar_{g,n}$. Define 
$$
\bar{\psi}_j = \epsilon^*\psi_j \in A^1(\MBGc),\quad j=1,\ldots,n.
$$
\end{itemize}

{\em Hurwitz-Hodge integrals} are top intersection numbers of
Hodge classes $\lambda_i^\rho$  and  descendant classes $\bar{\psi}_j$: 
\begin{equation}
\int_{\MBGc}\bar{\psi}_1^{a_1}\cdots \bar{\psi}_n^{a_n}
(\lambda_1^{\rho_1})^{k_1}\cdots (\lambda_g^{\rho_g})^{k_g}.
\end{equation}
In \cite{Zh}, J. Zhou described an algorithm of computing Hurwitz-Hodge integrals, as
follows. By Tseng's orbifold quantum Riemann-Roch theorem \cite{Ts}, Hurwitz-Hodge integrals 
can be reconstructed from descendant integrals on $\MBGc$:
\begin{equation}
\int_{\MBGc}\bar{\psi}_1^{a_1}\cdots \bar{\psi}_n^{a_n}. 
\end{equation}
Jarvis-Kimura relate the descendant integrals on $\MBGc$ to those on $\Mbar_{g,n}$ \cite{JK}. 
We now state their result. Given $g\in \bZ_{\geq 0}$ and $\vc=(c_1,\ldots,c_n)\in \Conj(G)^n$, let
$$
V^G_{g,\vc}:=\{(a_1,b_1,\ldots, a_g, b_g, e_1,\ldots, e_n)\in G^{2g+n}\mid \prod_{i=1}^g[a_i,b_i]=\prod_{j=1}^n e_j, e_j\in c_j\}.
$$ 
Then $\Mbar_{g,\vc}(\cB G)$ is nonempty iff $V^G_{g,\vc}$ is nonempty.
\begin{theorem}[{Jarvis-Kimura \cite[Proposition 3.4]{JK}}]\label{thm:orb-psi}
Suppose that $2g-2+n>0$ and $V^G_{g,\vc}$ is nonempty. Then
$$
\int_{\Mbar_{g,\vc}(\cB G)}\bar{\psi}_1^{a_1} \cdots \bar{\psi}_n^{a_n} =\frac{|V^G_{g,\vc}|}{|G|}\int_{\Mbar_{g,n}} \psi_1^{a_1}\cdots \psi_n^{a_n}.
$$
\end{theorem}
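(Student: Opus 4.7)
The plan is to exploit the defining relation $\bar{\psi}_j=\epsilon^*\psi_j$ and reduce the claim to computing the stacky degree of $\epsilon:\Mbar_{g,\vc}(\cB G)\to\Mbar_{g,n}$. Since $G$ is finite, $T_{\cB G}=0$, so every age on a sector of $\cI\cB G$ vanishes and the virtual dimension of $\Mbar_{g,\vc}(\cB G)$ equals $3g-3+n=\dim\Mbar_{g,n}$; moreover $H^1(\cC,f^*T_{\cB G})=0$, so $\Mbar_{g,\vc}(\cB G)$ is smooth of the expected dimension and its virtual class coincides with its ordinary fundamental class. The map $\epsilon$ is proper, and it is finite because its fiber over a smooth pointed curve classifies $G$-torsors with prescribed monodromy on a punctured surface of finitely generated $\pi_1$. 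Hence by the projection formula
$$
\int_{\Mbar_{g,\vc}(\cB G)}\bar{\psi}_1^{a_1}\cdots\bar{\psi}_n^{a_n}
=\int_{\Mbar_{g,n}}\epsilon_*(1)\cdot\psi_1^{a_1}\cdots\psi_n^{a_n},
$$
and since $\Mbar_{g,n}$ is irreducible with $A^0(\Mbar_{g,n})=\bQ$, the class $\epsilon_*(1)$ is a constant $d$ that we must identify with $|V^G_{g,\vc}|/|G|$.

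To compute $d$ I would evaluate the groupoid cardinality of the generic geometric fiber of $\epsilon$, over a point $[(C,x_1,\dots,x_n)]$ with $C$ smooth and $\Aut(C,x_1,\dots,x_n)=1$. A twisted stable map over such a $[C]$ is a $G$-torsor $P$ on the orbifold $\cC$ with monodromy at $\fx_j$ lying in $c_j$; restricting to $C^\circ:=C\setminus\{x_1,\dots,x_n\}$ and applying the Riemann existence theorem, such torsors are classified up to $G$-equivariant isomorphism by $G$-conjugacy classes of homomorphisms $\phi:\pi_1(C^\circ)\to G$ sending the standard loop around $x_j$ into $c_j$. Using the standard presentation of $\pi_1(C^\circ)$, the set of such $\phi$ is precisely $V^G_{g,\vc}$. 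A short lifting argument on the universal cover identifies the automorphism group of the corresponding twisted stable map with the centralizer $Z_G(\phi(\pi_1))$ — the rigidification of the marked gerbes (they carry sections) together with the triviality of $\Aut(C,x_1,\dots,x_n)$ kill all other potential contributions. The same centralizer is the stabilizer of $\phi$ in the $G$-conjugation action on $V^G_{g,\vc}$, so orbit-stabilizer gives
$$
d=\sum_{[\phi]\in V^G_{g,\vc}/G}\frac{1}{|Z_G(\phi(\pi_1))|}=\frac{|V^G_{g,\vc}|}{|G|}.
$$

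Substituting $\epsilon_*(1)=d$ into the projection formula yields the stated identity. The expected main obstacle is the last step: making precise that the stacky degree of $\epsilon$ really equals the weighted fiber count above, and justifying the identification of automorphisms of twisted stable maps over a generic $[C]$ with centralizers of the monodromy representation. This is essentially bookkeeping with the étale-local structure of $G$-torsors on $C^\circ$ and with the sections of the marked gerbes, but once it is in place the theorem reduces to the orbit-stabilizer computation above.
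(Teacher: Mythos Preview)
The paper does not supply a proof of this statement; it is quoted from Jarvis--Kimura \cite[Proposition 3.4]{JK} and used as input for the Hurwitz--Hodge algorithm described immediately after. Your argument is the standard one and is essentially the proof in \cite{JK}: pull back via $\bar\psi_j=\epsilon^*\psi_j$, apply the projection formula, and identify $\epsilon_*(1)\in A^0(\Mbar_{g,n})=\bQ$ with the stacky degree of $\epsilon$, computed over a generic smooth $(C,x_1,\dots,x_n)$ with trivial automorphisms by counting $G$-torsors on $C^\circ$ with prescribed monodromy classes, weighted by the inverse of their automorphism groups. The identification of those automorphism groups with centralizers and the orbit--stabilizer identity then give the factor $|V^G_{g,\vc}|/|G|$.

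Your sketch is correct. The one place to be careful, which you already flag, is matching automorphisms of a twisted stable map to $\cB G$ with the centralizer $Z_G(\phi(\pi_1))$: one must check that the section of each marked gerbe (item (4) in the paper's definition of twisted curves) neither adds nor kills automorphisms relative to the torsor picture, and that no ghost automorphisms of the domain orbicurve survive over a generic base point. Both points are routine and are handled in \cite{JK}.
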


When $G$ is abelian, each $c_i$ is an element in $G$. 
$V^G_{g,\vc}$ is nonempty iff $c_1 \cdots c_n=1$, and in this case $V^G_{g,\vc} = G^{2g}$.
\begin{corollary}\label{cor:abelian-psi} 
Let $G$ be a finite abelian group.
Suppose that $2g-2+n>0$, and $\vc=(c_1,\ldots, c_n)\in G^n$, where $c_1\cdots c_n=1$. Then
$$
\int_{\Mbar_{g,\vc}(\cB G)}\bar{\psi}_1^{a_1} \cdots \bar{\psi}_n^{a_n} 
= |G|^{2g-1} \int_{\Mbar_{g,n}} \psi_1^{a_1}\cdots \psi_n^{a_n}.
$$
\end{corollary}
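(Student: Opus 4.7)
The plan is to deduce this as a direct specialization of Theorem \ref{thm:orb-psi}. The only genuine work is to evaluate $|V^G_{g,\vc}|$ when $G$ is abelian and to check that nonemptiness of $V^G_{g,\vc}$ is equivalent to $c_1\cdots c_n = 1$.

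First I would unwind the definition of $V^G_{g,\vc}$ in the abelian setting. Since conjugation is trivial in $G$, each conjugacy class $c_j$ is a singleton, which we identify with the group element $c_j \in G$ itself; the condition $e_j \in c_j$ then forces $e_j = c_j$. Moreover every commutator $[a_i,b_i] = a_i b_i a_i^{-1} b_i^{-1}$ equals $1$ in an abelian group, so $\prod_{i=1}^g [a_i,b_i] = 1$. The defining relation $\prod_{i=1}^g [a_i,b_i] = \prod_{j=1}^n e_j$ thus collapses to $1 = c_1 \cdots c_n$. Consequently, if $c_1 \cdots c_n \neq 1$ then $V^G_{g,\vc} = \emptyset$ and both $\Mbar_{g,\vc}(\cB G)$ and the claimed identity are vacuous. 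If $c_1 \cdots c_n = 1$, then the $e_j$-coordinates are determined while the $(a_i,b_i)$-coordinates range freely over $G^{2g}$, giving $|V^G_{g,\vc}| = |G|^{2g}$.

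Substituting this count into Theorem \ref{thm:orb-psi} yields
\begin{equation*}
\int_{\Mbar_{g,\vc}(\cB G)} \bar{\psi}_1^{a_1} \cdots \bar{\psi}_n^{a_n}
= \frac{|G|^{2g}}{|G|} \int_{\Mbar_{g,n}} \psi_1^{a_1}\cdots \psi_n^{a_n}
= |G|^{2g-1} \int_{\Mbar_{g,n}} \psi_1^{a_1}\cdots \psi_n^{a_n},
\end{equation*}
which is the claim. The hypothesis $2g-2+n>0$ is precisely what is needed to apply Theorem \ref{thm:orb-psi} (and to make $\Mbar_{g,n}$ nonempty so that the descendant classes $\bar{\psi}_j = \epsilon^* \psi_j$ are well-defined via $\epsilon: \Mbar_{g,\vc}(\cB G) \to \Mbar_{g,n}$).

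There is no real obstacle here: the entire content is the combinatorial evaluation of $|V^G_{g,\vc}|$, which is essentially immediate once abelian-ness is invoked. The only point worth flagging is to make explicit in the write-up the identification of singleton conjugacy classes with group elements, so that the statement of the corollary (which writes $\vc \in G^n$ rather than $\vc \in \Conj(G)^n$) lines up with the notation of Theorem \ref{thm:orb-psi}.
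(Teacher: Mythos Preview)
Your proof is correct and follows exactly the paper's approach: the paper notes just before the corollary that in the abelian case $V^G_{g,\vc}$ is nonempty iff $c_1\cdots c_n=1$, in which case $V^G_{g,\vc}=G^{2g}$, and then applies Theorem~\ref{thm:orb-psi}. Your write-up simply makes explicit the elementary observations (trivial commutators, singleton conjugacy classes) behind that count.
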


\subsection{Orbifold Gromov-Witten invariants} \label{sec:orbGW}
There is a morphism $\epsilon:\MgXi \to \MgX$. Define $\bar{\psi}_i = \epsilon^* \psi_i$.
Let
$$
\gamma_j\in A^{d_j}(\cX_{i_j}) \subset A^{d_j+\age(\cX_{i_j})}_\orb(\cX).
$$
Define orbifold Gromov-Witten invariants
\begin{equation}\label{eqn:orbifoldGWprimary}
\langle \bar{\tau}_{a_1}\gamma_1,\ldots, \bar{\tau}_{a_n}\gamma_n\rangle_{g,\beta}^\cX
:=\int_{[\MgXi]^w} \prod_{j=1}^n \ev_j^* \gamma_j \bar{\psi}_j^{a_j} 
\end{equation}
which is zero unless
$$
\sum_{j=1}^n (d_j+ \age(\cX_{i_j}) + a_j) =\int_\beta c_1(T_\cX)+(1-g)(\dim \cX-3)+n.
$$
More generally, let
$$
\gamma_j\in H^{d_j}(\cX_{i_j}) \subset H^{d_j+2\age(\cX_{i_j})}_\orb(\cX),
$$
and define orbifold Gromov-Witten invariants \eqref{eqn:orbifoldGWprimary}.
Then it is zero unless
$$
\sum_{j=1}^n (d_j+ 2\age(\cX_{i_j})+2a_j) =2\left( \int_\beta c_1(T_\cX)+(1-g)(\dim \cX-3)+n\right).
$$

\section{Toric Deligne-Mumford Stacks} \label{sec:toric-stacks}

In \cite{BCS}, Borisov, Chen, and Smith defined toric DM stacks
in terms of stacky fans. Toric DM stacks are smooth DM
stacks, and their coarse moduli spaces are simplicial toric varieties.
A toric DM stack is called  a {\em toric orbifold} if its generic stabilizer is trivial.
Later, more geometric definitions of toric orbifolds and
toric DM stacks are given by Iwanari \cite{Iw1, Iw2}
and by Fantechi-Mann-Nironi \cite{FMN}, respectively.

\subsection{Stacky fans} \label{sec:stacky}

In this subsection, we recall the definition of stacky fans. 
Let $N$ be a finitely generated abelian group, and let $N_\bR=N\otimes_\bZ\bR$.
We have a short exact sequence of abelian groups:
$$
1\to \Ntor\to N\to \baN=N/\Ntor \to 1,
$$
where $\Ntor$ is the subgroup of torsion elements in 
$N$. Then $\Ntor$ is a finite abelian group, and $\baN\cong \bZ^r$,
where $r=\dim_\bR N_\bR$. 
The natural projection $N\to \baN$ is denoted by $b\mapsto \bab$.

Let $\Si$ be a simplicial fan in $N_\bR$ (see \cite{Fu2}), and let 
$\Si(1)=\{\rho_1,\ldots,\rho_s\}$ be
the set of 1-dimensional cones in the fan $\Si$.
We assume that $\rho_1,\ldots, \rho_s$ span $N_\bR$, and 
fix $b_i\in N$ such that $\rho_i=\bR_{\geq 0}\bab_i$. 
A \emph{stacky fan} $\bSi$ is defined as the data
$(N,\Si,\beta)$, where  
$\beta:\tN:=\oplus_{i=1}^s \bZ \tb_i\cong \bZ^s\to N$
is a group homomorphism defined by  $\tb_i\mapsto b_i$.
By assumption, the cokernel of $\beta$ is finite.

We introduce some notation. 
\begin{enumerate}
\item $M=\Hom(N,\bZ)=\Hom(\baN,\bZ)\cong (\bZ^r)^*$. 
\item $\tM=\Hom(\tN,\bZ)\cong (\bZ^s)^*$. 
\item Let $\Si(d)$ be the set of $d$-dimensional
cones in $\Si$. Given $\si\in \Si(d)$, let 
$N_\si\subset N$ be the subgroup generated by
$\{ b_i\mid \rho_i\subset \si\}$, 
and let $\baN_\si$ be the rank $d$ sublattice of $\baN$
generated by $\{\bab_i\mid \rho_i\subset \si\}$. Let
$M_\si=\Hom(\baN_\si,\bZ)$ be the dual lattice of $\baN_\si$.
\end{enumerate}

Given $\si\in \Si(d)$, the surjective group homomorphism $N_\si\to \baN_\si$ induces
an injective group homomorphism $\Hom(\baN_\si,\bZ)\to \Hom(N_\si,\bZ)$
which is indeed an isomorphism. So $\Hom(N_\si,\bZ)\cong M_\si\cong \bZ^d$.

\subsection{The Gale dual} \label{sec:gale}
The finite abelian group $N_{\mathrm{tor}}$ is of 
the form $\oplus_{j=1}^l\bZ_{a_j}$.
We choose a projective resolution of $N$:
$$
0\to \bZ^l\stackrel{Q}{\to }\bZ^{r+l}\to N\to 0.
$$
Choose a map $B:\tN\to \bZ^{r+l}$ lifting
$\beta:\tN \to N$. Let $\pr_1:\tN\oplus \bZ^l\to \tN$
and $\pr_2:\tN\oplus\bZ^l\to \bZ^l$ be projections
to the first and second factors, respectively. We have the
following commutative diagram:
$$
\xymatrix{
    & &
\tN\oplus \bZ^l \ar[r]^{\pr_1}\ar[d]_{B\oplus Q}\ar[dl]_{\pr_2} & \tN \ar[d]^\beta \ar[dl]^B   & \\
0 \ar[r] &  \bZ^l  \ar[r]^Q &  \bZ^{r+l} \ar[r] & N\ar[r] & 0
} 
$$

Define the dual group $DG(\beta)$ to be the the cokernel of 
$B^*\oplus Q^*:(\bZ^{r+l})^*\longrightarrow \tM\oplus (\bZ^l)^*$.
The Gale dual of the map $\beta:\tN\to N$ is $\beta^\vee:\tM\to DG(\beta)$. 

$$
\xymatrix{
& 0 & \\
& DG(\beta)\ar[u] & \\
& \tM\oplus (\bZ^l)^* \ar[u] & \tM \ar[l]_{\pr_1^*}\ar[ul]_{\beta^\vee} \\
\bZ^l \ar[ur]^{\pr_2^*} & (\bZ^{r+l})^* \ar[u]^{ B^*\oplus Q^* }\ar[ur]_{B^*}\ar[l]_{Q^*} & \\
}
$$

\subsection{Construction of the toric DM stack} \label{sec:construct}
We follow \cite[Section 3]{BCS}.
Applying $\Hom(-,\bC^*)$ to $\beta^\vee:\tM\to DG(\beta)$, one obtains
$$
\phi: G_{\bSi}:=\Hom(DG(\beta),\bC^*)\to \tT:=\Hom(\tM,\bC^*).
$$
Let $G=\Ker\phi$. Then $G\cong \prod_{j=1}^l \mu_{a_j}$, where $\mu_{a_j}\subset \bC^*$ is 
the group of $a_j$-th roots of unity, which is isomorphic to $\bZ_{a_j}$. 
Let $\cB G$ denote the quotient stack $[\{1\}/G]$.
The algebraic torus $\tT$ acts on $\bC^s$ by 
$$
(\tit_1,\ldots, \tit_s)\cdot(z_1,\ldots, z_s)=(\tit_1 z_1,\ldots,\tit_s z_s),\quad
(\tit_1,\ldots, \tit_s)\in \tT,\quad (z_1,\ldots, z_s)\in \bC^s. 
$$
Let $G_{\bSi}$ act on $\bC^s$ by $g\cdot z:=\phi(g)\cdot z$, where $g\in G_{\bSi}$, $z\in \bC^s$.
Let $\cO(\bC^s) = \bC[z_1,\ldots, z_s]$ be the coordinate ring of $\bC^s$. Let $I_\Si$ be the
ideal of $\cO(\bC^s)$ generated by
$$
\{ \prod_{\rho_i \not\subset \si} z_i : \si\in \Si\}
$$
and let $Z(I_\Si)$ be the closed subscheme of $\bC^s$ defined by $I_\Si$.
Then $U:=\bC^s-Z(I_\Si)$ is a quasi-affine variety over $\bC$. The toric DM stack
associated to the stacky fan $\bSi$ is defined to be the quotient stack
$$
\cX_{\bSi}:= [U/G_{\bSi}].
$$
It is a smooth DM stack whose generic stabilizer is $G$, 
and its coarse moduli space is the toric variety $X_\Si$ defined by 
the simplicial fan $\Si$. There is an open dense immersion
$$
\iota: \cT=[ \tT/G_{\bSi} ] \hookrightarrow \cX_{\bSi} = [U/G_{\bSi}],
$$
where $\cT\cong (\bC^*)^r\times \cB G$ is a DM torus.
The action of $\cT$ on itself extends to an action 
$a:\cT\times \cX_{\bSi}\to \cX_{\bSi}$.

\begin{example}[weighted projective spaces]
Let $w_1,\ldots, w_{r+1}$ be positive integers. The weighted
projective space $\bP[w_1,\ldots, w_{r+1}]$ is defined
to be the quotient stack
$$
[ {( \bC^{r+1}-\{0\})/\bC^*} ],
$$
where $\bC^*$ acts on $\bC^{r+1}-\{0\}$ by 
$$
\la\cdot (z_1,\ldots,z_{r+1})=(\la^{w_1} z_1,\ldots, \la^{w_{r+1}} z_{r+1}).
$$
$\bP[w_1,\ldots, w_{r+1}]$ is a smooth DM stack. It is an orbifold
if and only if $g.c.d.(w_1,\ldots, w_{r+1} )=1$.  We will show
that it is indeed a toric DM stack defined by some stacky
fan $\bSi=(\Si,N,\beta)$.

Let $e= g.c.d.(w_1,\ldots, w_{r+1})\in \bZ_{>0}$, so 
that $(w_1,\ldots, w_{r+1})= e(w_1',\ldots, w_{r+1}')$, where
$w_1',\ldots, w_{r+1}'$ are positive integers such that
$g.c.d.(w_1',\ldots, w_{r+1}')=1$. Define
$$
\tN= \bigoplus_{i=1}^{r+1}\bZ \tb_i \cong \bZ^{r+1}.
$$
Define $\tb_0 := \sum_{i=1}^{r+1} w'_i \tb_i$, which is a primitive
vector in the lattice $\tN$, and define  
$$
\baN= \tN/ \bZ \tb_0 \cong \bZ^r.
$$
Applying $\Hom(-,\bZ)$ to the surjective  map $\tN\to \baN$, we obtain
an injective map 
$$
i: M=\Hom(\baN,\bZ) \to \tM=\Hom(\tN,\bZ)
$$
where $M$ can be identified with the following rank $r$ sublattice of $\tM$:
$$
M=\{ \tm\in \tM\mid \langle \tm,\tb_0\rangle =0\}.
$$

Let $\bab_i \in \baN$ be image of $\tb_i$. 
Define $N=\baN\oplus \bZ/e\bZ$, and let $b_i=(\bab_i,1)$.
Define $\beta:\tN\to N$ by 
$\beta(\tb_i) =b_i$.  
A projective resolution of $N$ is given by 
$$
0\to \bZ \stackrel{Q}{\to}\baN\oplus \bZ\to N =\baN\oplus \bZ/e\bZ\to 0,
$$
where $Q(1)= (0,e)$. The map $\beta:\tN\to N$ can be lifted to
$B:\tN \to \baN\oplus \bZ$, $\tb_i \mapsto (\bab_i,1)$. 
Let $\{ \tb^*_1,\ldots, \tb^*_{r+1}\}$ be the $\bZ$-basis of $\tM$ dual 
to the $\bZ$-basis $\{ \tb_1,\ldots, \tb_{r+1}\}$ of $\tN$. 
The map $B^*\oplus Q^*: M\oplus \bZ\to \tM\oplus \bZ$
is given by
$$
(m,0)\mapsto (i(m),0),\quad (0,1)\mapsto (\sum_{i=1}^{r+1} \tb_i^*,e).
$$
The map  $\tM\oplus \bZ\to DG(\beta)=\bZ$ is given by 
$$
(\tb_i^*,0)\mapsto w_i\quad (0,1)\mapsto \sum_{j=1}^{r+1} w_j'.
$$
Applying $\Hom(-,\bC^*)$ to $[w_1 \  \cdots \ w_{r+1}] :\tM=\bZ^{r+1}\to DG(\beta)=\bZ$,
we obtain
$$
\phi: G_\bSi =\bC^* \to \tT=\Hom(\tM,\bC^*)=(\bC^*)^{r+1},
\quad \la\mapsto (\la^{w_1}, \ldots, \la^{w_{r+1}}).
$$
Therefore,
$$
\cX_{\bSi} =(\bC^{r+1}-\{0\})/G_\bSi = \bP[w_1,\ldots, w_{r+1}].
$$
\end{example}

\begin{example}[complete $1$-dimensional toric orbifolds]
\label{ex:Css}
Suppose that $\bSi=(\Si,N,\beta)$ is a stacky fan which defines
a $1$-dimensional complete toric orbifold $\cX_\bSi$. The coarse
moduli space $X_\Si$ must be $\bP^1$, the unique  $1$-dimensional 
complete simplicial toric variety. So we have
$$
N=\bZ,\quad \tN=\bZ^2,\quad  v_1=1,\quad  v_2=1 \quad  b_1=s_1,\quad  b_2=-s_2,
$$
where $s_1,s_2$ are positive integers. Let $\bSi_{s_1,s_2}$ denote the stacky fan 
$$
(\Si, N=\bZ, \beta=[ \ s_1\  -s_2\ ]), 
$$ 
and let $G_{s_1,s_2}=G_{\bSi_{s_1,s_2}}$.
There is a commutative diagram
\begin{equation}\label{eqn:quasi}
\xymatrix{
1 \ar[r] & G_{s_1,s_2} \ar[r]^{\phi_{s_1,s_2}} \ar[d]^{\hat{p}_{s_1,s_2}} 
& \tT=(\bC^*)^2 \ar[r]^{\pi_{s_1,s_2}} \ar[d]^{\tp_{s_1,s_2}} 
& T=\bC^* \ar[r] \ar[d]^{p} & 1\\
1 \ar[r] & G_\Si =\bC^* \ar[r]^{\phi} 
& \tT=(\bC^*)^2 \ar[r]^{\pi} & T=\bC^* \ar[r] &1
}
\end{equation}
where the rows are short exact sequences of abelian groups.
The arrows are group homomorphisms given explicitly as follows:
$$
\tp_{s_1,s_2}(\tit_1,\tit_2)= (\tit_1^{s_1}, \tit_2^{s_2}),\quad p(t)=t, 
\quad \pi_{s_1,s_2}(\tit_1,\tit_2)= \tit_1^{s_1} \tit_2^{-s_2},\quad \pi(\tit_1,\tit_2)= \tit_1 \tit_2^{-1}.
$$
\begin{eqnarray*}
& G_{s_1,s_2} = \Ker (\pi_{s_1,s_2}) = \{(\tit_1,\tit_2)\in \tT= (\bC^*)^2\mid  \tit_1^{s_1} \tit_2^{-s_2}=1\}\\
& G_\Si = \Ker (\pi) =\{(\tit_1,\tit_2)\in \tT=(\bC^*)^2\mid \tit_1 \tit_2^{-1}=1\}
\end{eqnarray*}

Following \cite{Jo}, let $\cC_{s_1,s_2}$ be the toric orbifold defined by the stacky fan 
$\bSi_{s_1,s_2}$:
$$
\cC_{s_1,s_2}:= \cX_{\bSi_{s_1,s_2}} =\left[(\bC^2-\{(0,0)\})/G_{s_1,s_2} \right].
$$
Note that Example \ref{ex:WPone} is a special case of this: $\bP[2,3]=\cC_{3,2}$.
More generally, when $s_1$ and $s_2$ are relatively prime, 
$G_{s_1,s_2}\cong \bC^*$ and 
$$
\cC_{s_1,s_2}=[ (\bC^2-\{(0,0)\})/\bC^*] =\bP[s_2,s_1]
$$
where $\bC^*$ acts on $\bC^2$ by $\la\cdot (z_1,z_2)= (\la^{s_2} z_1, \la^{s_1} z_2)$.
In general, $G_{s_1,s_2}\cong \bC^*\times \mu_d$, where $d=g.c.d.(s_1,s_2)$ (see
\cite[Example 7.29]{FMN}).

The coarse moduli space of $\cC_{s_1,s_2}$ is the projective line:
$$
X_\Si=(\bC^2-\{(0,0)\})/\bC^*=\bP^1,
$$ 
where $\bC^*$ acts on $\bC^2$ by  $\la\cdot (z_1,z_2)=(\la z_1, \la z_2)$.

We have
$$
\mathcal{I}\cC_{s_1,s_2}= \coprod_{\tiny\begin{array}{c} v\in \bZ\\ -s_2< v <s_1 \end{array}} \cC_{s_1,s_2,v}
$$
where 
$$
\cC_{s_1,s_2,v} =\begin{cases}
\cB\mu_{s_1}, & 1\leq v\leq s_1-1,\\ 
\cC_{s_1,s_2}, & v=0,\\
\cB\mu_{s_2}, & 1-s_2 \leq v\leq  -1,
\end{cases}
$$
and
$$
\Ob(\cC_{s_1,s_2,v})= \begin{cases} 
\{ ((0,1), \zeta_{s_1}^v) \}, & 1 \leq v\leq  s_1-1, \\ 
\{ ((x,y), 1)\mid (x,y)\in \bC^2-\{0\} \}, & v=0, \\
\{ ((1,0), \zeta_{s_2}^{-v}) \}, &  1-s_2\leq v \leq -1. 
\end{cases}
$$

We have
$$
\iota_0:\cC_{s_1,s_2,0} \to \cC_{s_1,s_2,0},
$$
and
$$
\iota_v:\cC_{s_1,s_2,v} \to \begin{cases} \cC_{s_1,s_2,s_1-v}, & 1\leq v\leq s_1-1,\\
\cC_{s_1,s_2, s_2+v}, &  1-s_2 \leq v\leq  -1.
\end{cases}
$$

\end{example}

\subsection{Rigidification} \label{sec:rigidification}

We define the {\em rigidification} of $\bSi=(N,\Si,\beta)$ to be the
stacky fan  $\bSi^\rig:=(\bar{N}, \Si, \bar{\beta})$, where
$\bar{\beta}$ is the composition of $\beta:\tN\to N$ with the projection $N\to \baN$.  
Note that $M$, $\baN_\si$, and $M_\si$ defined in Section 
\ref{sec:stacky} depend only on $\bSi^\rig$.
The generic stabilizer of the toric DM stack $\cX_{\bSi^\rig}$
is trivial because $\baN\cong \bZ^n$ is torsion free. So
$\cX_{\bSi^\rig}$ is a toric orbifold.
There is a morphism of stacky fans $\bSi\to \bSi^\rig$
which induces a morphism of toric DM stacks
$\pi^\rig: \cX_{\bSi}\to \cX_{\bSi^\rig}$. The toric orbifold $\cX_{\bSi^\rig}$ is 
called the {\em rigidification} of the toric DM stack $\cX_{\bSi}$.
The morphism $\pi^\rig:\cX_\bSi\to \cX_{\bSi^\rig}$ makes $\cX_\bSi$ 
a $G$-gerbe over $\cX_{\bSi^\rig}$.
 
$G_{\bSi^\rig}= G_{\bSi}/G$ is a subgroup 
of $\tT$. Let $T:=\tT/G_{\bSi^\rig}\cong (\bC^*)^r$.
There is an open dense immersion
$$
\iota^\rig: T=[ \tT/G_{\bSi^\rig}]\hookrightarrow \cX_{\bSi^\rig} = [U/G_{\bSi^\rig}].
$$

\subsection{Lifting the fan}
\label{sec:lift}

Let $\bSi=(N,\Si,\beta)$ be a stacky fan, where $N\cong \bZ^r$.
Let $U$ be defined as in Section \ref{sec:construct}.
The open embedding $U\hookrightarrow \bC^s$ is $\tT$-equivariant, and can
be viewed as a morphism between smooth toric varieties. More explicitly, consider
the $s$-dimensional cone
$$
\tsi_0 =\Cone(\{ \tb_1,\ldots,\tb_s\})\subset \tN_\bR =\tN\otimes_\bZ \bR,
$$
and let  $\tSi_0 \subset \tN_\bR$ be the fan which consists of all
the faces of $\tsi_0$. Then $\bC^s$ is the smooth toric variety defined by
the fan $\tSi_0$. We define a subfan $\tSi \subset \tSi_0$ as follows.
Given $\si\in \Si(d)$, such that $\si\cap \{ \bar{b}_1,\ldots, \bar{b}_s\}
=\{ \bar{b}_{i_1},\ldots, \bar{b}_{i_d}\}$, let
$$
\tsi = \Cone(\{ \tb_{i_1},\ldots,\tb_{i_d}\}) \subset \tN_\bR.
$$
Then there is a bijection $\Si\to \tSi$ given by $\si\mapsto \tsi$, and $U$ is the smooth toric
variety defined by $\tSi$.

For any $d$-dimensional cone $\tsi\in \tSi$, let $I=\{ i\mid \rho_i\subset \si\}$, and define
\begin{eqnarray*}
U_\tsi&=& \Spec\bC[\tsi^\vee\cap \tM]= \bC^s-\{\prod_{i\notin I}z_i=0\}\\
&=& \{ (z_1,\ldots,z_s)\in \bC^s\mid z_i \neq 0 \textup{ if }i\notin I\} \cong \bC^d\times (\bC^*)^{s-d},\\
O_{\tsi}&=& \{(z_1,\ldots, z_s)\in \bC^s\mid z_i=0 \textup{ iff }i \in I\} \cong (\bC^*)^{s-d}\\
V(\tsi)&=& \{(z_1,\ldots, z_s)\in \bC^s\mid z_i=0 \textup{ if }i\in I\} \cong \bC^{s-d}\\
\tT_{\tsi}&=& \{ (\tit_1,\ldots, \tit_s)\in \tT\mid \tit_i=1 \textup{ for } i\notin  I\} \cong (\bC^*)^d.
\end{eqnarray*}
Then 
\begin{itemize}
\item $U_\tsi$ is a Zariski open subset of $U$.
\item $O_{\tsi}$ is an orbit of the $\tT$-action on $U$. The
stabilizer of the $\tT$-action on $O_{\tsi}$ is 
$\tT_{\tsi}$,  so $O_{\tsi}=\tT/\tT_{\tsi}$.
\item $V(\tsi)$ is a closed subvariety of $U$.
\end{itemize}
Let $G_\si=\phi^{-1}(\tT_\tsi)$ be the stabilizer of 
$G_\bSi$-action on $O_{\tsi}$. Then $G_\si$ is a finite
abelian group. In particular, when $\si=\{0\}$ is the
zero dimensional cone, $G_{\{0\}}=\Ker\phi =G$
is the generic stabilizer.
Note that if $\si\subset \si'$ then
$\tT_{\tsi}\subset \tT_{\tsi'}$ and $G_\si\subset G_{\si'}$.

We have  $\tT$-equivariant open embeddings
$$
\tT \hookrightarrow X_{\tSi}=U \hookrightarrow X_{\tSi_0}=\bC^s.
$$

We define
$$
\cX_\si:= [U_\tsi/G_\bSi],\quad
\bfV(\si)=[V(\tsi)/G_\bSi],\quad
\bfO_\si=[O_{\tsi}/G_\bSi].
$$
Then 
\begin{itemize}
\item $\cX_\si$ is an open substack of $\cX$.
\item $\bfO_\si$ is an orbit of the $\cT$-action on $\cX$.
\item $\bfV(\si)$ is the closure of $\bfO_\si$.
\item $\bfV(\si)\to \bfV(\si)^\rig$ is a $G_\si$-gerbe.
\end{itemize}

The $\tT$-equivariant line bundles on $U_\tsi=\Spec\bC[\tsi^\vee\cap \tM]$
are in one-to-one correspondence with characters in $\Hom(\tT_\tsi,\bC^*)$.
Moreover, we have canonical isomorphisms
$$
\Hom(\tT_{\tsi},\bC^*) \cong \tM/(\tsi^\perp\cap \tM) \cong M_\si.
$$
Given $\chi \in M_\si$, let $\cO_{U_\tsi}(\chi)$  denote the  $\tT$-equivariant
line bundle on $U_\tsi$ associated to $\chi\in M_\si$, and
let $\cO_{\cX_\si}(\chi)$ denote the corresponding $\cT$-equivariant line bundle on $\cX_\si= [U_{\tsi}/G_{\bSi}]$.
Let $\tchi\in\tM$ be any representative of the coset
$\chi\in \tM/(\tsi^\perp\cap \tM) \cong M_\si$. The $T$-weights of
$\Gamma(\cX_\si,\cO_{\cX_\si}(\chi))$ are in one-to-one correspondence with points
in $(\chi+\si^\vee)\cap M$.

More generally, a $\tT$-equivariant coherent sheaf on $U$ descends
to a $\cT$-equivariant coherent sheaf on $\cX =[U/G_\bSi]$; indeed,
we may regard this as the definition of a $\cT$-equivariant
coherent sheaf on $\cX$. Composing the map 
$T\to \cT=[T/G]$ with
the $\cT$-action $a: \cT\times \cX \to \cX$ on the toric DM stack $\cX$,  we obtain a
$T$-action $\bar{a}: T\times \cX\to \cX$ on $\cX$.
Following Kresch \cite{Kr}, we define the $\cT$-equivariant Chow groups of the stack $\cX$ to be the Chow groups of the Artin stack
$[ \cX/ \cT]$:
$$
A_{\cT}^* (\cX): = A^*([\cX/\cT]),\quad A_{\cT}^*(\cX;\bZ) := A^*([\cX/\cT];\bZ).
$$
The identification of stacks 
$$
[\cX/ \cT] = [U/\tT]
$$ 
implies that we may identify these Chow groups with  the $\tT$-equivariant Chow groups of $U$:
$$
A_{\cT}^*(\cX)= A^*_{\tT}(U), \quad  A_{\cT}^*(\cX;\bZ) = A^*_{\tT}(U;\bZ).
$$
Note that we have an isomorphism of rational Chow groups 
$$
A^*_{\cT}(\cX) = A^*_T(\cX).
$$ 
As the following example shows, this isomorphism does not generally hold for integral Chow groups.
\begin{example}
Let $\cX=\bP[w]$ be the zero dimensional weighted projective
space, where $w$ is an integer and $w>1$. 
Then $\cX=\cT=\cB \mu_{w}$ and $T=\{1\}$. 
$$
A^1_{\cT}(\cX;\bZ) = 0, \quad A^1_T(\cX;\bZ) = \bZ/w\bZ.
$$
\end{example}

Let $\cV$ be a $\cT$-equivariant vector bundle over $\cX$. 
Under the identification $A^*_T(\cX) = A^*_\cT(\cX)$
(or equivalently,  $H^*_T(\cX)=H^*_{\cT}(\cX)$),  the
$T$-equivariant Chern classes of $\cV$ are equal
to the $\cT$-equivariant Chern classes of $\cV$: 
$$
c^T_k(\cV) = c^\cT_k(\cV), \quad 0\leq k\leq \rank\cV.
$$

\begin{example}\label{ex:edge-orb} 
Let $\cX= \cC_{s_1,s_2}$ be defined as in Example \ref{ex:Css}. Let
$\fp_1=[0,1]$ and $\fp_2=[1,0]$ be the two $T$-fixed (stacky) points in $\cC_{s_1,s_2}$. 
Then any $T$-equivariant line bundle on $\cC_{s_1,s_2}$ is of the form
$$
\cL_{c_1,c_2}=\cO_\cX(c_1 \fp_1 + c_2 \fp_2),\quad c_1, c_2\in \bZ.
$$
We will compute
$$
ch^T\bigl(H^0(\cX,\cL_{c_1,c_2}) - H^1(\cX,\cL_{c_1,c_2}) \bigr).
$$
We have
$$
N=\bZ, \quad \Sigma =\{ \{0\}, \quad \rho_1 = [0,\infty),\quad \rho_2=(-\infty,0] \} 
$$
Let
$$
\cX_1 =\cX_{\rho_1},\quad \cX_2 =\cX_{\rho_2}, \quad \cX_{12} = \cX_{ \{0\} } = \cX_1\cap \cX_2 =T =\bC^*. 
$$
The cohomology groups $H^0(\cX,\cL_{c_1,c_2})$ and $H^1(\cX,\cL_{c_1,c_2})$ are the kernel and
cokernel of the following \v{C}ech complex:
$$
0\to \Gamma(\cX_1,\cL_{c_1,c_2})\oplus \Gamma(\cX_2,\cL_{c_1,c_2}) \stackrel{\delta}{\to} \Gamma(\cX_{12},\cL_{c_1,c_2})\to 0, 
$$
where $\delta(s_1, s_2) = s_1\bigr|_{\cX_{12}}- s_2 \bigr|_{\cX_{12}}$.  Let $u\in M$ be the dual of the $\bZ$-basis of $v_1\in N$. Then
\begin{eqnarray*}
\ch^T \bigl(\Gamma(\cX_1,\cL_{c_1,c_2}) \bigr) &=& \sum_{m\in \bZ, s_1 m\geq -c_1  } e^{mu}\\
\ch^T \bigl(\Gamma(\cX_2,\cL_{c_1,c_2}) \bigr) &=& \sum_{m\in \bZ, -s_2 m \geq -c_2}e^{mu}\\
\ch^T \bigl(\Gamma(\cX_{12},\cL_{c_1,c_2}) \bigr) &=& \sum_{m\in \bZ} e^{mu}.
\end{eqnarray*}
Therefore, 
\begin{eqnarray*}
ch^T(H^0(\cX,\cL_{c_1,c_2})) &=&\begin{cases}
\displaystyle{ \sum_{ m\in \bZ, -\frac{c_1}{s_1}\leq m \leq \frac{c_2}{s_2} } e^{mu} } , & \frac{c_1}{s_1}+\frac{c_2}{s_2} \geq 0,\\
\quad \quad 0, & \frac{c_1}{s_1}+ \frac{c_2}{s_2}<0,
\end{cases} \\
ch^T(H^1(\cX,\cL_{c_1,c_2})) &=&\begin{cases}
\quad \quad 0, & \frac{c_1}{s_1}+\frac{c_2}{s_2} \geq 0, \\
\displaystyle{ \sum_{ m\in \bZ,  \frac{c_2}{s_2} <  m < -\frac{c_1}{s_1} } e^{mu} }, & \frac{c_1}{s_1}+\frac{c_2}{s_2} < 0. 
\end{cases} 
\end{eqnarray*}

More generally, suppose that a torus $T'$ (of any dimension)  acts on 
the total space of $\cL= \cL_{c_1,c_2}$, such that
$$
c_1^{T'}(T_{\fp_1}\cC_{s_1,s_2}) =\frac{-w_1}{s_1},\quad
c_1^{T'}(T_{\fp_2}\cC_{s_1,s_2}) = \frac{w_1}{s_2},\quad
c_1^{T'}(\cL_{\fp_1}) = w_2,\quad c_1^{T'}(\cL_{\fp_2})=w_3,
$$
where $w_1, w_2, w_3\in H^2(BT';\bQ)$. Then
$$
w_3 = w_2 + a w_1,
$$
where
$$
a=\frac{c_1}{s_1} + \frac{c_2}{s_2} \in \frac{g.c.d.(s_1,s_2)}{s_1s_2} \bZ. 
$$
Let
$$
\ep =\langle \frac{c_2}{s_2}\rangle \in \{0,\frac{1}{s_2},\ldots, \frac{s_2-1}{s_2} \}. 
$$
Then 
\begin{eqnarray*}
ch^T(H^0(\cX,\cL)) &=&\begin{cases}
\displaystyle{ \sum_{m\in \bZ, -\ep\leq m\leq a-\ep }e^{w_3 -( m+\ep) w_1} =\sum_{m=0}^{\lfloor a-\ep\rfloor}e^{w_3-(m+\ep)w_1} } , & a \geq 0,\\
\quad \quad 0, & a<0,
\end{cases} \\
ch^T(H^1(\cX,\cL)) &=&\begin{cases}
\quad \quad 0, & a \geq 0, \\
\displaystyle{ \sum_{m\in \bZ, \ep < m < \ep-a}e^{w_3+ (m -\ep) w_1} =\sum_{m=1}^{\lceil \ep-a-1\rceil} e^{w_3+(m-\ep)+w_1} }, & a < 0. 
\end{cases} 
\end{eqnarray*}

\end{example}

\subsection{Toric graph}\label{sec:toric-graph-orb}
The coarse moduli space of the toric DM stack $\cX=\cX_\bSi$
defined by a stacky fan $\bSi=(N,\Si,\beta)$ is the simplicial
toric variety $X=X_\Si$ defined by the simplicial fan $\Si\subset N_\bR$.
The definitions of the 1-skeleton $X^1$ and the flags
in $\Si$ in Section \ref{sec:toric-graph} for smooth toric varieties also work
for simplicial toric varieties.  The sets $\Si(r)$, $\Si(r-1)$ and 
$F(\Si)$ define a connected graph $\Up$.  
Let $T=(\bC^*)^r$ be the torus acting on the coarse moduli $X$, and let $\cT$ be 
the DM torus acting on $\cX$. Then $\pi:\cX\to X$ restricts to 
$\cT\to T$, and $\cT=T$ if and only if $\cX$ is a toric orbifold.

Given $\si\in\Si(r)$  let $p_\si=V(\si)$ (resp. $\fp_\si=\bfV(\si)$) be the associated
zero dimensional $T$-orbit (resp. $\cT$-orbit) in $X$ (resp. $\cX$). Then
$\fp_\si= [p_\si/G_\si]=\cB G_\si$.
Given $\tau\in \Si(r-1)$, let $\ell_\tau=V(\tau)$ (resp. $\fl_\tau=\bfV(\tau)$) 
be the associated one dimensional $T$-orbit closure (resp. $\cT$-orbit closure)
in $X$ (resp. $\cX$). Then $\fl_\tau$ is a 1-dimensional toric DM stack, 
and $\fl_\tau\to \fl_\tau^\rig$ is a $G_\tau$-gerbe. Define a map
$r:F(\Si)\to \bZ_{>0}$ by 
$$
r(\tau,\si)=\frac{|G_\si|}{|G_\tau|}. 
$$
There there is a short exact sequence of abelian groups
$$
1\to G_\tau \lra G_\si \stackrel{\phi(\tau,\si)}{\lra} \mu_{r(\tau,\si)}\to 1,
$$
where $\phi(\tau,\si): G_\si\to \bC^*$ is the character of 
the irreducible $G_\si$-representation $T_{\fp_\si}\fl_\tau$.

Given $\tau\in \Si(r-1)$, there are two cases:
\begin{enumerate}
\item Suppose that $\tau\in \Si(r-1)_c$. Then
$\tau$ is the intersection of two $r$-dimensional cones $\si$, $\si'$.
We have $\ell_\tau\cong\bP^1$ and $\fl_\tau^\rig\cong \cC_{r(\tau,\si), r(\tau,\si')}$.
\item Suppose that $\tau\notin \Si(r-1)_c$. Then
there is a unique $r$-dimensional cone $\si$ which contains $\tau$. We have
$\ell_\tau\cong \bC$ and $\fl_\tau^\rig \cong [\bC/\mu_{r(\tau,\si)}]$.
\end{enumerate}

Given $(\tau,\si)\in F(\Ga)$, let $\bw(\tau,\si) \in M_\si$ be characterized by
$$
\langle \bw(\tau,\si), b_i\rangle =
\begin{cases} 
0 & \textup{if } \rho_i\subset \tau, \\
1 & \textup{if }\rho_i\subset \si \textup{ and } \rho_i\not \subset \tau.
\end{cases}
$$
This gives rise to a map $\bw:F(\Si)\to M_\bQ$
satisfying the following properties.
\begin{enumerate}
\item $\bw(\tau,\si)$ is the weight of $T$-action 
on $T_{\fp_\si} \fl_\tau$, the tangent
line to $\fl_\tau$ at $\fp_\si$. In other words,
$$
\bw(\tau,\si) = c_1^T(T_{\fp_\si}\fl_\tau) = H^2_T(\fp_\si)=M_\bQ.
$$ 
\item Given any $\si\in \Si(r)$, the
set $\{\bw(\tau,\si)\mid \tau\in E_\si \}$ form
a $\bZ$-basis of $M_\si$.
These are the weights of the $T$-action on the tangent space 
$T_{\fp_\si}\cX$ to $\cX$ at 
the torus fixed (stacky) point $\fp_\si$.
\item Any $\tau\in \Si(r-1)_c$ is contained in two top dimensional
cones $\si,\si'\in \Si(r)$. 
\begin{enumerate}
\item $r(\tau,\si) \bw(\tau,\si) = -  r(\tau,\si') \bw(\tau,\si')\in M$.
\item $\fl_\tau^\rig \cong\cC_{r(\tau,\si), r(\tau,\si')}$. 
\end{enumerate}

\end{enumerate}
Let $\tau$ be as in (2). The normal
bundle of $\fl_\tau$ in $\cX$ is given by
$$
N_{\fl_\tau/\cX}\cong \cL_1\oplus\cdots \oplus  \cL_{r-1}
$$
where $\cL_i$ is a $\cT$-equivariant line bundle
over $\fl_\tau$ such that the weights of the
$T$-actions on the fibers $(\cL_i)_{\fp_\si}$ and $(\cL_i)_{\fp_{\si'} }$
are $\bw(\tau_i,\si) \in M_\si$ and $\bw(\tau_i',\si')\in M_{\si'}$, respectively.  
We have
$$
\bw(\tau'_i,\si')= \bw(\tau_i,\si) -a_i r(\tau,\si) \bw(\tau,\si) 
=\bw(\tau_i,\si_i)+ a_i r(\tau,\si') \bw(\tau,\si')
$$
where 
$$
a_i =\int_{\fl_\tau} c_1(\cL_i) \in \bQ.
$$

\subsection{Cohomology and equivariant cohomology}
In this section, we recall the result of \cite{BCS} on the
Chow ring of toric Deligne-Mumford stacks.
We also state the equivariant version.

Let $\cX=\cX_\bSi$ be the toric DM stack defined by a stacky
fan $\bSi=(N,\Si,\beta)$, and let $X=X_\Si$ be the simplicial
toric variety defined by the simplicial fan $\Si$. We assume that
$X$ is projective.

\begin{definition}
\begin{enumerate}
\item Let $I$ be the ideal in $\bQ[X_1,\ldots,X_s]$ generated by 
the monomials 
$\{ X_{i_1}\cdots X_{i_k}\mid v_{i_1},\ldots,v_{i_k}\textup{ do not generate a cone in $\Si$}\}$.
\item Let $J$ be the ideal in $\bQ[X_1,\ldots,X_s]$ generated by
$\{ \sum_{\alpha=1}^s \langle u,\bar{b}_\alpha\rangle X_\alpha \mid u\in M\}$.
\item Let $I'$ be the ideal in $R_T[X_1,\ldots,X_s]= \bQ[X_1,\ldots,X_s, u_1,\ldots,u_r]$
generated by the monomials
$\{ X_{i_1}\cdots X_{i_k}\mid v_{i_1},\ldots,v_{i_k}\textup{ do not generate a cone in $\Si$}\}$.
\item Let $J'$ be the ideal in $R_T[X_1,\ldots,X_s]= \bQ[X_1,\ldots,X_s, u_1,\ldots,u_r]$
generated by $\{ \sum_{\alpha=1}^s \langle u,\bar{b}_\alpha\rangle X_\alpha-u \mid u\in M\}$.
\item $\deg(X_\alpha)=2$, $\alpha=1,\ldots, s$; $\deg(u_i)=2$, $i=1,\ldots,r$.
\end{enumerate}
\end{definition}

With all the above definitions, the cohomology and equivariant
cohomology rings of $\cX$ can be describe explicitly as follows. 
\begin{theorem} We have the following isomorphisms of graded rings:
$$
\begin{aligned}
& H^*(\cX)\cong\bQ[X_1,\ldots,X_s]/ (I+J).\\
& H^*_T(\cX)= H^*_\cT(\cX)\cong  \bQ[X_1,\ldots,X_s,u_1,\ldots,u_r]/ (I'+J')\cong
\bQ[X_1,\ldots,X_s]/I.
\end{aligned}
$$
The isomorphism is given by $X_\alpha\mapsto c_1(\cO_\cX(\cD_\alpha))$ or 
$c_1^T(\cO_\cX(\cD_\alpha))$.
\end{theorem}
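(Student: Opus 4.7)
My plan is to exploit the presentation $\cX=[U/G_\bSi]$ with $U=\bC^s-Z(I_\Si)$, together with the identification (established in Section \ref{sec:lift}) that $H^*_\cT(\cX)=H^*_\tT(U)$. The argument runs in parallel with the smooth toric variety case, with $U$ serving as a ``stacky Cox'' total space and $\tT$ playing the role of the big torus.

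The first step is to compute $H^*_\tT(U)$. Since $\bC^s$ is $\tT$-equivariantly contractible, $H^*_\tT(\bC^s)=H^*(B\tT)=\bQ[X_1,\ldots,X_s]$, where $X_\alpha$ is the $\tT$-equivariant first Chern class of the coordinate hyperplane $\tilde D_\alpha=\{z_\alpha=0\}$. The complement $Z(I_\Si)\subset\bC^s$ decomposes as a union of $\tT$-invariant coordinate subspaces of the form $\bigcap_{\rho_\alpha\not\subset\si}\tilde D_\alpha$ for $\si\notin\Si$. Removing these subspaces inductively (in order of decreasing codimension, so that each normal bundle is a direct sum of coordinate lines with split equivariant Euler class) and tracking the equivariant Gysin long exact sequence shows that each removal contributes exactly the monomial relation $\prod_{\rho_\alpha\not\subset\si}X_\alpha=0$, and that these are the only relations. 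This gives the Stanley-Reisner presentation $H^*_\tT(U)=\bQ[X_1,\ldots,X_s]/I'$. To match generators: the pullback of $\cO_\cX(\cD_\alpha)$ to $U$ is $\cO_U(\tilde D_\alpha|_U)$, so $c_1^T(\cO_\cX(\cD_\alpha))=X_\alpha$ under $H^*_T(\cX)=H^*_\cT(\cX)=H^*_\tT(U)$.

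The second step is to derive the relations $J'$ from the $T$-equivariant structure. The $T$-action on $\cX$ factors through $\cT$, and the dual $\bar\beta^*\colon M\hookrightarrow\tM$ of the character-lattice surjection induced by $\tT\twoheadrightarrow T=\tT/G_{\bSi^\rig}$ sends $u\mapsto\sum_\alpha\langle u,\bar b_\alpha\rangle\tb_\alpha^*$. Under the identification above, the generator $u_i\in H^2(BT)$ therefore pulls back to $\sum_\alpha\langle u_i,\bar b_\alpha\rangle X_\alpha$ in $H^*_T(\cX)$. Introducing the $u_i$ as extra variables and recording these as the ideal $J'$ gives the presentation $\bQ[X_1,\ldots,X_s,u_1,\ldots,u_r]/(I'+J')$; eliminating the $u_i$ via $J'$ recovers $\bQ[X_1,\ldots,X_s]/I$.

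For the non-equivariant statement I would first verify equivariant formality, i.e.\ that $H^*_T(\cX)$ is free over $H^*(BT)$. This follows from the existence of a $T$-invariant affine paving of $\cX$ (obtained via the Bia{\l}ynicki-Birula decomposition associated to a generic one-parameter subgroup of $T$), which forces $H^{\mathrm{odd}}(\cX)=0$; by degree reasons the Serre spectral sequence for $\cX\hookrightarrow\cX_T\to BT$ then collapses at $E_2$, giving $H^*_T(\cX)\cong H^*(\cX)\otimes H^*(BT)$ as $H^*(BT)$-modules. Consequently $H^*(\cX)=H^*_T(\cX)/\langle u_1,\ldots,u_r\rangle$, and setting $u_i=0$ reduces $J'$ to $J$, producing the first isomorphism. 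The main technical obstacle is making the inductive excision argument in step one fully rigorous, since $Z(I_\Si)$ is a union of coordinate subspaces of varying codimensions whose intersection pattern is encoded by the combinatorics of $\Si$; I would either carry out this induction directly or bypass it by invoking the Borisov-Chen-Smith computation of $A^*(\cX)$ together with its equivariant refinement due to Iwanari \cite{Iw1,Iw2} and Fantechi-Mann-Nironi \cite{FMN}, and noting that the cycle class map $A^*(\cX)\to H^*(\cX)$ is an isomorphism thanks to the affine paving.
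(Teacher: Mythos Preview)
The paper does not actually prove this theorem. It appears in Section~\ref{sec:toric-stacks} under the heading ``Cohomology and equivariant cohomology,'' immediately after the sentence ``In this section, we recall the result of \cite{BCS} on the Chow ring of toric Deligne-Mumford stacks. We also state the equivariant version,'' and is stated without proof as a known result. Your fallback of citing \cite{BCS}, \cite{Iw1,Iw2}, and \cite{FMN} is therefore exactly what the paper itself does, so in that sense your proposal already subsumes the paper's treatment.

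That said, the argument you sketch before the fallback is the standard one and is essentially correct. Two small remarks. First, a notational slip: $I'$ is by definition an ideal of $\bQ[X_1,\ldots,X_s,u_1,\ldots,u_r]$, so when you write $H^*_{\tT}(U)=\bQ[X_1,\ldots,X_s]/I'$ you mean $/I$ (same monomial generators, smaller ambient ring). Second, for equivariant formality it is cleaner to avoid Bia{\l}ynicki--Birula on the stack and instead observe directly that the orbit stratification $\{\bfO_\si=[O_{\tsi}/G_{\bSi}]:\si\in\Si\}$ already gives a $T$-invariant paving of $\cX$ by stacks of the form (affine space)$\times\cB(\textup{finite abelian group})$; each stratum has rational cohomology concentrated in degree zero, which forces $H^{\mathrm{odd}}(\cX)=0$ and hence the collapse of the Serre spectral sequence. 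The inductive excision computation of $H^*_{\tT}(U)$ you outline is the standard calculation of the equivariant cohomology of a coordinate-subspace-arrangement complement and can indeed be made rigorous by stratifying $Z(I_\Si)$ by codimension.
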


The ring $\bQ[X_1,\ldots,X_s]/I$ is known as the Stanley-Reisner ring.
The ring homomorphism 
$$
i_\cX^*:H^*_T(\cX)=\bQ[X_1,\ldots,X_s, u_1,\ldots,u_r]/(I'+J')\to 
H^*(\cX)=\bQ[X_1,\ldots,X_s]/(I+J)
$$
is surjective. The kernel is the ideal generated by
$u_1,\ldots, u_r$.  We say $\gamma^T\in H_T^*(\cX)$ is 
a $T$-equivariant lift of $\gamma\in H^*(X)$ if
$i_\cX^*(\gamma^T)=\gamma$. 

\begin{example}
Let $\cC_{s_1,s_2}$ be defined as in Example \ref{ex:Css}.
Then 
\begin{eqnarray*}
H^*(\cC_{s_1,s_2}) &\cong& \bQ[X_1, X_2]/\langle s_1 X_1-s_2 X_2, X_1X_2 \rangle \cong \bQ[X_1]/\langle X_1^2 \rangle,\\
H^*_T(\cC_{s_1,s_2})&\cong& \bQ[X_1,X_2]/\langle X_1 X_2\rangle. 
\end{eqnarray*}
\end{example}

\subsection{Orbifold cohomology and equivariant cohomology}
In this section, we recall the results of \cite{BCS} on orbifold
Chow ring. We also state the equivariant version.

For any $\si\in \Si$, define
$$
\mathrm{Box}(\si)=\{ v\in N\mid \bar{v}=\sum_{\rho_i\subset \si} q_i \bar{b}_i,\ 0\leq q_i<1\}
$$
Then there is an bijection between $\mathrm{Box}(\si)$ and
$N(\si)=N/N_\si$.  Define
$$
\mathrm{Box}(\bSi)=\bigcup_{\si\in \Si} \mathrm{Box}(\si).
$$
The inertia stack of $\cX =\cX_{\bSi}$ is 
$$
\cIX =\coprod_{v\in \mathrm{Box}(\bSi)} \cX(\bSi/\si(\bar{v}))
$$
where $\si(\bar{v})$ is the minimal cone in $\bSi$ containing $\bar{v}$.

Given $v =\sum_\alpha q_\alpha \bar{v}_{\alpha=1}^r \in \mathrm{Box}(\bSi)$, define
$$
X^v := \prod_{\alpha=1}^s X_\alpha^{q_\alpha}.
$$
As a $\bQ$-vector spaces, 
$$
H^*_\orb(\cX) = H^*(\cX(\bSi/\si(\bar{v})) [\deg(X^v)].
$$
Let $R_T= \bQ[u_1,\ldots, u_r]$. As an $R_T$-module, 
$$
H^*_{\orb,T}(\cX) = H^*_{\orb,\cT}(\cX) 
=\bigoplus_{v\in \mathrm{Box}(\bSi)} H^*_T(\cX(\bSi/\si(\bar{v})) [\deg(X^v)].
$$

\begin{example}\label{ex:Css-H}
Let $\bSi_{s_1,s_2}$, $\cC_{s_1,s_2}$, and $\{ \cC_{s_1,s_2,v} \}_{v=1-s_2}^{s_1-1}$ 
be defined as in Example \ref{ex:Css}. Then
$$
N=\bar{N}=\bZ,\quad \mathrm{Box}(\bSi_{s_1,s_2})=\{ v\in \bZ\mid 1-s_2 \leq s_1-1\}.
$$
$$
\cX(\bSi/\si(\bar{v})) = \cC_{s_1,s_2,v},\quad  1-s_2 \leq v \leq  s_1-1.
$$
As a $\bQ$-vector space, 
\begin{eqnarray*}
H^*_\orb(\cC_{s_1,s_2}) &=& 
H^*(\cC_{s_1,s_2}) \oplus \bigoplus_{i=1}^{s_1-1} H^*(\cC_{s_1,s_2,i})[\frac{2i}{s_1}] \oplus
\bigoplus_{j=1}^{s_2-1} H^*(\cC_{s_1,s_2,-j})[\frac{2j}{s_2}] \\
&=&\bQ 1 \oplus \bQ H \oplus \bigoplus_{i=1}^{s_1-1} \bQ 1_{\frac{i}{s_1}} \oplus \bigoplus_{j=1}^{s_2-1} \bQ 1'_{\frac{j}{s_2}},
\end{eqnarray*}
where $1_r, 1'_r\in H^{2r}_\orb(\cC_{s_1,s_2})$. 
\end{example}

We next describe the ring structure.  
\begin{definition}
\begin{enumerate}
\item As a $\bQ$-vector space, $\bQ[N]^{\bSi}= \oplus_{c\in N}\bQ y^c$.
\item As a $R_T$-module, $R_T[N]^{\bSi} =\oplus_{c\in N}R_T y^c $.
\item Define the multiplication on $\bQ[N]^{\bSi}$ and $R_T[N]^{\bSi}$ by  
$$
y^{c_1}\cdot y^{c_1} =
\begin{cases}
y^{c_1+c_2}, & \textup{if there is $\si\in \Si$ such that $\bar{c}_1\in \si$ and $\bar{c}_2 \in \si$},\\
0, & \textup{otherwise}.
\end{cases}
$$
\item Given $c\in N$, let $\si$ be the minimal cone in $\Si$ containing $\bar{c}\in \bar{N}$. Then
$\bar{c}=\sum_{\bar{b}_\alpha \in \si} m_\alpha \bar{b}_\alpha$ for some $m_\alpha\in \bQ_{\geq 0}$. Define
$$
\deg(y^c):= 2 \sum_{\bar{b}_\alpha \in \si} m_\alpha.
$$
\item Let $J$ be the ideal of $\bQ[N]^{\bSi}$ generated by 
$\{ \sum_{\alpha =1}^s \langle u_i,\bar{b}_\alpha\rangle y^{b_\alpha}\mid i=1,\ldots,r\}$.
\item Let $J'$ be the ideal of $R_T[N]^{\bSi}$ generated
by $\{ \sum_{\alpha=1}^s \langle u_i, \bar{b}_\alpha\rangle y^{b_\alpha} -u_i \mid i=1,\ldots,r \}$.  
\end{enumerate}
\end{definition}

\begin{theorem}
\begin{enumerate}
\item There is an isomorphisms of $\bQ$-graded rings:
$$
H^*_\orb(\cX) \cong \bQ[N]^{\bSi}/J.
$$
\item There is an isomorphism of $\bQ$-graded $R_T$-modules:
$$
H^*_{\orb,T}(\cX) = H^*_{\orb,\cT}(\cX) \cong R_T[N]^{\bSi}/J'.
$$
\end{enumerate}
\end{theorem}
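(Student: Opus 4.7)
The plan is to follow Borisov--Chen--Smith for part (1) and to obtain part (2) by carrying equivariant parameters through the same argument at the level of graded $R_T$-modules.

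First I would identify both sides as graded $\bQ$-vector spaces. Every $c\in N$ has a unique expression $c = v + \sum_{\bar b_\alpha\in\si(\bar v)} m_\alpha b_\alpha$ with $v\in \mathrm{Box}(\bSi)$ and $m_\alpha\in\bZ_{\geq 0}$, so $\bQ[N]^{\bSi}$ decomposes as $\bigoplus_{v\in \mathrm{Box}(\bSi)} \bQ[N]^{\bSi}_v$, where the $v$-summand is spanned by $\{y^c : c - v \in N_{\si(\bar v)}^+\}$. On the left, the inertia-stack decomposition gives $H^*_\orb(\cX) = \bigoplus_v H^*(\cX(\bSi/\si(\bar v)))[\deg(X^v)]$, and each twisted sector is a toric DM stack whose cohomology is presented via the previous theorem. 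I would match the two sides by sending $y^{v+\sum m_\alpha b_\alpha}$ to the product $X^v\cdot\prod X_\alpha^{m_\alpha}$ in the $v$-th sector. The grading shift $\deg(X^v)=2\age$ holds because when $\bar v=\sum q_i \bar b_i$ the stabilizer acts on the normal coordinates with eigenvalues $e^{2\pi\sqrt{-1}q_i}$, giving $\age=\sum q_i$.

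Second I would verify that the combinatorial multiplication $y^{c_1}\cdot y^{c_2}$ realizes the Chen--Ruan orbifold product. The vanishing when $\bar c_1,\bar c_2$ do not lie in a common cone is immediate from the fact that the corresponding fiber product of twisted sectors over $\cX$ is empty. For the nontrivial case $\bar c_1,\bar c_2\in\si$, the degree-zero 3-pointed moduli of twisted stable maps is itself a toric DM substack of $\cX$, and the Chen--Ruan obstruction bundle decomposes as a sum of $\cT$-equivariant line bundles indexed by the rays $\rho_i\subset\si$; its Euler class integral produces exactly the shift from $\age(\bar c_1)+\age(\bar c_2)$ to $\age(\overline{c_1+c_2})$, matching the identity $y^{c_1}\cdot y^{c_2}=y^{c_1+c_2}$. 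The relations in $J$ correspond to the vanishing of principal divisors (characters of $T$) in each sector.

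For part (2), the same decomposition works $R_T$-linearly: by the equivariant part of the earlier theorem, each $H^*_T(\cX(\bSi/\si(\bar v)))$ is presented as $R_T[X_\alpha]/I'$ restricted to indices $\rho_\alpha\subset$ star of $\si(\bar v)$, and the relations in $J'$ encode both the per-sector $J'$-analog (setting $u = \sum_\alpha \langle u,\bar b_\alpha\rangle X_\alpha$ via $c_1^T(\cO_\cX(\cD_\alpha))$) and the cross-sector identifications of the parameters $u_i$. Setting $u_i=0$ recovers part (1).

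The main obstacle is the ring-structure computation in step two: identifying the Chen--Ruan obstruction contribution with combinatorial box addition. The vector-space matching and the vanishing cases are formal, but showing that the nontrivial structure constants equal $1$ (rather than some nontrivial polynomial in $X_\alpha$'s) requires explicit analysis of the 3-pointed degree-zero twisted stable map moduli and careful use of the toric structure of the obstruction bundle. Everything else reduces to bookkeeping with stacky fans.
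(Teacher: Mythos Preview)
The paper does not actually prove this theorem. It is an expository article, and this result is explicitly introduced with ``In this section, we recall the results of \cite{BCS} on orbifold Chow ring. We also state the equivariant version.'' The theorem is then stated and followed immediately by an example, with no proof given. Part (1) is the main theorem of Borisov--Chen--Smith, and part (2) is its equivariant analogue, stated without argument.

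Your proposal is a reasonable sketch of the Borisov--Chen--Smith proof of part (1), and your plan for part (2) --- carrying the equivariant parameters through the sector-by-sector presentation using the earlier equivariant cohomology theorem --- is the natural route. Since the paper offers no proof to compare against, there is nothing further to check beyond whether your outline is internally sound. It is, with the caveat you already identified: the nontrivial content is the obstruction-bundle computation showing the Chen--Ruan product agrees with the combinatorial rule $y^{c_1}\cdot y^{c_2}=y^{c_1+c_2}$ when $\bar c_1,\bar c_2$ share a cone. That is genuinely the heart of \cite{BCS}, and your sketch correctly locates it but does not carry it out. For part (2), note that the statement only claims an isomorphism of graded $R_T$-modules, not of rings, so you need less than your outline suggests: the sector-by-sector additive decomposition together with the equivariant presentation of each sector suffices.
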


\begin{example}
Let $\cC_{s_1,s_2}$ be defined as in Example \ref{ex:Css}. Then
\begin{eqnarray*}
H^*_\orb(\cC_{s_1,s_2}) &\cong & \bQ[y_1, y_2]/\langle y_1 y_2, s_1 y_1^{s_1} -s_2 y_2^{s_1} \rangle. 
\end{eqnarray*}
As a $\bQ$-graded $\bQ$-vector space,
\begin{eqnarray*}
H^*_\orb(\cC_{s_1,s_2}) &\cong & \bQ 1 \oplus \bQ H \bigoplus_{i=1}^{s_1-1} \bQ y_1^i \oplus \bigoplus_{i=1}^{s_2-1} \bQ y_2^j,
\end{eqnarray*}
where 
$$
H= s_1 y_1^{s_1} = s_2 y_2^{s_2},\quad  \deg(y_1)=\frac{2}{s_1},
\quad \deg(y_2)=\frac{2}{s_2}. 
$$
Let $1_r$ and $1'_r$ be defined as in Example \ref{ex:Css-H}. Then
$$
y_1^i = 1_{\frac{i}{s_1}} ,\quad y_2^j = 1'_{\frac{j}{s_2}}.
$$
\begin{eqnarray*}
H^*_{\orb,T}(\cC_{s_1,s_2}) &\cong & \bQ[y_1, y_2,u]/\langle y_1 y_2, s_1 y_1^{s_1} -s_2 y_2^{s_1} -u \rangle 
\end{eqnarray*}

\end{example}

%
%

\section{Orbifold Gromov-Witten Invariants of Smooth Toric DM Stacks}
\label{sec:toricGW-orb}

The main reference of this section is P. Johnson's thesis \cite{Jo},
which contains detailed localization computations for one-dimensional toric DM stacks.

Let $\cX$ be a toric DM stack of dimension $r$ defined by 
a stacky fan $\bSi =(N,\Si,\beta)$, and let $s= |\Si(1)|\geq r$.
Let 
$$
\cI \cX = \bigsqcup_{i\in I} \cX_i
$$ 
be the inertia stack of $\cX$, and let $\vi=(i_1,\ldots,i_n)\in I^n$.
The torus $T$ acts on $\cX$, and acts on the moduli stack $\MgXi$ by
$$
t \cdot [f:(\cC, \fx_1,\ldots,\fx_n)\to \cX]\mapsto [t\cdot f:(\cC,\fx_1,\ldots,\fx_n)\to \cX]
$$
where $(t\cdot f)(z)= t\cdot f(z)$, $z\in \cC$. The evaluation maps
$\ev_j:\MgXi\to \cX_{i_j}$ are $T$-equivariant and induce
$\ev_j^*:A^*_T(\cX_{i_j})\to A^*_T(\MgXi)$.

\subsection{Equivariant orbifold Gromov-Witten invariants}
Suppose that $\MgXi$ is proper, so that there are virtual fundamental classes
\begin{eqnarray*}
&& [\MgXi]^\vir \in A_{d^\vir_\vi}(\MgXi)\\
&& [\MgXi]^{\vir,T} \in A_{d^\vir_\vi }^T(\MgXi),
\end{eqnarray*}
where $\vi=(i_1,\ldots,i_n)\in I^n$, and 
$$
d^\vir_\vi= \int_\beta c_1(T\cX)+(r-3)(1-g)+n -\sum_{j=1}^n\age(\cX_{i_j}).
$$
Recall that the weighted virtual fundamental class is given by 
$$
[\MgXi]^w=\Bigl(\prod_{j=1}^n r_{i_j}\Bigr) [\MgXi]^\vir.
$$
Similarly,
$$
[\MgXi]^{w,T} =\Bigl(\prod_{j=1}^n r_{i_j}\Bigr)[\MgXi]^{\vir,T}
$$

Given $\gamma_j\in A^{d_j}(\cX_{i_j})=H^{2d_i}(\cX_{i_j})= H^{2(d_j+\age(\cX_{i_j}))}_\orb(\cX)$ and $a_j\in \bZ_{\geq 0}$, define
$\langle \bar{\tau}_{a_1}(\gamma_1)\cdots \bar{\tau}_{a_n}(\gamma_n)\rangle^\cX_{g,\beta}$ 
as in Section \ref{sec:orbGW}:
\begin{equation}\label{eqn:nonequivariant-orbGW}
\langle \bar{\tau}_{a_1}(\gamma_1)\cdots \bar{\tau}_{a_n}(\gamma_n)\rangle^\cX_{g,\beta}
=\int_{[\MgXi]^w}\prod_{j=1}^n \left(\ev_j^*\gamma_j\cup \bar{\psi}_j^{a_j}\right) \in \bQ.
\end{equation}
By definition, \eqref{eqn:nonequivariant-orbGW} is zero unless
$$
\sum_{j=1}^n d_j = d_\vi^\vir
$$
or equivalently,
$$
\sum_{j=1}^n ( d_j + \age(\cX_{i_j}) ) =\int_\beta c_1(T\cX)+ (r-3)(1-g)+n.
$$

In this case,
\begin{equation}\label{eqn:alphaT-orb}
\langle \bar{\tau}_{a_1}(\gamma_1)\cdots \bar{\tau}_{a_n}(\gamma_n)\rangle^\cX_{g,\beta}
=\int_{[\MgXi]^{w,T}}\prod_{j=1}^n \left(\ev_j^*\gamma_j^T\cup (\bar{\psi}_j^T)^{a_j}\right)
\end{equation}
where $\gamma_j^T\in A^{d_j}_T(X)$ is any $T$-equivariant lift of
$\gamma_j\in A^{d_j}(X)$,
and 
$$
\bar{\psi}_j^T\in A^1_T(\MgXi)
$$ 
is any $T$-equivariant lift of $\bar{\psi}_j\in A^1(\MgXi)$.

Given $\gamma_j^T\in A_T^{d_j}(\cX_{i_j})$, 
we define $T$-equivariant orbifold Gromov-Witten invariants
\begin{equation}\label{eqn:equivariant-orbGW}
\begin{aligned}
\langle\bar{\tau}_{a_1}(\gamma_1^T),\cdots,\bar{\tau}_{a_n}(\gamma_n^T)\rangle_{g,\beta}^{\cX_T}
& :=\int_{[\MgXi]^{w,T}} \prod_{j=1}^n \left(\ev_i^*\gamma_i^T (\bar{\psi}_i^T)^{a_i}\right)\\
&\in \bQ[u_1,\ldots,u_l](\sum_{j=1}^n d_j-d^\vir_\vi).
\end{aligned}
\end{equation}
where $\bQ[u_1,\ldots,u_l](k)$ is the space of degree $k$ homogeneous polynomials in 
$u_1,\ldots,u_l$ with rational coefficients. In particular, 
$$
\langle\bar{\tau}_{a_1}(\gamma_1^T),\cdots,\bar{\tau}_{a_n}(\gamma_n^T)\rangle_{g,\beta}^{\cX_T}
=\begin{cases}
0, & \sum_{i=1}^n d_i <d^\vir_\vi,\\
\langle\bar{\tau}_{a_1}(\gamma_1),\cdots,\bar{\tau}_{a_n}(\gamma_n)\rangle_{g,\beta}^\cX\in \bQ, &
\sum_{j=1}^n d_j = d^\vir_\vi.
\end{cases}
$$
where $\gamma_j=i_{\cX_{i_j}}^*\gamma_j^T\in A^{d_j}(\cX_{i_j})$.

In this section, we will compute the 
$T$-equivariant orbifold Gromov-Witten invariants \eqref{eqn:equivariant-orbGW}
by localization. 
Let $\MgXi^T\subset \MgXi$ be the substack of $T$ fixed points, 
and let $i:\MgXi^T\to \MgXi$ be the inclusion.
Let $N^\vir$ be the virtual normal bundle of substack 
$\MgXi^T$ in $\MgXi$; in general, $N^\vir$ has different
ranks on different connected components of $\MgXi^T$.  
By virtual localization,
\begin{equation}
\begin{aligned}
& \int_{[\MgXi]^{w,T}} \prod_{j=1}^n\left(\ev_j^*\gamma_j^T\cup (\bar{\psi}_j^T)^{a_j}\right) \\
= &\int_{[\MgXi^T]^{w,T}}\frac{i^*\prod_{j=1}^n\left(\ev_j^*\gamma_j^T\cup (\bar{\psi}_j^T)^{a_j}\right)}
{e^T(N^\vir)}.
\end{aligned}
\end{equation}

Indeed, we will see that $\MgXi^T$ is proper even when $\MgXi$ is not. When $\MgXi$ is not proper, we
{\em define} 
\begin{equation}\label{eqn:residue-orb}
\begin{aligned}
\langle\bar{\tau}_{a_1}(\gamma_1^T),\ldots,\bar{\tau}_{a_n}(\gamma_n^T)\rangle^\cX_{g,\beta}
= &\int_{[\MgXi^T]^{w,T}}
\frac{i^*\prod_{j=1}^n\left(\ev_j^*\gamma_j^T\cup (\bar{\psi}_j^T)^{a_j}\right)}{e^T(N^\vir)}\\
& \in  \bQ(u_1,\ldots,u_r).
\end{aligned}
\end{equation}
When $\MgXi$ is not proper, the right hand side of \eqref{eqn:residue-orb} is a rational function
(instead of a polynomial) in $u_1,\ldots,u_r$. It can be nonzero when
$\sum_{j=1}^n d_j< d^\vir_\vi$, and does not have a nonequivariant limit (obtained by setting $u_i=0$) 
in general.

\subsection{Torus fixed points and graph notation}\label{sec:graph-notation-orb}
In this subsection, we describe the $T$-fixed points in 
$\MgXi$. Given
a twisted stable map $f:(\cC,\fx_1,\ldots,\fx_n)\to \cX$ such that
$$
[f:(\cC,\fx_1,\ldots,\fx_n)\to \cX] \in \MgXi^T,
$$
we will associate a decorated graph  $\vGa$.  We first give a formal definition.
\begin{definition}\label{df:GgX-orb}
A decorated graph $\vGa=(\Ga, \vf, \vd, \vg, \vs, \vk)$
for $n$-pointed, genus $g$, degree $\beta$ 
stable maps to $\cX$  consists of the following data.

\begin{enumerate}
\item $\Ga$ is a compact, connected 1 dimensional CW complex. 
We denote the set of vertices (resp. edges) in $\Ga$ 
by $V(\Ga)$ (resp. $E(\Ga)$). The set of flags of $\Gamma$ is
defined to be
$$
F(\Ga)=\{(e,v)\in E(\Ga)\times V(\Ga)\mid v\in e\}.
$$

\item The {\em label map} $\vf: V(\Ga)\cup E(\Ga)\to \Si(r)\cup \Si(r-1)_c$
sends a vertex $v\in V(\Ga)$ to 
a top dimensional cone $\si_v \in \Si(r)$, and 
sends an edge $e\in E(\Ga)$ to
an $(r-1)$-dimensional cone $\tau_e \in \Si(r-1)_c$. 
Moreover, $\vf$ defines a map from the graph $\Ga$
to the graph $\Up$: if $(e,v)\in F(\Ga)$ 
then $(\tau_e,\si_v)\in F(\Si)$.

\item The {\em degree map} $\vd:E(\Ga)\to \bZ_{>0}$
sends an edge $e\in E(\Ga)$ to a positive
integer $d_e$.

\item The {\em genus map} $\vg:V(\Ga)\to \bZ_{\geq 0}$
sends a vertex $v\in V(\Ga)$ to a nonnegative
integer $g_v$.

\item The {\em marking map} $\vs: \{1,2,\ldots,n\}\to V(\Ga)$
is defined if $n>0$.

\item The {\em twisting map} $\vk$ sends an edge $e\in E(\Ga)$ to
an element $k_e\in G_e:= G_{\tau_e}$, a flag $(e,v)$ to an element
$k_{(e,v)}\in G_v:= G_{\si_v}$, a marking $j\in \{1,\ldots,n\}$ to 
an element $k_j \in G_v$ if $\vi(j)=v$. 
\end{enumerate} 

The above maps satisfy the following two constraints:
\begin{enumerate}
\item[(i)] (topology of the domain)
$\displaystyle{\sum_{v\in V(\Ga)} g_v + |E(\Ga)| - |V(\Ga)| +1 = g}$.
\item[(ii)] (topology of the map)
$\displaystyle{ \sum_{e\in E(\Ga)} d_e[\ell_{\tau_e}] =\beta}$.
\item[(iii)] (compatibility along an edge)
Given any edge $e\in E(\Ga)$, let $v, v'\in V(\Ga)$ be its two ends.
Then $k_{(e,v)} \in G_v$ and $k_{(e,v')} \in G_{v'}$ are determined by 
$d_e \in \bZ_{>0}$ and $k_e\in G_e$ \cite[Lemma II.13]{Jo}.    

\item[(iv)] (compatibility at a vertex)
Given $v\in V(\Ga)$, let  $E_v$ and $S_v$ be defined as in Definition \ref{df:unstable}.
Then
$$
\prod_{e\in E_v} k_{(e,v)}^{-1} \prod_{j\in S_v} k_j =1. 
$$
In particular, if $(e,v)\in F(\Ga)$ and $v\in V^1(\Ga)$ then
$k_{(e,v)}=1 \in G_v$. 

\item[(v)] (compatibility with $\vi=(i_1,\ldots, i_n)$)
Given $j\in \{1,\ldots,n\}$, if $\vs(j)= v$, then
the pair $(p_{\si_v}, k_j)$ represent a point in $\cX_{i_j}$, the connected
component of $\cI\cX$ labelled by $i_j$. 
\end{enumerate}

Let $\GgXi$ be the set of all decorated graphs
$\vGa=(\Ga,\vf, \vd, \vg,\vs, \vk)$ satisfying the above
constraints.
\end{definition}

Let $f:(\cC,\fx_1,\ldots,\fx_n)\to \cX$ be a twisted stable map which represents
a $T$ fixed point in $\MgXi$. Let $\bar{f}:(C,x_1,\ldots, x_n)\to X$ be the corresponding
stable map between coarse moduli spaces. Then $\bar{f}:(C,x_1,\ldots,x_n)\to X$
represents a $T$ fixed point in $\MgX$, so we may define, as in Section
\ref{sec:graph-notation}, $\Gamma, \vf, \vd,\vg, \vs$, $C_v$ for each vertex $v\in V(\Ga)$, and $C_e$ 
for each edge $e\in E(\Ga)$. It remains to define the twisting map $\vk$. Let $\cC_v$ (resp. $\cC_e$) be
the preimage of $C_v$ (resp. $C_e$) under the projection $\cC\to C$. 
\begin{itemize}
\item Given an edge $e\in E(\Gamma)$, the map $f_e:=f|_{\cC_e}:\cC_e\to \fl_\tau$  is determined by 
the degree $d_e$ of the map $\bar{f}_e:=\bar{f}|_{C_e}:C_e =\bP^1 \to \ell_\tau =\bP^1$ and
$k_e\in G_{\tau_e}$. Define $\vk(e)=k_e$. 
\item Given $(e,v)\in F(\Ga)$, let $\fy(e,v) =\cC_e \cap \cC_v$. 
Define $\vk(e,v) =k_{(e,v)}\in G_v$ to
be the image of the generator of the stabilizer of the stacky point
$\fy(e,v)$ in the orbicurve $\cC_e$.
\item Under the evaluation map $\ev_j$, the $j$-th marked point $\fx_j$ is mapped to 
$(\fp_\si, k)$ in the inertial stack $\cI\cX$, where $\si\in V(\Sigma)$ and $k\in G_\si$. 
Then $\vf\circ \vs(j)=\si$. Define $\vk(j)=k_j= k$. 
\end{itemize}
Define
\begin{equation}\label{eqn:rev}
r_{(e,v)}= |\langle k_{(e,v)}\rangle|.
\end{equation}
where $\langle k_{(e,v)}\rangle$ is the subgroup of $G_v$ generated by $k_{(e,v)}$. 
Suppose that $v, v'\in V(\Ga)$ are the two end points of the edge $e\in E(\Ga)$.
Then 
$$
\fl_\tau^\rig \cong \cC_{r(\tau_e, \si_v), r(\tau_e, \si_{v'})},
\quad
\cC_e \cong \cC_{r_{(e,v)}, r_{(e,v')}}.
$$

To summarize, we  have a map from $\MgXi^T$ to the discrete set $\GgXi$. 
Let $\cF_\vGa\subset \MgXi^T$ denote the preimage of $\vGa$.
Then
$$
\MgXi^T=\bigsqcup_{\vGa\in \GgXi}\cF_\vGa
$$
where the right hand side is a disjoint union of connected components.

We now describe the fixed locus $\cF_\vGa$ associated to
each decorated graph $\vGa\in \GgXi$. 
Given an edge $e\in E(\Ga)$, the map $f_e: \cC_e\to \fl_\tau$,
where $\tau=\vf(e)$, is determined by $\vGa$ up to isomorphism. 
The automorphism group of $f_e$ is $G_e\times \bZ_{\vd(e)}$. 
The moduli space of $f_e$ is
$$
\cM_e = \cB (G_e \times \bZ_{\vd(e)}). 
$$
Given a stable vertex $v\in V^S(\Gamma)$,  
the map $f_v:=f|_{\cC_v}: \cC_v\to \fp_\si =\cB G_v$, where $\si=\vf(v)$, represents a point
in $\Mbar_{g_v, E_v\cup S_v}(\fp_\si)$, where $E_v$ and $S_v$ are defined as in Definition
\ref{df:unstable}.
For each $e\in E_v\subset E(\Ga)$, there is an evaluation map
$$
\ev_{(e,v)}:\Mbar_{g_v,E_v\cup S_v}(\fp_\si) \to \cI \fp_{\si_v}. 
$$
For each $j\in S_v \subset \{1,\ldots,n\}$, there is an evaluation map
$$
\ev_j:\Mbar_{g_v,E_v\cup S_v}(\fp_\si)\to \cI \fp_{\si_v}.
$$
We have 
$$
\cI\fp_{\si_v} \cong \cI \cB G_v =\bigsqcup_{k\in G_v} (\cB G_v)_k,
$$
where $(\cB G_v)_k$ are connected components of $\cI\cB G_v$ (see Example \ref{ex:BG}). 
The moduli space of $f_v$ is 
$$
\Mbar_{g_v,\vi_v}(\cB G_v): = \bigcap_{e\in E_v} \ev_{(e,v)}^{-1}( (\cB G_v)_{k_{(e,v)}^{-1}})\cap \bigcap_{j\in S_v} \ev_j^{-1}( (\cB G_v)_{k_j}). 
$$

To obtain a $T$ fixed point $[f:(\cC,\fx_1,\ldots, \fx_n)\to \cX]$, we glue
the the above maps $f_v$ and $f_e$ along the nodes. 
Let $V^2(\Ga)$ and $F^S(\Ga)$ be defined as in Definition \ref{df:unstable}.
The nodes of $\cC$ are 
$$
\{ \fy_{(e,v)}=\cC_e\cap \cC_v\mid (e,v)\in F^S(\Ga) \} \cup \{ \fy_v = \cC_v \mid v\in V^2(\Ga), E_v=\{e_1,e_2\}\}.
$$
We define $\widetilde{\cM}_\vGa$ by the following 2-cartesian diagram
$$
\begin{CD}
\widetilde{\cM}_\vGa @>{f_E}>>  &  \prod_{e\in E(\Ga)} \cM_e\\
@V{f_V}VV &  @V{\ev_E}VV \\
\prod_{v\in V^S(\Ga)} \Mbar_{g_v, \vi_v}(\cB G_v)  @>{\ev_V}>> & \prod_{(e,v)\in F^S(\Ga)} \overline{\cI} \cB G_v \times \prod_{v\in V^2(\Ga)} \overline{\cI} \cB G_v
\end{CD}
$$
where $\ev_V$ and $\ev_E$ are given by evaluation at nodes, 
and $\overline{\cI}\cB G_v$ is the rigidified inertia stack.
More precisely:
\begin{itemize}
\item  For every stable flag $(e,v)\in F^S(\Ga)$,  
let $\ev_{(e,v)}$ be the evaluation map at the node $\fy_{(e,v)}$,
and let $\overline{\ev}_{(e,v)} =\iota\circ \ev_{(e,v)}$, where
$\iota$ is the involution on $\cI\cB G_v$.
\item  For each $v\in V^2(\Ga)$,  let $E_V=\{e_1,e_2\}$ (we
pick some ordering of the two edges in $E_v$), 
let $\ev_{(e_1,v)}$ be the evaluation map at the node $\fy_v$, and
let $\ev_{(e_2,v)} =\iota \circ \ev_{(e_2,v)}$. 
\item  Define
\begin{eqnarray*}
\ev_V &=& \prod_{ (e,v)\in F^S(\Ga)} \ev_{(e,v)} \\
\ev_E &=& \prod_{(e,v)\in F^S(\Ga)} \overline{\ev}_{(e,v)} \times \prod_{\tiny \begin{array}{c} v\in V^2(\Ga)\\ (e,v)\in F(\Ga) \end{array}} \ev_{(e,v)}. 
\end{eqnarray*}
\end{itemize}
The fixed locus associated to the decorated graph $\vGa$ is
$$
\cF_\Ga = \widetilde{\cM}_\vGa/\Aut(\vGa).
$$

From the above definitions, up to some finite morphism, $\cF_\Ga$ can be identified with
$$
\cM_{\vGa} :=  \prod_{v\in V^S (\Ga)} \Mbar_{g_v,\vi_v}(\cB G_v),  
$$
and 
$$
[\cF_\vGa] =  c_\vGa [\cM_{\vGa}]\in A_*(\cM_{\vGa})
$$
where
\begin{equation}\label{eqn:cGa}
c_\vGa =  \frac{1}{|\Aut(\vGa)|\prod_{e\in E(\Ga)}(d_e|G_e|)  } \cdot
\prod_{(e,v)\in F^S(\Ga)} \frac{|G_v|}{r_{(e,v)}}\cdot \prod_{v\in V^2(\Ga)} \frac{|G_v|}{r_v}. 
\end{equation}
In the above equation:
\begin{itemize}
\item $\displaystyle{ \frac{|G_v|}{r_{(e,v)}} = |G_v/\langle k_{(e,v)}\rangle| }$,
where $G_v/\langle k_{(e,v)}\rangle$ is the automorpshim
group of $k_{(e,v)}^{-1}$ in the rigidified inertial stack $\overline{\cI} \cB G_v$.
\item  If $v\in V^2(\Ga)$ and $E_v=\{ e_1, e_2\}$, we define $r_v = r(e_1,v)= r(e_2,v)$. 
\end{itemize}
\subsection{Virtual tangent and normal bundles}
Given a decorated graph $\vGa\in \GgXi$ and
a twisted stable map $f:(\cC,\fx_1,\ldots,\fx_n)\to \cX$
which represents a point in the fixed locus
$\cF_{\vGa}$ associated to $\vGa$, let
\begin{eqnarray*}
&& B_1 =  \Hom(\Omega_\cC(\fx_1+\cdots+\fx_n),\cO_\cC),\quad B_2 =   H^0(\cC,f^*T\cX)\\
&& B_4 = \Ext^1(\Omega_\cC(\fx_1+\cdots+ \fx_n),\cO_\cC),\quad B_5= H^1(\cC,f^*T\cX)
\end{eqnarray*}
$T$ acts on $B_1, B_2, B_3, B_4$. 
Let $B_i^m$ and $B_i^f$ be the moving and fixed parts
of $B_i$, respectively. Then
\begin{equation}
0\to B_1^f\to B_2^f\to T^{1,f}\to B_4^f\to B_5^f\to T^{2,f}\to 0
\end{equation}
\begin{equation}
 0\to B_1^m\to B_2^m\to T^{1,m}\to B_4^m\to B_5^m\to T^{2,m}\to 0
\end{equation}

The irreducible components of $\cC$ are
$$
\{ \cC_v\mid v\in V^S(\Ga)\}\cup\{\cC_e\mid e\in  E(\Ga) \}.
$$
Recall that the nodes of $\cC$ are
$$
\{\fy(e,v) =\cC_e\cap \cC_v \mid (e,v)\in F^S(\Ga)\}\cup \{ \fy_v=\cC_v \mid v\in V^2(\Ga) \}.
$$

\subsubsection{Automorphisms of the domain} \label{sec:aut-orb}
\begin{eqnarray*}
B_1^f &=&\bigoplus_\edge \Hom(\Omega_{\cC_e}(\fy(e,v)+\fy(e,v')),\cO_{\cC_e})\\
&=& \bigoplus_\edge H^0(\cC_e, T\cC_e(-\fy(e,v)-\fy(e,v'))\\
B_1^m&=& \bigoplus_\vone T_{ \fy(e,v) }\cC_e 
\end{eqnarray*}
We define
$$
w_{(e,v)} :=  e^T(T_{\fy(e,v) }\cC_e)=\frac{r(\tau_e,\si_v)\bw(\tau_e,\si_v)}{r_{(e,v)}d_e} \in 
H_T^2(\fy(e,v))= M_\bQ.
$$

\subsubsection{Deformations of the domain} \label{sec:deform-orb}
Given any $v\in V^S(\Ga)$, define
a divisor $\bx_v$ of $\cC_v$ by
$$
\bx_v=\sum_{i\in S_v} \fx_i + \sum_{e\in E_v} \fy(e,v).
$$
Then
\begin{eqnarray*}
B_4^f&=& \bigoplus_{v\in V^S(\Ga)} \Ext^1(\Omega_{\cC_v}(\bx_v),\cO_\cC) = \bigoplus_{v\in V^S(\Ga)}
T\Mbar_{g_v, \vi_v}(\cB G_v)\\
B_4^m &= & \bigoplus_{v\in V^2(\Ga), E_v=\{e,e'\} }
T_{\fy_v}\cC_e\otimes T_{\fy_v} \cC_{e'} \oplus \bigoplus_{(e,v)\in F^S(\Ga)} 
T_{ \fy(e,v) }\cC_v\otimes T_{ \fy(e,v) } \cC_e
\end{eqnarray*}
where
\begin{eqnarray*}
e^T (T_{\fy_v}\cC_e \otimes T_{\fy_v} \cC_{e'})&=& w_{(e,v)}+w_{(e',v)},\quad v\in V^2(\Ga)\\
e^T (T_{\fy(e,v) }\cC_v \otimes T_{\fy(e,v) } \cC_e) &=& w_{(e,v)}-\frac{\bar{\psi}_{(e,v)}}{r_{(e,v)}},\quad v\in V^S(\Ga)
\end{eqnarray*}

\subsubsection{Unifying stable and unstable vertices}
From the discussion in Section \ref{sec:aut-orb} and Section \ref{sec:deform-orb},
\begin{equation} \label{eqn:Bonefour-orb}
\begin{aligned}
\frac{e^T(B_1^m)}{e^T(B_4^m)}=& 
\prod_\vone w_{(e,v)} 
\prod_{v\in V^2(\Ga), E_v=\{e,e'\} }
\frac{1}{w_{(e,v)}+ w_{(e',v)} }\\
&\cdot \prod_{v\in V^S(\Ga)}\frac{1}{\prod_{e\in E_v}(w_{(e,v)}-\bar{\psi}_{(e,v)}/r_{(e,v)}) }.
\end{aligned}
\end{equation}

Recall that
$$
\cM_\vGa =\prod_{v\in V^S(\Ga)} \Mbar_{g_v,\vi_v}(\cB G_v).
$$
$$
c_\vGa =\frac{1}{|\Aut(\vGa)\prod_{e\in E(\Ga)} (d_e|G_e|)}\prod_{(e,v)\in F^S(\Ga)}\frac{|G_v|}{r_{(e,v)} }
\prod_{v\in V^2(\Ga)} \frac{|G_v|}{r_v}. 
$$

To unify the stable and unstable vertices, we use the following
convention for the empty sets $\Mbar_{0,(1)}(\cB G)$ and $\Mbar_{0, (c,c^{-1})}(\cB G)$,
where $1\in G$ is the identity element, and $c\in G$. 
Let $G$ be a finite abelian group. Let $w_1, w_2$ be formal variables.
\begin{itemize}
\item   $\Mbar_{0,(1)}(\cB G)$ is a $-2$ dimensional space, and
\begin{equation}\label{eqn:one-orb}
\int_{\Mbar_{0,(1)}(\cB G)}\frac{1}{w_1-\bar{\psi}_1}=\frac{w_1}{ |G| } 
\end{equation}
\item  $\Mbar_{0,(c,c^{-1})}(\cB G)$ is a $-1$ dimensional space, and
\begin{equation}\label{eqn:two-orb}
\int_{\Mbar_{0,(c, c^{-1})} (\cB G)}\frac{1}{(w_1-\bar{\psi}_1)(w_2-\bar{\psi}_2)}= \frac{1}{(w_1+w_2)\cdot |G| }
\end{equation}
\begin{equation}\label{eqn:one-one-orb}
\int_{\Mbar_{0,(c,c^{-1})}(\cB G)}\frac{1}{w_1-\bar{\psi}_1} =\frac{1}{ |G| }
\end{equation}
\end{itemize}
From \eqref{eqn:one-orb}, \eqref{eqn:two-orb}, \eqref{eqn:one-one-orb}, we obtain the following identities for non-stable vertices:
\begin{itemize}
\item[(i)] If $v\in V^1(\Ga)$ and $(e,v)\in F(\Ga)$, then $r_{(e,v)}=1$, and 
$$
|G_v| \int_{\Mbar_{0,(1)} (\cB G_v) } \frac{1}{w_{(e,v)}-\bar{\psi}_{(e,v)}} = w_{(e,v)}.
$$
\item[(ii)] If $v\in V^2(\Ga)$ and $E_v=\{ e,e'\}$, let $c= \rho(e,v)=\rho(e',v)^{-1}\in G_v$, then
\begin{eqnarray*}
&&  \frac{|G_v|}{r_v}\cdot \frac{|G_v|}{r_v} \cdot \int_{\Mbar_{0, (c,c^{-1})}(\cB G_v)  }
\frac{1}{ (w_{(e,v)} -\bar{\psi}_{(e,v)}/r_v)(w_{(e',v)}-\bar{\psi}_{(e',v)}/r_v) }\\
&=&\frac{|G_v|}{r_v} \cdot \frac{1}{w_{(e,v)} + w_{(e',v)}}.
\end{eqnarray*}
\item[(iii)]If $v\in V^{1,1}(\Ga)$ and $(e,v)\in F(\Ga)$, then
$$
\frac{|G_v|}{r_{(e,v)}} \int_{\Mbar_{0,(c,c^{-1})}(\cB G_v)}\frac{1}{w_{(e,v)}- \bar{\psi}_1/r_{(e,v)} } =1.
$$
\end{itemize}
We then redefine $\cM_\vGa$ and $c_\vGa$ as follows:
\begin{equation}
\cM_\vGa =\prod_{v\in V(\Ga)} \Mbar_{g_v, \vi_v}(\cB G_v),\quad  [\cF_\vGa]=c_\vGa [\cM_\vGa],
\end{equation}
\begin{equation}\label{eqn:unified-c}
c_\vGa = \frac{1}{|\Aut(\vGa)|\prod_{e\in E(\Ga)} (d_e|G_e|)}  \prod_{(e,v)\in F(\Ga)}\frac{|G_v|}{r_{(e,v)}}.  
\end{equation}

With the above conventions \eqref{eqn:one-orb}--\eqref{eqn:unified-c}, we may rewrite \eqref{eqn:Bonefour-orb} as
\begin{equation}
\frac{e^T(B_1^m)}{e^T(B_4^m)}=
\prod_{v\in V(\Ga)} \frac{1}{ \prod_{e\in E_v}(w_{(e,v)}-\bar{\psi}_{(e,v)}/r_{(e,v)})  }.
\end{equation}

The following lemma shows that the conventions \eqref{eqn:one-orb}, \eqref{eqn:two-orb}, and 
\eqref{eqn:one-one-orb} are consistent with
the stable case $\Mbar_{0,(c_1,\ldots, c_n)}(\cB G)$, $n\geq 3$.
\begin{lemma} Let $G$ be a finite abelian group. Let $\vc=(c_1,\ldots, c_n)\in G^n$,
where $c_1 \cdots c_n =1$. Let $w_1,\ldots,w_n$ be formal variables. Then 
\begin{enumerate}
\item[(a)]$\displaystyle{ \int_{\Mbar_{0,\vc}(\cB G)}\frac{1}{\prod_{i=1}^n(w_i-\bar{\psi}_i)}
=\frac{1}{|G|\cdot  w_1\cdots w_n}(\frac{1}{w_1}+\cdots \frac{1}{w_n})^{n-3} }$.
\item[(b)]$\displaystyle{\int_{\Mbar_{0,\vc}(\cB G)}\frac{1}{w_1-\bar{\psi}_1} =\frac{ w_1^{2-n}}{|G|} }$.
\end{enumerate}
\end{lemma}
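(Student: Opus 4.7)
The plan is to reduce the orbifold integrals to the ordinary descendant integrals on $\Mbar_{0,n}$ via Corollary \ref{cor:abelian-psi}, and then invoke Lemma \ref{lemma:psi} (the non-orbifold analogue, which has already been proved). The cases $n=1$ and $n=2$ are true by definition, namely by the conventions \eqref{eqn:one-orb}, \eqref{eqn:two-orb}, \eqref{eqn:one-one-orb}, so it suffices to treat $n \geq 3$, where the moduli space $\Mbar_{0,\vc}(\cB G)$ is genuinely stable.

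For $n \geq 3$, I would expand each factor as a geometric series
\[
\frac{1}{w_i - \bar{\psi}_i} = \frac{1}{w_i}\sum_{a_i \geq 0} \left(\frac{\bar{\psi}_i}{w_i}\right)^{a_i},
\]
and integrate term by term over the $(n-3)$-dimensional stack $\Mbar_{0,\vc}(\cB G)$. Only the monomials with $a_1 + \cdots + a_n = n-3$ contribute. By Corollary \ref{cor:abelian-psi} (with $g=0$ and noting $c_1\cdots c_n = 1$ is forced by stability), each surviving descendant integral equals
\[
\int_{\Mbar_{0,\vc}(\cB G)} \bar{\psi}_1^{a_1}\cdots \bar{\psi}_n^{a_n} = |G|^{-1} \int_{\Mbar_{0,n}} \psi_1^{a_1}\cdots \psi_n^{a_n} = \frac{(n-3)!}{|G|\cdot a_1!\cdots a_n!},
\]
using the genus zero formula \eqref{eqn:genus-zero-psi}. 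Summing via the multinomial theorem gives exactly $\frac{1}{|G|\cdot w_1 \cdots w_n}\bigl(\tfrac{1}{w_1}+\cdots+\tfrac{1}{w_n}\bigr)^{n-3}$, proving (a). For (b), the same geometric-series expansion yields a single surviving term $a_1 = n-3$ (with $a_j=0$ for $j \geq 2$), and the evaluation is immediate; alternatively, one may observe that (b) is obtained by specializing (a) in the limit $w_2,\ldots,w_n \to \infty$ after multiplying by $w_2\cdots w_n$, reproducing $w_1^{2-n}/|G|$.

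There is no real obstacle here: the proof is a line-by-line transcription of the proof of Lemma \ref{lemma:psi}, with the single modification that Witten's conjecture is replaced by the orbifold reduction of Jarvis--Kimura (Corollary \ref{cor:abelian-psi}), which supplies the overall factor $|G|^{-1}$. The only point requiring a small check is consistency of the unstable conventions \eqref{eqn:one-orb}--\eqref{eqn:one-one-orb} with the formulas in (a) and (b) for $n=1,2$; this is a direct substitution, exactly parallel to the non-orbifold case treated in Lemma \ref{lemma:psi}.
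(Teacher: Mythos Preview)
Your proposal is correct and follows essentially the same approach as the paper's proof: handle $n=1,2$ by the unstable conventions \eqref{eqn:one-orb}--\eqref{eqn:one-one-orb}, and for $n\geq 3$ reduce via Corollary~\ref{cor:abelian-psi} to Lemma~\ref{lemma:psi}. One small correction: the condition $c_1\cdots c_n=1$ is a hypothesis of the lemma (needed for $\Mbar_{0,\vc}(\cB G)$ to be nonempty), not something ``forced by stability.''
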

\begin{proof}
The unstable cases $n=1$ and $n=2$ follow from the definitions
\eqref{eqn:one-orb} and \eqref{eqn:two-orb}, respectively.
The stable case ($n\geq 3$) follows from Corollary \ref{cor:abelian-psi} and Lemma \ref{lemma:psi}.
\end{proof}

\subsubsection{Deformation of the map}
We first introduce some notation. Given $\si\in \Si(r)$ and $ k \in G_\si$, 
let $\bigl(T_{\fp_\si}\cX\bigr)^k$ denote the subspace which is invariant
under the action of $k$ on $T_{\fp_\si}\cX$. Then
$$
\bigl(T_{\fp_\si}\cX\bigr)^k =\bigl(T_{\fp_\si}\cX\bigr)^{k^{-1}}. 
$$

Consider the normalization sequence
\begin{equation}\label{eqn:normalize-orb}
\begin{aligned}
0 &\to \cO_\cC\to \bigoplus_{v\in V^S(\Ga)} \cO_{\cC_v} \oplus \bigoplus_{e\in E(\Ga)} \cO_{\cC_e}\\
& \to \bigoplus_{v\in V^2(\Ga)} \cO_{\fy_v}
\oplus \bigoplus_{(e,v)\in F^S(\Ga) } \cO_{ \fy(e,v) }\to 0.
\end{aligned}
\end{equation}
We twist the above short exact sequence of sheaves
by $f^*T\cX$. The resulting short exact sequence gives
rise a long exact sequence of cohomology groups
\begin{eqnarray*}
0&\to& B_2 \to \bigoplus_{v\in V^S(\Ga)} H^0(\cC_v)\oplus
\bigoplus_{e\in E(\Ga)}H^0(\cC_e) \\
&\to& \bigoplus_{\tiny \begin{array}{c} v\in V^2(\Ga) \\ E_v=\{e,e'\} \end{array}} (T_{f(\fy_v)}\cX)^{k_{(e,v)}}
\oplus \bigoplus_{(e,v)\in F^S(\Ga)} \bigl(T_{f(\fy(e,v))}\cX\bigr)^{k_{(e,v)}} \\ 
&\to& B_5 \to \bigoplus_{v\in V^S(\Ga)} H^1(\cC_v)\oplus
\bigoplus_{e\in E(\Ga)}H^1(\cC_e) \to 0.
\end{eqnarray*}
where
$$
H^i(\cC_v) = H^i(\cC_v, f_v^*T\cX),\quad 
H^i(\cC_e) = H^i(\cC_e, f_e^*T\cX)
$$
for $i=0,1$. 

$f(\fy_v)=\fp_{\si_v}= f(\fy(e,v))$. Given $(e,v)\in F(\Gamma)$, define
\begin{equation}\label{eqn:hev}
\bh(e,v) =e^T(\bigl(T_{\fp_\si} \cX\bigr)^{k_{(e,v)}}) 
=\prod_{(\tau,\si_v)\in F(\Si), \langle k_{(e,v)}\rangle  \subset G_\tau} \bw(\tau,\si_v). 
\end{equation}
The map $B_1\to B_2$  sends
$H^0(\cC_e, T\cC_e(-\fy(e,v)-\fy(e',v)))$ isomorphically
to $H^0(\cC_e, f_e^*T\fl_{\tau_e})^f$, the 
fixed part of $H^0(\cC_e, f_e^*T\fl_{\tau_e})$.

It remains to compute
$$
\bh(v) := \frac{ e^T(H^1(\cC_v, f_v^*T\cX)^m) }{e^T(H^0(\cC_v, f_v^*T\cX)^m)} ,\quad
\bh(e) := \frac{e^T(H^1(\cC_e, f_e^*T\cX)^m)}{e^T(H^0(\cC_e, f_e^* T\cX)^m)}
$$
We first introduce some notation.
\begin{itemize}
\item If $v\in V^S(\Ga)$, then there is a cartesian diagram
$$
\begin{CD}
\tcC_v @>{\tf_v}>> \mathrm{pt}\\
@VVV @VVV\\
\cC_v @>{f_v }>> \cB G_v.
\end{CD} 
$$
Let $\hG_v$ denote the subgroup of $G_v$ generated by the monodromies
of the $G_v$-cover $\tcC_v\to \cC_v$. Then 
the number of connected components of 
$\tcC_v$ is $|G_v/\hG_v|$, and each connected component
is a $\hG_v$-cover of $\cC_v$.

\item Given $(\tau,\si)\in F(\Si)$, let  $\phi(\tau,\si)\in G_\si^*$ be the irreducible character which
corresponds to the 1-dimensional $G_\si$-representation $T_{p_\si}\fl_\tau$.
\item Given an irreducible character $\phi$ of $G_v$, let $\bC_\phi$ denote
the 1-dimensional $G_v$-representation associated to $\phi$. Define
$$
\Lambda_\phi^\vee(u)=\sum_{i=0}^{\rank \bE_\phi} (-1)^i \lambda_i^\phi u^{\rank \bE_\phi-i},
$$
where $\lambda_i^\phi \in A^i(\Mbar_{g_v, \vi_v}(\cB G_v))$ are Hurwitz-Hodge classes associated
to $\phi\in G_v^*$. Here $\rank\bE_\rho$ is the rank of $\bE_\rho\to\Mbar_{g_v,\vi_v}(\cB G_v)$.
The rank of a Hurwitz-Hodge bundle $\bE_\rho \to \Mbar_{g,\vc}(\cB G)$, where
$G$ is any finite group and $\rho\in G^*$,  is given in Section \ref{sec:hurwitz-hodge}.  
\item Given a $G_v$ representation $V$, let $V^{G_v}$ denote the subspace
on which $G_v$ acts trivially. 
\end{itemize}
\begin{lemma}
Suppose that $v\in V^S(\Ga)$ and $\vf(v)=\si\in \Si(r)$. Then
\begin{equation} \label{eqn:hv}
\bh(v) =  \frac{ \displaystyle{ \prod_{(\si,\tau)\in E(\Ga)} \Lambda^\vee_{\phi(\tau,\si)}(\bw(\tau,\si)) } }{ 
\displaystyle{ \prod_{(\si,\tau)\in E(\Ga), \hG_v \subset G_\tau} \bw(\tau,\si)} }
\end{equation}
\end{lemma}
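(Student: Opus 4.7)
\medskip

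\noindent\textbf{Proof plan.} The plan is to decompose the $G_v$-representation $T_{\fp_{\si_v}}\cX$ into one-dimensional isotypic components and compute the cohomology contribution of each summand separately.

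First, using the description of the toric graph in Section \ref{sec:toric-graph-orb}, I would write
\[
T_{\fp_{\si_v}}\cX \;=\; \bigoplus_{(\tau,\si_v)\in F(\Si)} T_{\fp_{\si_v}}\fl_\tau,
\]
where each summand is a one-dimensional $G_v$-representation with character $\phi(\tau,\si_v)$ and with $T$-weight $\bw(\tau,\si_v)$. Pulling back along $f_v:\cC_v\to \cB G_v$ gives
\[
f_v^* T\cX \;=\; \bigoplus_{(\tau,\si_v)\in F(\Si)} f_v^*\cL_{\phi(\tau,\si_v)},
\]
so it suffices to compute, for each irreducible character $\phi=\phi(\tau,\si_v)$, the quantity
\[
\bh_\phi \;:=\; \frac{e^T\bigl(H^1(\cC_v, f_v^*\cL_\phi)^m\bigr)}{e^T\bigl(H^0(\cC_v, f_v^*\cL_\phi)^m\bigr)},
\]
where $T$ acts on $f_v^*\cL_\phi$ with weight $w:=\bw(\tau,\si_v)\in M_\bQ\setminus\{0\}$.

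Next, I would analyze $H^0$ using the \'etale $G_v$-cover $\tcC_v\to \cC_v$. Since pulling back $\cL_\phi$ to $\tcC_v$ gives the trivial bundle with $G_v$-action by $\phi$, we have
\[
H^0(\cC_v, f_v^*\cL_\phi) \;=\; H^0(\tcC_v,\cO_{\tcC_v})^{\phi},
\]
the $\phi$-isotypic component. Because each connected component of $\tcC_v$ is a proper connected curve on which $\hG_v$ acts as deck transformations, $H^0(\tcC_v,\cO) = \mathrm{Ind}_{\hG_v}^{G_v}\mathbf{1}$, so the $\phi$-isotypic component is one-dimensional when $\phi|_{\hG_v}=1$ (equivalently $\hG_v\subset G_\tau = \ker\phi(\tau,\si_v)$) and zero otherwise. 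In the nonzero case, the $T$-weight on this one-dimensional space is $w$, and since $w\neq 0$ this contribution is entirely in the moving part. Hence the denominator of $\bh_\phi$ equals $w$ if $\hG_v\subset G_\tau$, and $1$ otherwise --- matching the denominator of \eqref{eqn:hv}.

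For $H^1$, I would invoke the definition of the Hurwitz-Hodge bundle $\bE_\phi$ on $\Mbar_{g_v,\vi_v}(\cB G_v)$, together with Riemann-Roch for twisted curves (Section \ref{sec:orbRR}), to identify $H^1(\cC_v, f_v^*\cL_\phi)$ fiberwise with $\bE_\phi^\vee$; note that in the subtle case $\phi\neq 1$ but $\phi|_{\hG_v}=1$, the age contributions at the stacky points vanish (since all relevant monodromies lie in $\hG_v\subset\ker\phi$), making $\rank\bE_\phi = g_v$, consistent with the one-dimensional $H^0$. Then, twisting by the trivial line bundle $\bC_w$ with $T$-weight $w$,
\[
e^T\bigl(H^1(\cC_v, f_v^*\cL_\phi)\bigr) \;=\; e^T\bigl(\bE_\phi^\vee\otimes \bC_w\bigr) \;=\; \sum_{i=0}^{\rank\bE_\phi}(-1)^i\lambda_i^\phi\, w^{\rank\bE_\phi-i} \;=\; \Lambda^\vee_\phi(w),
\]
yielding the numerator of \eqref{eqn:hv}. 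Multiplying over all flags $(\tau,\si_v)\in F(\Si)$ gives the stated formula.

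The main obstacle is the careful bookkeeping around the case $\phi|_{\hG_v}=1$ but $\phi\neq 1$: one must verify that the standard identification $H^1\cong \bE_\phi^\vee$ is compatible with the non-vanishing $H^0$, and that the rank of $\bE_\phi$ indeed jumps to $g_v$ in this case so that the Chern polynomial $\Lambda^\vee_\phi(w)$ has the correct degree. All other verifications --- the weight calculation, the vanishing of fixed parts, and the factorization over flags --- are direct consequences of the $G_v$-isotypic decomposition of $f_v^*T\cX$.
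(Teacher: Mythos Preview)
Your proposal is correct and follows essentially the same route as the paper: decompose $T_{\fp_\si}\cX$ into the one-dimensional $G_v$-representations $\bC_{\phi(\tau,\si)}$ indexed by flags $(\tau,\si)\in F(\Si)$, pass to the $G_v$-cover $\tcC_v$ to identify $H^i(\cC_v, f_v^*T\cX)$ with the $G_v$-invariants of $H^i(\tcC_v,\cO)\otimes T_{\fp_\si}\cX$, compute $H^0$ via the regular representation of $G_v/\hG_v$, and identify each $H^1$-summand with $\bE_{\phi(\tau,\si)}^\vee$. Your additional remark on the edge case $\phi\neq 1$ but $\phi|_{\hG_v}=1$ (where the ages vanish and $\rank\bE_\phi=g_v$) is a correct refinement that the paper leaves implicit.
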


\begin{proof} 
We have
\begin{eqnarray*}
H^i(\cC_v, f_v^*T\cX) &=& \left(H^i(\tcC_v,\cO_{\tcC_v})\otimes T_\si \cX \right)^{G_v} \\
&\cong & \bigoplus_{(\tau,\si)\in F(\Ga)} \left(H^i(\tcC_v, \cO_{\tcC_v})\otimes \bC_{\phi(\tau,\sigma)} \right)^{G_v}.
\end{eqnarray*}
The group homomorphism $G_v\to G_v/\hG_v$ induces an inclusion
$(G_v/\hG_v)^* \to G_v^*$ of sets of irreducible characters, so 
$(G_v/\hG_v)^*$ can be viewed as a subset of $G_v^*$.
$H^0(\tcC_v,\cO_{\tcC_v})$ is the regular representation of $G_v/\hG_v$, so 
$$
H^0(\tcC_v,\cO_{\tcC_v}) = \bigoplus_{\phi\in (G_v/\hG_v)^*} \bC_\phi.
$$
$\phi(\tau,\si) \in (G_v/\hG_v)^*$ iff $\hG_v\subset G_\tau$, so 
$$
e_T\Big( \big(H^0(\tcC_v,\cO_{\tcC_v}) \otimes \bC_{\phi(\tau,\sigma)}\big)^{G_v}\Big)
=\begin{cases} \bw(\tau,\si), & \hG_v\subset G_\tau,\\
1, & \hG_v \not \subset G_\tau.
\end{cases}
$$
Therefore,
\begin{equation}\label{eqn:denominator}
e_T(H^0(\cC_v, f_v^*T\cX)^m)=e_T(H^0(\cC_v, f_v^*T\cX)) = \prod_{(\tau,\si)\in F(\Ga), \hG_v \subset G_\tau} \bw(\tau,\si)
\end{equation}
$$
\bigl(H^1(\tcC_v,\cO_{\cC_v})\otimes \bC_{\phi(\tau,\si)}\bigr)^{G_v} = \bE_{\phi(\tau,\si)}^\vee,
$$
so 
\begin{equation}\label{eqn:numerator}
e_T(H^1(\cC_v, f_v^*T\cX)^m)= e_T(H^1(\cC_v, f_v^*T\cX)) = \prod_{(\tau,\si)\in F(\Ga)} \Lambda^\vee_{\phi(\tau,\si)} (\bw(\tau,\si)).
\end{equation}
Equation \eqref{eqn:hv} follows from \eqref{eqn:denominator} and \eqref{eqn:numerator}. 
\end{proof}

\begin{lemma}
Suppose that $e\in E(\Ga)$. Let $d=d_e\in \bZ_{>0}$, and let $\tau=\vf(e)\in \Si(r-1)_c$.
Define $\si, \si', \tau_i, \tau_i', a_i$ as in Section \ref{sec:toric-graph}.
Suppose that $(e,v), (e,v')\in F(\Ga)$, $\vf(v)= \si$, $\vf(v')=\si'$. 
Then $k_{(e,v)}\in G_\si$ acts on $T_{\fp_\si}\fl_\tau$ by multiplication
by $e^{2\pi\sqrt{-1}\langle d/r(\tau,\si)\rangle }$, and acts on $T_{\fp_\si}\fl_{\tau_i}$ by 
$e^{2\pi\sqrt{-1} \ep_j}$, where
$$
\langle \frac{d}{r(\tau,\si)}\rangle , \ep_1,\ldots, \ep_{r-1} 
\in \{ 0, \frac{1}{r_{(e,v)} },\ldots, \frac{r_{(e,v)}-1 }{r_{(e,v)}} \}.  
$$
Define
$$
\uu=r(\tau,\si)\bw(\tau,\si) = -r(\tau,\si')\bw(\tau,\si').
$$
Then
\begin{equation}\label{eqn:he}
\bh(e)=\frac{ (\frac{d}{\uu})^{\lfloor\frac{d}{r(\tau,\si)}\rfloor } }
{\lfloor\frac{d}{r(\tau,\si)}\rfloor !}
\frac{ (-\frac{d}{\uu})^{\lfloor\frac{d}{r(\tau,\si')}\rfloor } }
{\lfloor\frac{d}{r(\tau,\si')}\rfloor !}\prod_{i=1}^{r-1} \mathbf{b}_i
\end{equation}
where
\begin{equation}\label{eqn:b-orb}
\mathbf{b}_i=\begin{cases}
\displaystyle{\prod_{j=0}^{\lfloor da_i-\ep_i \rfloor} (\bw(\tau_i,\si) -(j+\ep_i) \frac{\uu}{d})^{-1} }, 
& a_i\geq 0,\\
\displaystyle{\prod_{j=1}^{\lceil \ep_i -da_i -1 \rceil } (\bw(\tau_i,\si)+ (j-\ep_i) \frac{\uu}{d}) },  
& a_i <0.
\end{cases}
\end{equation}
\end{lemma}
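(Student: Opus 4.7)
The plan is to apply the orbifold Riemann--Roch computation of Example \ref{ex:edge-orb} summand-by-summand to the splitting
$$
f_e^* T\cX \;=\; f_e^* T\fl_{\tau_e} \;\oplus\; \bigoplus_{i=1}^{r-1} f_e^*\cL_i
$$
coming from the normal bundle decomposition $N_{\fl_{\tau_e}/\cX} = \cL_1\oplus\cdots\oplus\cL_{r-1}$. Since $\cC_e\cong \cC_{r_{(e,v)},r_{(e,v')}}$ and each summand is a $T$-equivariant line bundle on this twisted $\bP^1$, the character $\ch^T(H^0-H^1)$ of each summand is given directly by the general version of Example \ref{ex:edge-orb}, and $\bh(e)$ is then the product, over the summands, of $e^T(H^1)^m/e^T(H^0)^m$.

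For the tangent direction $f_e^*T\fl_{\tau_e}$, the parameters in Example \ref{ex:edge-orb} translate as $\tilde w_1 = -\uu/d$ (read off from $c_1^T(T_{\fy(e,v)}\cC_e)=w_{(e,v)}=-\tilde w_1/r_{(e,v)}$), $\tilde w_2 = \bw(\tau,\si)$, $\tilde w_3 = \bw(\tau,\si')$, orbifold degree $a = d/r(\tau,\si)+d/r(\tau,\si')>0$, and fractional monodromy $\ep = \langle d/r(\tau,\si')\rangle$ at the $v'$-end. A direct rewrite of the $a\geq 0$ formula there gives
$$
\ch^T\bigl(H^0(\cC_e, f_e^*T\fl_\tau)\bigr) \;=\; \sum_{j=-\lfloor d/r(\tau,\si')\rfloor}^{\lfloor d/r(\tau,\si)\rfloor} e^{j\uu/d},
$$
so $H^1=0$, the $j=0$ term is the unique fixed section, and inverting the product of moving weights $\prod_{j=1}^{\lfloor d/r(\tau,\si)\rfloor}(j\uu/d)\cdot \prod_{j=1}^{\lfloor d/r(\tau,\si')\rfloor}(-j\uu/d)$ produces exactly the first two factors in \eqref{eqn:he}.

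For each normal direction $f_e^*\cL_i$, the same Example applies with $\tilde w_1 = -\uu/d$, $\tilde w_2 = \bw(\tau_i,\si)$, $\tilde w_3 = \bw(\tau_i',\si') = \bw(\tau_i,\si) - a_i\uu$, orbifold degree $a = da_i$, and fractional part $\ep_i$ at the $v$-end encoding the $k_{(e,v)}$-action on $T_{\fp_{\si_v}}\fl_{\tau_i}$. When $a_i\geq 0$, Example \ref{ex:edge-orb} gives $H^1=0$ and $H^0$ with $T$-weights $\bw(\tau_i,\si)-(j+\ep_i)\uu/d$ for $0\leq j\leq \lfloor da_i-\ep_i\rfloor$, all of which are moving; inverting the product of these weights yields the $a_i\geq 0$ branch of $\mathbf{b}_i$. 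Symmetrically, for $a_i<0$ the Example gives $H^0=0$ and $H^1$ with weights $\bw(\tau_i,\si)+(j-\ep_i)\uu/d$ for $1\leq j\leq \lceil \ep_i-da_i-1\rceil$, producing the $a_i<0$ branch.

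The main obstacle will be the careful bookkeeping of fractional parts and floor/ceiling functions: for each summand one must identify which endpoint of $\cC_e$ contributes which fractional monodromy, verify that the two fractional parts at opposite ends sum to the total orbifold degree modulo one, and confirm that the integer ranges $\lfloor da_i-\ep_i\rfloor$ and $\lceil \ep_i-da_i-1\rceil$ are exactly the index sets arising from the \v{C}ech computation in Example \ref{ex:edge-orb}. Sign accounting is equally delicate: the product $\prod_{j=1}^{\lfloor d/r(\tau,\si')\rfloor}(-j\uu/d)$ must be absorbed cleanly into $(-d/\uu)^{\lfloor d/r(\tau,\si')\rfloor}/\lfloor d/r(\tau,\si')\rfloor!$, and one should check that the degenerate limit $\ep_i=0$ with $r(\tau,\si)=r(\tau,\si')=1$ recovers the smooth formula \eqref{eqn:etaud} already established.
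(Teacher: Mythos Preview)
Your proposal is correct and follows essentially the same approach as the paper: decompose $f_e^*T\cX$ into the tangent direction $f_e^*T\fl_\tau$ and the normal line bundles $f_e^*\cL_i$, then apply the \v{C}ech computation of Example~\ref{ex:edge-orb} to each summand separately, tracking the fractional monodromies at the two stacky endpoints. The paper carries out exactly this bookkeeping; the only thing to watch is that in your normal-direction paragraph you identify $\tilde w_2=\bw(\tau_i,\si)$ (placing the $v$-end at $\fp_1$) but then place $\ep_i$ at the $v$-end, whereas in Example~\ref{ex:edge-orb} the $\ep$ lives at $\fp_2$---you should swap to $\tilde w_3=\bw(\tau_i,\si)$ (or equivalently relabel the endpoints) so that the formula $\sum_{j=0}^{\lfloor da_i-\ep_i\rfloor}e^{\bw(\tau_i,\si)-(j+\ep_i)\uu/d}$ matches line for line.
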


\begin{proof}
Let
$$
\bw_i =\bw(\tau_i,\si),\quad i=1,\ldots,r-1.
$$
We have
$$
N_{\fl_\tau/\cX}=\cL_1\oplus \cdots \oplus \cL_{r-1}.
$$
\begin{itemize} 
\item The weights of $T$-actions on $(\cL_i)_{\fp_\si}$
and $(\cL_i)_{\fp_{\si'}}$ are 
$\bw_i$ and $\bw_i- a_i \uu $,
respectively. 
\item The weights of $T$-action on $T_{\fp_\si}\fl_\tau$
and $T_{\fp_{\si'}}\fl_\tau$ are 
$\displaystyle{ \frac{\uu}{r(\tau,\si)} }$ and 
$\displaystyle{ \frac{-\uu}{r(\tau,\si')} }$, respectively.
\item  Let $\fp_v =f_e^{-1}(\fp_\si), \fp_{v'}=f_e^{-1}(\fp_{\si'})$
be the two torus fixed points in $\cC_e$. Then
the weights of $T$-action on $T_{\fp_v}\cC_e$ and
$T_{\fp_{v'}}\cC_e$ are
$\displaystyle{ \frac{\uu}{d r_{(e,v)} } }$ and 
$\displaystyle{ \frac{-\uu}{d r_{(e,v')} } }$, respectively.
\end{itemize}
By Example \ref{ex:edge-orb},  
$$
\ch_T(H^1(\cC_e, f_e^*\cL_i)-H^0(\cC_e, f_e^*\cL_i))
= \begin{cases}
- \displaystyle{ \sum_{j=0}^{ \lfloor da_i-\ep_i\rfloor } e^{\bw_i - (j+\ep_i) \frac{\uu}{d} } }, & a_i\geq 0,\\
\displaystyle{ \sum_{j=1}^{ \lceil \ep_i -da_i-1\rceil } e^{\bw_i+(j-\ep_i) \frac{\uu}{d} }  }, & a_i<0.
\end{cases}
$$
Note that $\bw_i - (j+\ep_i) \uu$ and $\bw_i + (j-\ep_i)\uu$  are nonzero for any $j\in \bZ$ since
$\bw_i$ and $\uu$ are linearly independent for $i=1,\ldots,r-1$. So
$$
\frac{e^T\left(H^1(\cC_e,f_e^*\cL_i)^m\right)}{e^T\left(H^0(\cC_e,f_e^*\cL_i)^m\right)}
=\frac{e^T\left(H^1(\cC_e,f_e^*\cL_i)\right)}{e^T\left(H^0(\cC_e,f_e^* \cL_i)\right)} =\mathbf{b}_i
$$
where $\mathbf{b}_i$ is defined by \eqref{eqn:b-orb}.

By Example \ref{ex:edge-orb} again, 
\begin{eqnarray*}
&& \ch_T(H^1(\cC_e, f_e^*T\fl_\tau)-H^0(\cC_e, f_e^*T\fl_\tau))\\
&=& \sum_{j \in \bZ,  -\langle \frac{d}{r(\tau,\si)} \rangle \leq j \leq \frac{d}{r(\tau,\si)}+ \frac{d}{r(\tau,\si')} - \langle \frac{d}{r(\tau,\si)} \rangle}
e^{\frac{\uu}{r(\tau,\si)}- (j+ \langle \frac{d}{r(\tau,\si)}\rangle)\frac{\uu}{d} } \\
&=&  = 1+\sum_{j=1}^{\lfloor \frac{d}{r(\tau,\si)} \rfloor} e^{j\frac{\uu}{d}}
+ \sum_{j=1}^{\lfloor \frac{d}{r(\tau,\si')}\rfloor} e^{-j\frac{\uu}{d}}.
\end{eqnarray*}
So
\begin{eqnarray*}
\frac{e^T(H^1(\cC_e,f_e^*T\fl_\tau)^m)}{e^T(H^0(\cC_e,f_e^*T\fl_\tau)^m)}
&=& \prod_{j=1}^{\lfloor \frac{d}{r(\tau,\si)}\rfloor}\frac{1}{j\frac{\uu}{d} }\prod_{j=1}^{\lfloor \frac{d}{r(\tau,\si')} \rfloor} \frac{1}{-j\frac{\uu}{d}}\\
&=& \frac{ (\frac{d}{\uu})^{\lfloor\frac{d}{r(\tau,\si)}\rfloor } }{\lfloor\frac{d}{r(\tau,\si)}\rfloor !}
\frac{ (-\frac{d}{\uu})^{\lfloor\frac{d}{r(\tau,\si')}\rfloor } }
{\lfloor\frac{d}{r(\tau,\si')}\rfloor !} 
\end{eqnarray*}
Therefore,
\begin{eqnarray*}
 \frac{e^T(H^1(\cC_e, f_e^* T\cX )^m )}
{e^T (H^0(\cC_e, f_e^*T\cX)^m) }
&=&\frac{e^T(H^1(\cC_e ,f_e^*T\fl_\tau)^m)}{e^T(H^0(\cC_e,f_e^*T\fl_\tau)^m)}
\cdot\prod_{i=1}^{r-1}\frac{e^T(H^1(\cC_e,f_e^*\cL_i)^m)}{e^T(H^0(\cC_e,f_e^*\cL_i)^m)}\\
&=&  \frac{ (\frac{d}{\uu})^{\lfloor\frac{d}{r(\tau,\si)}\rfloor } }
{\lfloor\frac{d}{r(\tau,\si)}\rfloor !}
\frac{ (-\frac{d}{\uu})^{\lfloor\frac{d}{r(\tau,\si')}\rfloor } }
{\lfloor\frac{d}{r(\tau,\si')}\rfloor !} \prod_{i=1}^{r-1} \mathbf{b}_i
\end{eqnarray*}
\end{proof}

From the above discussion, we conclude that
$$
\frac{e^T(B_5^m)}{e^T(B_2^m)} = \prod_{v\in V^2(\Ga), E_v= \{e,e'\} } \bh(e,v) 
\cdot \prod_{(e,v)\in F^S(\Ga)} \bh(e,v) \cdot \prod_{v\in V^S(\Gamma)}\bh(v)\cdot\prod_{e\in E(\Ga)}\bh(e)
$$
where $\bh(e,v)$, $\bh(v)$, and $\bh(e)$ are defined by \eqref{eqn:hev}, \eqref{eqn:hv}, \eqref{eqn:he}, respectively.
To unify the stable and unstable vertices, we define
$$
\bh(v):=\begin{cases}
\displaystyle{ \frac{1}{\bh(e,v)} }, & v\in V^1(\Ga)\cup V^{1,1}(\Ga), \quad E_v=\{e\},\\
\displaystyle{ \frac{1}{\bh(e,v)} =\frac{1}{\bh(e',v)} }, & v\in V^2(\Ga),\quad  E_v=\{e,e'\}.
\end{cases}
$$
Then
$$
\frac{e^T(B_5^m)}{e^T(B_2^m)} 
= \prod_{v\in V(\Ga)} \bh(v)\cdot \prod_{(e,v)\in F(\Ga)} \bh(e,v) \cdot \prod_{e\in E(\Ga)}\bh(e).
$$

\subsection{Contribution from each graph} \label{sec:each-graph-orb}
\subsubsection{Virtual tangent bundle} We have $B_1^f=B_2^f$, $B_5^f=0$. So
$$
T^{1,f}=B_4^f =\bigoplus_{v\in V^S(\Ga)}T\Mbar_{g_v, \vi_v}(\cB G_v),\quad
T^{2,f}=0.
$$
We conclude that
$$
[\prod_{v\in V^S(\Ga)}\Mbar_{g_v, \vi_v}(\cB G_v)]^\vir
=\prod_{v\in V^S(\Ga)}[\Mbar_{g_v, \vi_v}(\cB G_v) ].
$$
\subsubsection{Virtual normal bundle} 
Let $N^\vir_{\vGa}$ be the virtual bundle on $\cM_\vGa$ which corresponds to the virtual normal bundle
of $\cF_{\vGa}$ in $\MgXi$. Then 
\begin{eqnarray*}
\frac{1}{e_T(N^\vir_\vGa)} &=&\frac{e^T(B_1^m)e^T(B_5^m)}{e^T(B_2^m)e^T(B_4^m)}\\
&=& \prod_{v\in V(\Ga)}\frac{\bh(v)}{\prod_{e\in E_v}(w_{(e,v)}-\bar{\psi}_{(e,v)}/r_{(e,v)})} 
\prod_{(e,v)\in F(\Ga)}\bh(e,v) \cdot \prod_{e\in E(\Ga)}\bh(e)
\end{eqnarray*}

\subsubsection{Integrand} 
Given $\si\in \Si(r)$, let
$$
i_\si^*: A^*_T(\cX)\to A^*_T(\fp_\si) =\bQ[u_1,\ldots,u_r]
$$
be induced by the inclusion $i_\si:\fp_\si \to \cX$.  Given $\vGa\in \GgXi$, let
$$
i_\vGa^* :A^*_T(\MgXi)\to A^*_T(\cF_\vGa) \cong A^*_T(\cM_\vGa)
$$
be induced by the inclusion $i_\vGa:\cF_\vGa \to \MgXi$. Then
\begin{equation}\label{eqn:integrand-orb}
\begin{aligned}
&i_\vGa^*\prod_{i=1}^n\left(\ev_i^*\gamma_i^T \cup (\bar{\psi}_i^T)^{a_i}\right)\\
=&\prod_{\tiny \begin{array}{c} v\in V^{1,1}(E)\\ S_v=\{i\}, E_v=\{e\}\end{array}} i^*_{\si_v}\gamma_i^T (-w_{(e,v)})^{a_i} 
\cdot \prod_{v\in V^S(\Ga)}\Bigl(\prod_{i\in S_v} i^*_{\sigma_v}\gamma_i^T\prod_{e\in E_v}\bar{\psi}_{(e,v)}^{a_i}\Bigr)
\end{aligned}
\end{equation}
To unify the stable vertices in $V^S(\Ga)$ and the unstable vertices in $V^{1,1}(\Ga)$ , 
we use the following convention: for $a\in \bZ_{\geq 0}$,
\begin{equation}\label{eqn:one-one-a-orb}
\int_{\Mbar_{0,(c,c^{-1})}(\cB G)}\frac{\bar{\psi}_2^a}{w_1-\bar{\psi}_1}= \frac{(-w_1)^a}{|G|}.
\end{equation}
In particular, \eqref{eqn:one-one-orb} is obtained by setting $a=0$. With the 
convention \eqref{eqn:one-one-a-orb}, we may rewrite \eqref{eqn:integrand-orb} as
\begin{equation}
i_\vGa^*\prod_{i=1}^n\left(\ev_i^*\gamma_i^T \cup (\bar{\psi}_i^T)^{a_i}\right)=
\prod_{v\in V(\Ga)}\Bigl(\prod_{i\in S_v} i^*_{\sigma_v}\gamma_i^T\prod_{e\in E_v}\bar{\psi}_{(e,v)}^{a_i}\Bigr).
\end{equation}

The following lemma shows that the convention \eqref{eqn:one-one-a-orb} is consistent with
the stable case $\Mbar_{0,(c_1,\dots,c_n)}(\cB G)$, $n\geq 3$.
\begin{lemma}  Let $n,a$ be integers, $n\geq 2$, $a\geq 0$. Let $\vc=(c_1,\ldots, c_n)\in G^n$, where
$c_1 \cdots c_n=1$. Then
$$
\int_{\Mbar_{0,\vc}(\cB G)} \frac{\bar{\psi}_2^a}{w_1-\bar{\psi}_1}=
\begin{cases} \displaystyle{ \frac{\prod_{i=0}^{a-1}(n-3-i)}{a! |G| }w_1^{a+2-n} }, & n=2 \textup{ or } 0\leq a\leq n-3.\\
0, & \textup{ otherwise.}
\end{cases} 
$$
\end{lemma}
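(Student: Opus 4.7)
The plan is to mimic the proof of the non-orbifold analogue (Lemma in Section~\ref{sec:each-graph}), substituting the abelian Hurwitz--Hodge evaluation from Corollary~\ref{cor:abelian-psi} for the ordinary genus-zero descendant formula~\eqref{eqn:genus-zero-psi}. The case $n=2$ should follow immediately from the convention~\eqref{eqn:one-one-a-orb}: one just checks that $\prod_{i=0}^{a-1}(n-3-i)/a! = \prod_{i=0}^{a-1}(-1-i)/a! = (-1)^a$ when $n=2$, so that $w_1^{a+2-n}\cdot(-1)^a/|G| = (-w_1)^a/|G|$, matching the right-hand side of \eqref{eqn:one-one-a-orb}.

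For $n \geq 3$, I would expand the integrand as a geometric series in $\bar{\psi}_1/w_1$:
$$\int_{\Mbar_{0,\vc}(\cB G)}\frac{\bar{\psi}_2^a}{w_1-\bar{\psi}_1} \;=\; \frac{1}{w_1}\sum_{k\geq 0}w_1^{-k}\int_{\Mbar_{0,\vc}(\cB G)}\bar{\psi}_1^k\bar{\psi}_2^a.$$
Since $G$ is abelian and $c_1\cdots c_n = 1$, Corollary~\ref{cor:abelian-psi} (with $g=0$) gives
$$\int_{\Mbar_{0,\vc}(\cB G)}\bar{\psi}_1^k\bar{\psi}_2^a \;=\; \frac{1}{|G|}\int_{\Mbar_{0,n}}\psi_1^k\psi_2^a,$$
and by~\eqref{eqn:genus-zero-psi} the right-hand side is $(n-3)!/(k!\,a!)$ when $k+a = n-3$ and vanishes otherwise.

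Therefore in the series exactly one term contributes, namely $k = n-3-a$, provided such a $k$ is a nonnegative integer; this forces $0 \leq a \leq n-3$, recovering the stated range. Collecting the factor $w_1^{-1-k} = w_1^{a+2-n}$ and the combinatorial factor $(n-3)!/((n-3-a)!\,a!) = \prod_{i=0}^{a-1}(n-3-i)/a!$ yields the claimed formula; if instead $a > n-3$ all terms vanish and the integral is zero.

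There is really no main obstacle here: once~\eqref{eqn:one-one-a-orb} is adopted as a convention in the unstable case and Corollary~\ref{cor:abelian-psi} is invoked in the stable case, the argument is a line-by-line transcription of the non-orbifold lemma with an extra global factor of $1/|G|$. The only point worth a sanity check is that the two cases $n=2$ and $n\geq 3$ produce the same closed-form expression, which is what the reformulation of the binomial coefficient as $\prod_{i=0}^{a-1}(n-3-i)/a!$ (valid for all $n \geq 2$) is designed to make transparent.
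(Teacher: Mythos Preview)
Your proposal is correct and follows essentially the same approach as the paper: the paper handles $n=2$ by invoking the convention \eqref{eqn:one-one-a-orb}, and for $n\geq 3$ expands $1/(w_1-\bar{\psi}_1)$ as a geometric series, picks out the unique term $\bar{\psi}_1^{n-3-a}\bar{\psi}_2^a$ by dimension, and evaluates it via Corollary~\ref{cor:abelian-psi} together with the genus-zero formula~\eqref{eqn:genus-zero-psi}. Your write-up is slightly more explicit about the intermediate steps (and the check that the closed form specializes correctly at $n=2$), but the argument is the same.
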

\begin{proof} The case $n=2$ follows from \eqref{eqn:one-one-a-orb}.
For $n\geq 3$, 
\begin{eqnarray*}
&& \int_{\Mbar_{0,\vc}(\cB G)}\frac{\psi_2^a}{w_1-\bar{\psi}_1}
=\frac{1}{w_1} \int_{\Mbar_{0,\vc}(\cB G)}\frac{\bar{\psi}_2^a}{1-\frac{\bar{\psi}_1}{w_1}}
=w_1^{a+2-n} \int_{\Mbar_{0,\vc}(\cB G)} \bar{\psi}_1^{n-3-a}\bar{\psi}_2^a\\
&& = w_1^{a+2-n} \cdot \frac{1}{|G|}\cdot \frac{(n-3)!}{(n-3-a)! a_!}=\frac{\prod_{i=0}^{a-1}(n-3-i)}{a!|G|} w_1^{a+2-n}.
\end{eqnarray*}
\end{proof}

\subsubsection{Integral}\label{sec:vGa-integral-orb}
Let
$$
i^* :A^*_T(\MgXi)\to A^*_T(\MgXi^T) 
$$
be induced by the inclusion $i:\MgXi^T\to \MgXi$. 
The contribution of 
$$
\int_{[\MgXi^T]^{\vir,T}} \frac{i^*\prod_{i=1}^n (\ev_i^*\gamma_i^T\cup (\bar{\psi}_i^T)^{a_i})}{e^T(N^\vir)}
$$
from the fixed locus $\cF_\vGa$ is given by
\begin{eqnarray*}
&& c_\vGa \prod_{e\in E(\Ga)}\bh(e)\prod_{(e,v)\in F(\Ga)} \bh(e,v) 
\prod_{v\in V(\Ga)}\Bigl(\prod_{i\in S_v}i_{\si_v}^*\gamma_i^T\Bigr)\\
&& \quad\quad \cdot \prod_{v\in V(\Ga)}\int_{\Mbar_{g_v,\vi_v}(\cB G_v)}
\frac{\bh(v)\cdot \prod_{e\in E_v}\bar{\psi}^{a_i}_{(e,v)} }{\prod_{e\in E_v}(w_{(e,v)}-\bar{\psi}_{(e,v)}/r_{(e,v)})} 
\end{eqnarray*}
where $c_\vGa\in \bQ$ is defined by \eqref{eqn:unified-c}.

\subsection{Sum over graphs} 
Summing over the contribution from each graph $\vGa$ given
in Section \ref{sec:vGa-integral-orb} above, we obtain the following
formula.
\begin{theorem}\label{main-orb}
\begin{equation}\label{eqn:sum-orb}
\begin{aligned}
& \langle \bar{\tau}_{a_1}(\gamma_1^T)\cdots \bar{\tau}_{a_n}(\gamma_n^T)\rangle^{\cX_T}_{g,\beta}\\
=& \sum_{\vGa\in G_{g,\vi}(\cX,\beta)} c_\vGa
\prod_{e\in E(\Ga)} \bh(e) \prod_{(e,v)\in F(\Ga)} \bh(e,v)
\prod_{v\in V(\Ga)}\Bigl(\prod_{i\in S_v} i_{\si_v}^* \gamma_i^T \Bigr) \\
&\cdot \prod_{v\in V(\Ga)}
\int_{\Mbar_{g,\vi_v} (\cB G_v) }
\frac{\bh(v)\prod_{i\in S_v} \bar{\psi}_i^{a_i}}{\prod_{e \in E_v} (w_{(e,v)}-\bar{\psi}_{(e,v)}/r_{(e,v)} )}.
\end{aligned}
\end{equation}
where $\bh(e)$, $\bh(e,v)$, $\bh(v)$ are given by
\eqref{eqn:he}, \eqref{eqn:hev}, \eqref{eqn:hv}, respectively, and we have
the following convention for the $v\notin V^S(\Ga)$:
\begin{eqnarray*}
&&\int_{\Mbar_{0,(1)}(\cB G)}\frac{1}{w_1-\bar{\psi}_2}= \frac{w_1}{|G|},\quad
\int_{\Mbar_{0,(c,c^{-1})}(\cB G)}\frac{1}{(w_1-\bar{\psi}_1)(w_2-\bar{\psi}_2)}=\frac{1}{|G|\cdot(w_1+w_2)},\\
&& \int_{\Mbar_{0,(c,c^{-1})}(\cB G)}\frac{\bar{\psi}_2^a}{w_1-\bar{\psi}_1}=\frac{(-w_1)^a}{|G|},\quad a\in \bZ_{\geq 0}.
\end{eqnarray*}
\end{theorem}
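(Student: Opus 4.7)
The plan is to prove Theorem \ref{main-orb} by virtual localization applied to $[\MgXi]^{w,T}$, following the template of the scheme-theoretic case (Theorem \ref{main}) with modifications required by the stacky structure of the source and target. First, I would apply the virtual localization formula of Section \ref{sec:VL} to express the equivariant orbifold Gromov-Witten invariant as a sum over connected components of $\MgXi^T$. The classification of these components by decorated graphs $\vGa=(\Ga,\vf,\vd,\vg,\vs,\vk) \in \GgXi$, carried out in Section \ref{sec:graph-notation-orb}, reduces the problem to computing a contribution from each fixed locus $\cF_\vGa$.

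Second, for each $\vGa$, I would identify $\cF_\vGa$, up to a finite morphism, with the product $\cM_\vGa = \prod_{v\in V(\Ga)} \Mbar_{g_v,\vi_v}(\cB G_v)$ (using the conventions for unstable vertices in $V^1(\Ga)$, $V^{1,1}(\Ga)$, $V^2(\Ga)$ established in Section \ref{sec:each-graph-orb}). The multiplicative factor produced by this identification is $c_\vGa$ of \eqref{eqn:unified-c}; it packages the automorphisms $\Aut(\vGa)$, the cyclic $\bZ_{d_e}$ automorphisms of each totally ramified edge map, the generic stabilizers $G_e$ along each $\fl_{\tau_e}$, and the comparison between the rigidified and non-rigidified inertia stacks at each node, responsible for the ratios $|G_v|/r_{(e,v)}$ coming from the $2$-cartesian diagram defining $\widetilde\cM_\vGa$.

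Third, I would compute $e^T(N^\vir_\vGa)$ via the tangent-obstruction sequence and the normalization sequence \eqref{eqn:normalize-orb}, splitting $B_1,B_2,B_4,B_5$ into moving and fixed parts. The moving parts of $B_1$ and $B_4$ give the node-smoothing and node-automorphism contributions, producing the denominator $\prod_{e\in E_v}(w_{(e,v)}-\bar\psi_{(e,v)}/r_{(e,v)})$ at each vertex, with the rescaling by $r_{(e,v)}$ arising from the stacky structure at the node. The moving parts of $B_2$ and $B_5$ decompose by irreducible components of $\cC$: the edge contribution $\bh(e)$ is obtained by a twisted Riemann-Roch computation on $\cC_e\cong\cC_{r_{(e,v)},r_{(e,v')}}$ using Example \ref{ex:edge-orb}; the vertex contribution $\bh(v)$ for $v\in V^S(\Ga)$ is the ratio of Euler classes of $H^1$ and $H^0$ of $f_v^*T\cX$, which by decomposing the regular representation of $G_v/\hG_v$ yields the ratio of twisted Hurwitz-Hodge Lambda classes $\Lambda^\vee_{\phi(\tau,\si_v)}(\bw(\tau,\si_v))$ appearing in \eqref{eqn:hv}; and the node contribution $\bh(e,v)$ is the equivariant Euler class of the $k_{(e,v)}$-invariant subspace of $T_{\fp_{\si_v}}\cX$, as in \eqref{eqn:hev}.

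Finally, I would assemble the integrand: $\ev_j^*\gamma_j^T$ restricts to $i^*_{\si_{\vs(j)}}\gamma_j^T$, the descendant classes $\bar\psi_j^T$ restrict to $\bar\psi_{(e,v)}$ at the attached flag, and the unification of stable and unstable vertices follows from the conventions \eqref{eqn:one-orb}--\eqref{eqn:one-one-orb} and \eqref{eqn:one-one-a-orb}, whose consistency with the stable range is verified by the lemmas immediately preceding the theorem via Corollary \ref{cor:abelian-psi}. Summing the contributions over $\GgXi$ yields \eqref{eqn:sum-orb}. The main technical obstacle will be the careful accounting of the stacky factors: specifically, tracking the interplay among the monodromy order $r_{(e,v)}$ at each node, the order $r(\tau_e,\si_v)=|G_{\si_v}|/|G_{\tau_e}|$ of the relative generic stabilizer along $\fl_{\tau_e}$, the degree $d_e$, and the resulting weights $w_{(e,v)} = r(\tau_e,\si_v)\bw(\tau_e,\si_v)/(r_{(e,v)}d_e)$, while ensuring that the compatibility conditions (iii)--(v) of Definition \ref{df:GgX-orb} match the automorphism factors in $c_\vGa$ produced by the fiber product description of $\widetilde\cM_\vGa$ built from rigidified evaluations.
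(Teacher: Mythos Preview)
Your proposal is correct and follows essentially the same approach as the paper: the theorem is obtained by summing, over all decorated graphs $\vGa\in\GgXi$, the fixed-locus contributions computed in Section \ref{sec:each-graph-orb} (virtual tangent bundle, virtual normal bundle via $B_1^m,B_2^m,B_4^m,B_5^m$, and integrand), which in turn rely on the graph description of $\MgXi^T$ from Section \ref{sec:graph-notation-orb} and the edge and vertex lemmas giving \eqref{eqn:he}, \eqref{eqn:hev}, \eqref{eqn:hv}. Your identification of the sources of the factors in $c_\vGa$ and of the stacky bookkeeping (the $r_{(e,v)}$ rescaling of $\bar\psi$, the $k_{(e,v)}$-invariants in $\bh(e,v)$, and the rigidified evaluation maps in the fiber product for $\widetilde\cM_\vGa$) matches the paper's derivation precisely.
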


\end{document}